\documentclass{article}
\usepackage[utf8]{inputenc}
\usepackage{geometry}
\usepackage{amsmath}
\usepackage{amssymb}
\usepackage{amsthm}
\usepackage{tikz}
\usetikzlibrary{arrows.meta, positioning, calc, decorations.pathreplacing, shapes.misc}

\usepackage{xeCJK}
\usepackage{ulem}

\usepackage{matlab-prettifier}

\usepackage{mathtools}
\usepackage{diagbox}
\usepackage{float}
\usepackage[title]{appendix}

\usepackage{fancyhdr}
\usepackage{graphicx}
\usepackage{subfig}
\usepackage[colorlinks,            linkcolor=black,            anchorcolor=black,            citecolor=black            ]{hyperref}
\graphicspath{ {./images/} }
\newtheorem{proposition}{Proposition}

\newtheorem{remark}{Remark}

\newcommand{\p}{\partial}

\newcommand{\eps}{\epsilon}

\mathtoolsset{showonlyrefs}

\DeclareMathOperator\init{init}

\numberwithin{equation}{section}

\title{Resolving the Blow-Up: A Time-Dilated Numerical Framework for Multiple Firing Events in Mean-Field Neuronal Networks}

\author{Xu'an Dou\thanks{Beijing International Center for Mathematical Research, Peking University, Beijing, 100871, China (dxa@pku.edu.cn)},\, \, Louis Tao \thanks{Center for Bioinformatics, School of Life Sciences, and Center for Quantitative Biology, Academy for Advanced Interdisciplinary Studies, Peking University, Beijing, 100871, China (taolt@mail.cbi.pku.edu.cn)} \, \, Zhe Xue \thanks{Institute for Theoretical Sciences, Westlake University, Hangzhou, Zhejiang Province, 310030, China } \, \,and\, \,  Zhennan Zhou\thanks{Institute for Theoretical Sciences, Westlake University, Hangzhou, Zhejiang Province, 310030, China (zhouzhennan@westlake.edu.cn).}}

\date{\today}

\begin{document}

\maketitle

\begin{abstract}
In large-scale excitatory neuronal networks, rapid synchronization manifests as {multiple firing events (MFEs)}, mathematically characterized by a finite-time blow-up of the neuronal firing rate in the mean-field Fokker-Planck equation. Standard numerical methods struggle to resolve this singularity due to the divergent boundary flux and the instantaneous nature of the population voltage reset. In this work, we propose a robust {multiscale numerical framework based on time dilation}. By transforming the governing equation into a dilated timescale proportional to the firing activity, we desingularize the blow-up, effectively stretching the instantaneous synchronization event into a resolved mesoscopic process. This approach is shown to be physically consistent with the {microscopic cascade mechanism} underlying MFEs and the system's inherent fragility. To implement this numerically, we develop a hybrid scheme that utilizes a {mesh-independent flux criterion} to switch between timescales and a semi-analytical ``moving Gaussian'' method to accurately evolve the post-blowup Dirac mass. Numerical benchmarks demonstrate that our solver not only captures steady states with high accuracy but also efficiently reproduces periodic MFEs, matching Monte Carlo simulations without the severe time-step restrictions associated with particle cascades.
\end{abstract}

\paragraph{Keywords:} Integrate-and-fire model; Mean-field Fokker-Planck equation; Multiple Firing Events (MFEs); Finite-time blow-up; Time dilation.

\paragraph{Mathematics Subject Classification:} 92C20, 35Q84, 65M08, 35B44

\section{Introduction}

Mathematical modeling of large-scale neuronal networks bridges the stochastic dynamics of individual neurons and the collective emergent behaviors of the brain. Among various approaches, the Integrate-and-Fire (IF) neuron model remains a standard abstraction, favored for its balance between analytical tractability and biological relevance \cite{brunel1999fast, caceres2011analysis}.

In the limit of a large number of neurons $N_p \to \infty$, the discrete interactions of a homogeneous network are effectively captured by macroscopic quantities: the probability density of the membrane potential $p(v,t)$ and the global network firing rate $N(t)$. These are governed by the mean-field Fokker-Planck equation \cite{caceres2011analysis, delarue2015particle}:
\begin{equation}\label{eq:intro-fp}
    \begin{cases}
    \p_{t}p + \p_v \left( \left[ -v + bN(t) \right] p \right) = a \p_{vv}p + N(t)\delta(v - V_R), & v < V_F, \\
    p(t, V_F) = 0, \\
    N(t) = -a \p_v p(t, V_F).
    \end{cases}
\end{equation} 
Here, $a > 0$ represents the diffusion coefficient arising from background noise, and $b$ quantifies the strength of synaptic, network connectivity. In this work, we focus on the case $b>0$, corresponding to an excitatory network. The dynamics of each IF neuron is confined to the region $v < V_F$, where $V_F$ is the firing threshold. Upon reaching this threshold, individual neurons {fire} (or {spike}) and are instantaneously reset to the resting potential $V_R$ (with $V_R < V_F$), a process captured by the source term $N(t)\delta(v - V_R)$ and the boundary condition. 

We view the mean-field equation \eqref{eq:intro-fp} as a macroscopic model at the population level; in contrast, a microscopic model describes the membrane potential of each individual neuron within a network. Macroscopic models have many advantages; however, their applicability is often considered limited. While equation \eqref{eq:intro-fp} successfully captures steady states and mild oscillations \cite{brunel1999fast,brunel2000dynamics}, it is unclear whether it is still a valid description for highly synchronized neuronal activity. Indeed, in the computational neuroscience literature, the Fokker-Planck equation \eqref{eq:intro-fp} is often derived formally by assuming that the number of neurons is large and that their activities are ``uncorrelated''. In an apparent contrast, during a synchronization event,  a non-trivial fraction of the neuronal population spikes within an infinitesimal time window. Thus, it remains a puzzle whether traditional macroscopic approaches are still valid in this regime.

Scientifically, such synchronization---often referred to as a \textit{multiple firing event} (MFE)---is fundamental to biological processes, including rhythmic motor control and sensory information processing \cite{rangan2013emergent}. However, capturing this emergent behavior poses a severe challenge for computational modeling. The traditional Fokker-Planck approach \eqref{eq:intro-fp} relies heavily on the assumption of uncorrelated neuronal activity, an assumption that is commonly believed to fail during the instantaneous cascades \cite{rangan2013dynamics, zhang2014distribution}. Consequently, the breakdown of standard macroscopic descriptions in this synchronized regime has motivated a wealth of alternative computational methods and theoretical investigations. These widespread efforts range from direct stochastic particle simulations \cite{CiCP-30-820} to specialized coarse-grained network models \cite{zhang2014coarse}.

Mathematically, a remarkable property of \eqref{eq:intro-fp} is that the firing rate $N(t)$ can blow up in finite time, as first discovered in \cite{caceres2011analysis}, and further developed in \cite{Antonio_Carrillo_2015,roux2021towards}. Unlike the interior mass-concentration blow-ups in, e.g., Keller-Segel systems, the singularity here is in the boundary flux at the firing threshold $V_F$. As the population density accumulates near $V_F$, the spatial derivative $-\p_vp(t,V_F)$ diverges, and the Dirichlet boundary condition $p(t, V_F) = 0$ might no longer hold \cite{munoz2025free}. The blow-up of $N(t)$ turns out to be closely related to the synchronization. However, it is established in \cite{carrillo2013CPDEclassical} that the classical solutions of \eqref{eq:intro-fp} exists if and only if $N(t)$ remains bounded. Thus, to describe synchronization within this framework, we need to define \textit{generalized solutions} of \eqref{eq:intro-fp} that allows the blow-up of $N(t)$ as part of the dynamics, instead of its cessation.

The first generalized solution of \eqref{eq:intro-fp} is proposed in \cite{delarue2015particle} from the probabilistic perspective, based on the corresponding McKean-Vlasov SDE of \eqref{eq:intro-fp}. It is also established in \cite{delarue2015particle} a rigorous limit from a microscopic particle system of neurons to the macroscopic mean-field description, in which we clearly see the correspondence between the blow-up of $N(t)$ and the synchronization. The generalized solution involves a criterion for the synchronization size, to which a formula of similar spirits also appear in the computational work \cite{zhang2014distribution}. Notably, similar mathematical mechanisms appear in the modeling of other disciplines: the systemic risk in finance and the instant freezing of the supercooled water.  There are many subsequent developments following \cite{delarue2015particle}  both in theoretical analysis \cite{2019-AAP1403,hambly2019mckean,2020BLMSLedgerAndreas,delarue2022global,bayraktar2024mckean,chu2025supercooled} and in numerical analysis \cite{cuchiero2024implicit,kaushansky2019simulation,lipton2021semi,kaushansky2023convergence}. Those works are mostly focusing on probabilistic approaches and devoted to the aforementioned models from finance and physics, which are simpler than \eqref{eq:intro-fp} in the sense that there is no reset mechanism. Indeed, the reset term is a unique difficulty for models in neuroscience. 

From the PDE perspective, generalized solutions have been proposed by introducing a new timescale -- the dilated timescale, which relates to the firing rate $N(t)$ as follows
\begin{equation}\label{def:tau-intro}
    d\tau =N(t)dt.
\end{equation} The blow-up of $N(t)$ can be interpreted as a Dirac mass in time in the original timescale. Thus, by \eqref{def:tau-intro} a single blow-up time in $t$ can be dilated into an interval in the new timescale $\tau$. In this way, the singularity in $t$ can be resolved by a regular dynamics in $\tau$. This generalization idea is proposed independently in \cite{dou2022dilating} and \cite{taillefumier2022characterization,sadun2022global} for different neuroscience models, and is further developed in \cite{dou2024noisy,papadopoulos2025physical}, see also \cite{carrillo2024classical}. 

To summarize, on the one hand, there is an increasing interest in computational neuroscience regarding synchronization and MFEs, a regime where the traditional Fokker-Planck approach fails. On the other hand, it is increasingly clear in the theoretical literature that synchronization corresponds to the blow-up of $N(t)$; while classical solutions cannot resolve this singularity, generalized solutions (which might be constructed via time-dilation) can. Consequently, a nontrivial gap emerges between theoretical analysis and computational practice.

In this work, we bridge this gap by developing the first multiscale numerical framework for \eqref{eq:intro-fp} in which the blow-ups are resolved physically by the time-dilation approach \eqref{def:tau-intro}. This serves as a first step toward the long-term goal of systematically integrating the time-dilation idea into practical computation tools for more complicated neuronal networks. Our approach resolves the time singularity around synchronization by introducing a more regular dynamics in the $\tau$ timescale. This framework acts as a mesoscopic bridge that seamlessly connects with the macroscopic model \eqref{eq:intro-fp}, allowing us to resolve MFEs without retreating to microscopic particle systems (in contrast to \cite{CiCP-30-820,DXZ2024}).

To build our framework, our first step is to theoretically clarify the \textit{physical} continuation of \eqref{eq:intro-fp} after a blow-up. Specifically, the generalized solution shall be consistent with an underlying microscopic particle system that is a faithful representation of realistic neuronal dynamics. To achieve this, we explicitly incorporate the effect of an (infinitesimal) refractory period \cite{deville2008synchrony,caceres2014beyond,petronilia2024contagious}. Incorporating this mechanism precludes the unphysical phenomenon of eternal blow-up \cite{dou2022dilating,dou2024noisy}, enables the continuation of the solution even in the strongly excitatory regime ($b \gg 1$), and gives rise to rich dynamics characterized by periodic blow-up solutions. However, it also introduces an additional level of discontinuity that poses theoretical challenges, partially explaining its omission in several prior studies \cite{delarue2015particle,dou2022dilating,dou2024noisy}. We note that generalized solutions incorporating refractory periods have been studied in \cite{taillefumier2022characterization,sadun2022global,papadopoulos2025physical} for a closely related but different macroscopic model.

To unify our understanding of synchronization, we then examine the resolution of the macroscopic blow-up through three interrelated multiscale perspectives: the microscopic particle system, the mesoscopic time-dilation dynamics in the $\tau$ timescale, and the fragility criteria \cite{hambly2019mckean}. Although these processes operate from distinct conceptual standpoints, we demonstrate that they are theoretically equivalent in resolving the singularity. This unified viewpoint not only guarantees the physical and mathematical validity of our approach but also naturally highlights the time-dilation formulation as a convenient platform for developing numerical tools.

Building upon this theoretical foundation, we propose an adaptive multiscale numerical framework. The solver dynamically couples the macroscopic PDE evolution with the mesoscopic resolution of MFEs. To reliably detect the onset of synchronization ($N=\infty$) during the computation, we establish a novel criterion based on the loss of the Dirichlet boundary condition \cite{munoz2025free}. This intrinsic, mesh-independent formulation avoids the empirical thresholds in previous works \cite{DXZ2024}. Moreover, we rigorously establish the theoretical properties necessary to ensure that the transitions between the two timescales are mathematically consistent. Furthermore, the incorporation of the refractory period induces a post-blowup singularity: the reset population emerges as a Dirac mass, which is incompatible with a fixed-grid discretization. To resolve this, we introduce a semi-analytical ``moving Gaussian'' scheme. This localized treatment allows the newly reset population to evolve according to an effective dynamics until it is sufficiently regularized, at which point it can be accurately projected back onto the Eulerian macroscopic grid.

To validate the effectiveness of the proposed framework, we conduct a series of numerical experiments across different dynamical regimes. Beyond verifying the solver against classical steady states and single blow-up scenarios, we demonstrate its capability to capture complex, long-time periodic synchronization patterns. A highlight of our approach is its superior computational efficiency compared to direct microscopic particle simulations. While a faithful simulation of the $N_p$-particle system typically necessitates a restrictive time step significantly smaller than $1/N_p$ to resolve the fine-grained interactions during a cascade, our mean-field solver requires only moderately small discretization parameters to accurately capture the onset and size of synchronization events. This robustness stems from the fact that the time-dilation reformulates the singular event into a regularized, trackable process, effectively bypassing the severe time-step constraints inherent in the stiff, microscopic dynamics. Our results show that the mean-field model, when equipped with this numerical continuation, serves as a highly efficient proxy for the collective, synchronizing behavior of large-scale neuronal populations.

The remainder of this paper is organized as follows. In Section \ref{sec:theory}, we present the theoretical multiscale framework and provide the unifying interpretation of the three different perspectives on MFEs. In Section \ref{sec:algorithm}, we detail the numerical algorithms, focusing on the transition between timescales, and the semi-analytical handling of the post-blowup profile. Finally, Section \ref{sec:experiments} is devoted to a comprehensive set of numerical experiments, including convergence studies and the exploration of diverse dynamical regimes.

\section{Resolving Blow-ups Physically: a Multiscale Framework}\label{sec:theory}

In this section, we present a theoretical framework for solving the mean-field Fokker-Planck equation \eqref{eq:intro-fp} in the presence of blow-ups. For convenience we recall the equation here 
\begin{align}\label{eq:fp-classical}
    \p_{t}p + \p_v([-v+bN(t)]p) &= a\p_{vv}p + N(t)\delta_{v=V_R},\qquad &t>0, v<V_F, \\
    p(t,V_F) &= 0,\qquad &t>0, \label{zerobc-classical} \\
    N(t) &= -a\p_vp(t,V_F)\geq 0,\qquad &t>0, \label{def-N-classical} \\ 
    p(t=0,v) &= p_{\init}(v),\qquad &v\leq V_F. \label{ic:fp-classical}
\end{align} 

We focus on the excitatory case $b>0$ as the solution can blow up in this scenario \cite{caceres2011analysis,roux2021towards}. In the inhibitory case $b<0$, the system is known to be globally well-posed \cite{carrillo2013CPDEclassical,Antonio_Carrillo_2015}.

\subsection{A General Framework}

Suppose the firing rate blows up at time $t$, i.e., $N(t^-)=+\infty$, the classical solution of the PDE \eqref{eq:intro-fp} stops to exist \cite{carrillo2013CPDEclassical}. At the microscopic level this means the neuronal network synchronizes at $t$. To physically continue the dynamics, we resolve the singularity according to the following three-step procedure:
\begin{enumerate}
    \item Determine a \textit{blow-up size} $\Delta m$ according to the \textit{pre-blowup profile} $p(t^-)$. The blow-up size shall physically correspond to the synchronization size, which is the proportion of the neuron population that spike synchronously at $t$. 
    \item Obtain the physical \textit{post-blowup profile} according to $\Delta m$ and $p(t^-)$
    \begin{equation}\label{post-blowup-calculation-0}
        p(t^+)=S(\Delta m,p(t^-)).
    \end{equation} 
    
        
    \item Restart the classical dynamics \eqref{eq:fp-classical} again from the post-blowup profile obtained in \eqref{post-blowup-calculation-0}.
\end{enumerate}

We give two remarks before specifying how to obtain the blow-up size and the post-blowup profile.

First, this local resolution strategy naturally extends to a global-in-time description.
There can be a finite or infinite sequence of blow-up times $0<t_1<t_2<...<t_n<...$ such that
\begin{enumerate}
    \item At each blow-up time $t_i$, the blow-up is resolved according to the above procedure. In particular, we obtain the blow-up size and the post-blowup profile accordingly.
    \item During the \textit{inter-blowup intervals} $(0,t_1)\cup (t_1,t_2)...$, the solution solves the classical dynamics \eqref{eq:fp-classical}. In particular, the initial data at $t_i^+$ is the post-blowup profile obtained through \eqref{post-blowup-calculation-0}.
\end{enumerate}

Second, this discontinuous transition from $p(t^-)$ to $p(t^+)$ in the physical time $t$ can be realized as a continuous dynamic process in a stretched coordinate system. We introduce the \textit{dilated timescale} $\tau$, defined by the differential relation:
\begin{equation}\label{def-tau-sec2-1}
    d\tau= N(t)dt.
\end{equation} The blow-up time $t$ is mapped into a time interval $(\tau_a,\tau_b)$ in the new timescale, whose length is exactly the blow-up size $\tau_b-\tau_a=\Delta m$. By solving the dynamics in $\tau$ on that interval, we can obtain the post-blowup profile \eqref{post-blowup-calculation-0}. The details are given in  Section \ref{subsubsec:td-inter-a1=0}, see also Figure \ref{fig:time_dilation_mechanism}.

\begin{figure}[htbp]
    \centering
    \begin{tikzpicture}[>=Stealth, thick]

        \begin{scope}[local bounding box=globalView]
            \draw[->] (-0.2,0) -- (4.0,0) node[right] {$t$};
            \draw[->] (0,-0.2) -- (0,4.0) node[above] {$\tau$};
            
            \draw[blue!70!black, line width=1.2pt] (0,0) .. controls (1,0.5) and (1.5,1) .. (2,2) coordinate (jumpStart);
            
            \draw[red, line width=1.5pt] (2,2) -- (2,3.5) coordinate (jumpEnd);
            
            \draw[blue!70!black, line width=1.2pt] (2,3.5) .. controls (2.5,3.8) and (3.5,4) .. (4,4.2);
            
            \draw[dashed, thin, gray] (2,0) -- (jumpStart);
            \node[below] at (2,0) {$t^*$};
            
            \draw[decoration={brace,amplitude=4pt,raise=2pt},decorate, red] (2,2) -- (2,3.5) 
                node[midway, right=6pt, red] {$\Delta m$};
            
            \node[circle, draw=gray, dashed, minimum size=1.8cm, inner sep=0pt] (magnify) at (2,2.75) {};
        \end{scope}

        \begin{scope}[shift={(6.5,0.5)}, local bounding box=localView]
            
            \draw[gray, dashed, ultra thin] (magnify.north east) -- (-0.5, 3.2);
            \draw[gray, dashed, ultra thin] (magnify.south east) -- (-0.5, -0.5);

            \draw[->] (-0.5, 0) -- (4.5, 0) node[right] {$v$};
            \draw[->] (0, -0.2) -- (0, 3.0) node[above] {$\tilde{p}$};
            
            \draw[thick] (1.0, 2pt) -- (1.0, -2pt) node[below] {$V_R$};
            \draw[thick] (3.5, 2pt) -- (3.5, -2pt) node[below] {$V_F$};
            
            \fill[blue!10] (1.2, 0) .. controls (1.5, 2.0) and (2.8, 2.0) .. (3.5, 1.2) -- (3.5, 0) -- cycle;
            \draw[blue!80!black, thick] (1.2, 0) .. controls (1.5, 2.0) and (2.8, 2.0) .. (3.5, 1.2);
            
            \draw[->, blue!80!black, thick] (2.5, 1.0) -- (3.2, 1.0); 

            \draw[red, dashed, line width=1.2pt, ->] (3.5, 1.2) .. controls (3.8, 2.5) and (1.0, 2.5) .. (1.0, 1.8);
            \node[red, above, font=\scriptsize] at (2.25, 2.3) {Reset Flux};
            
            \draw[red, line width=1.5pt, ->] (1.0, 0) -- (1.0, 1.5);
            \node[red, right] at (1.0, 0.8) {$\delta$};

        \end{scope}

    \end{tikzpicture}
    \caption{Schematic of the time-dilation framework and the blow-up resolution mechanism. \textbf{Left:} The global time mapping, where the singularity at $t^*$ (infinite slope) is unfolded into a finite jump $\Delta m$ in the dilated time $\tau$. \textbf{Right:} The mesoscopic dynamics in the voltage domain $v$. Within the dilated interval, the probability density $\tilde{p}$ (blue) is transported across the threshold $V_F$. Crucially, at the onset of blow-up, the density at the boundary satisfies $\tilde{p}(V_F) > 0$. The mass flux exiting $V_F$ is instantaneously re-injected at the reset potential $V_R$ (red dashed path), accumulating as a Dirac delta function.}
    \label{fig:time_dilation_mechanism}
\end{figure}
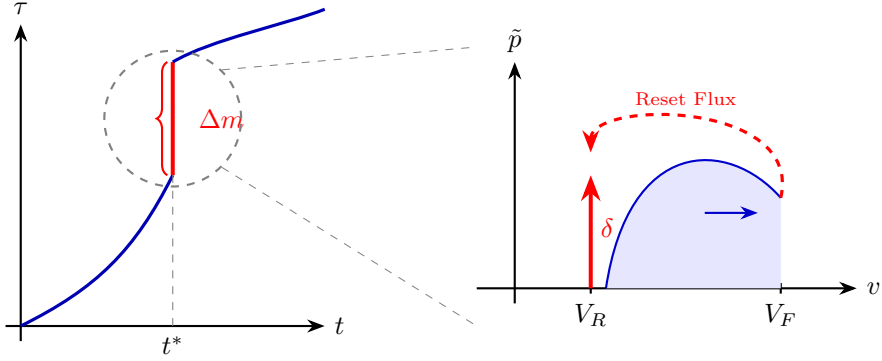

\subsection{Blow-up Size and Post-blowup Profile} 
\label{sec:postblowup}

Having established the general framework, we now specify the blow-up size $\Delta m$ and the post-blowup profile $p(t^+)$.
We introduce the following notation for the pre-blowup profile

\begin{equation}\label{def-p0-mu}
p_0(v):=p(t^-,v),\quad v\leq V_F,\qquad\quad\mu([a,b]):=\int_{a}^{b}p_0(v)dv,\quad -\infty\leq a<b\leq V_F.
\end{equation} 
 For simplicity, we always assume the pre-blowup profile is a probability density function. This is a natural assumption in view of the regularizing effect of \eqref{eq:fp-classical} thanks to the diffusion. Even if a Dirac mass at $V_R$ is generated in the post-blowup profile, it will be regularized instantly.




\paragraph{1. Blow-up size} Using the notation \eqref{def-p0-mu}, the blow-up size is given by
\begin{equation}\label{blowup-size-a1=0}
    \Delta m :=\inf \{x>0: \mu([V_F-bx,V_F])<x\}.
\end{equation}
Physically, this formula identifies the \textit{stopping point} of the cascade. The term $\mu([V_F-bx,V_F])$ represents the fraction of neurons potentially recruited by a collective voltage jump of size $bx$. The definition \eqref{blowup-size-a1=0} implies that the cascade proceeds as long as the recruited population is sufficient to sustain the voltage jump, and terminates at the smallest $x$ where the recruited mass falls short of $x$. This condition aligns with the cascade criteria established in probabilistic studies \cite{delarue2015particle} and computational neuroscience \cite{zhang2014distribution}.


\paragraph{2. Post-Blowup Profile}

The post-blowup profile consists of two terms
\begin{align}
        p(t^+)&=S(\Delta m,p(t^-))\\
              &=\Delta m\delta_{v=V_R}+S_{\Delta m}^rp(t^-),\label{post-blowup-calculation}
\end{align} which correspond to the proportion of neurons that spike at $t$ or not.
    \begin{enumerate}
        \item The first term $\Delta m\delta_{v=V_R}$ represents the reset of those neurons that spike at the blow-up time $t$. Here, an infinitesimal refractory period is taken into account.
        \item  The second term $S_{\Delta m}^rp(t^-)$ denotes how the remaining part of the population (which does not spike during the process) is redistributed by this event. We use the semigroup notation $S_{\Delta m}^r$ as it is indeed a linear semigroup operator indexed by the blow-up size $\Delta m>0$.
        
\end{enumerate}

We stress that in \eqref{post-blowup-calculation} an infinitesimal refractory period has been taken into account, leading to the Dirac mass at $V_R$. Physically, after spiking, a neuron enters a refractory period, during which it will not be influenced by the spikes of other neurons. An infinitesimal refractory period does not affect the PDE solution if the firing rate remains bounded. However, it makes a difference when $N$ blows up, as the blow-up means a fraction of the population spike during the same time $t=(t^-,t^+)$. The infinitesimal refractory period will hold those neurons at $V_R$ during the blow-up, leading to the formation of a Dirac mass at $V_R$.

When the firing neurons, constituting a total mass of $\Delta m$, are reset to $V_R$,
the remaining neurons, on the other hand, receive the total synaptic input $b\Delta m$ generated by the event, resulting in a uniform shift of their voltage profile.
Thus we define
\begin{equation}\label{blowup-operator-a1=0}
    (S_{\Delta m}^r p_0)(v):=p_0(v-b\Delta m),\qquad v\leq V_F.
\end{equation}
To summarize, the post-blowup profile is given by
\begin{equation}\label{post-blowup-full}
    p(t^+,v)=\Delta m\delta_{v=V_R}+p(t^-,v-b\Delta m),\qquad v\leq V_F.
\end{equation}
\begin{remark}
If the refractory period is not taken into account as in the previous works \cite{delarue2015particle,dou2022dilating,dou2024noisy}, the Dirac mass term in \eqref{post-blowup-full} will be replaced by a more regular term. For instance, in \cite{dou2024noisy} the corresponding post-blowup profile is 
\begin{equation}
        p(t^+,v)=\mathbb{I}(V_R\leq v\leq V_R+\Delta m)+p(t^-,v-b\Delta m),\qquad v\leq V_F.
\end{equation} The term $\mathbb{I}(V_R\leq v\leq V_R+\Delta m)$ is more regular, but less physical. 

The refractory period has been taken into accounts in the series of works \cite{taillefumier2022characterization,sadun2022global,papadopoulos2025physical} for a related but different model. In particular, in their models there is a firing-rate-dependent diffusion term $N(t)\p_{vv}p$. 
To our knowledge, we are the first to consider generalized solutions including the infinitesimal refractory period for the model \eqref{eq:fp-classical}. As our primary motivation is computation, we do not address the theoretical issues like the global existence of such generalized solutions, which has been addressed for less physical generalized solutions of the same model as in \cite{delarue2015particle,dou2024noisy}.

\end{remark}

We conclude this part by pointing out some properties of the post-blowup formulation \eqref{blowup-size-a1=0}-\eqref{post-blowup-full}. 

\begin{remark}[Direct properties of $\Delta m$]The definition \eqref{blowup-size-a1=0} implies 
\begin{equation}\label{jump-size-less-threshold}
    \mu([V_F-b x,V_F])\geq x,\qquad 0\leq x\leq \Delta m,
\end{equation} and also that for every $\epsilon>0$, there exists $ x\in (\Delta m, \Delta m+\epsilon)$ such that
\begin{equation}
        \mu([V_F-b x,V_F])< x.
\end{equation}
Moreover, {thanks to continuity} of $\mu$ we have an equality 
\begin{equation}\label{m-equality-bound}
   \mu([V_F-b\Delta m,V_F])=\Delta m,
\end{equation} which implies $0\leq \Delta m\leq 1$. 
\end{remark}
\begin{remark}[Mass Conservation] Thanks to \eqref{blowup-operator-a1=0} and \eqref{m-equality-bound} we derive
    \begin{align}\label{tmp-sec2-1}
                \int_{-\infty}^{V_F}(S_{\Delta m}^r p_0)(v)dv=\int_{-\infty}^{V_F-b\Delta m}p_0(v)dv=1-\mu([V_F-b\Delta m,V_F])=1-\Delta m.
    \end{align}
    Hence the post-blowup profile \eqref{post-blowup-calculation-0} is of mass one
    \begin{equation}
    \int_{-\infty}^{V_F}p(t^+,v)dv=   \int_{-\infty}^{V_F}\big((S^r_{\Delta m} p_0)(v)+\Delta m\delta_{v=V_R}\big)dv=1.
    \end{equation}
\end{remark}
\begin{remark}
    In terms of $S^r_{\Delta m}$ \eqref{blowup-operator-a1=0} and in view of \eqref{tmp-sec2-1}, we can rewrite \eqref{blowup-size-a1=0} as
    \begin{equation}
            \Delta m :=\inf \left \{x>0: 1-\int_{-\infty}^{V_F}(S^r_{x}p_0)(v)dv<x \right \}.
    \end{equation}
\end{remark}

\subsection{Microscopic/Mesoscopic Interpretations}

The definitions of the blow-up size $\Delta m$ and the post-blowup profile derived in Section \ref{sec:postblowup} are not arbitrary mathematical constructs; they are grounded in the physical reality of the neuronal network. In this section, we validate these definitions by interpreting them through three complementary lenses: the microscopic particle system (Section \ref{subsubsection:ps-inter}), the time-dilation dynamics (Section \ref{subsubsec:td-inter-a1=0}), and the probabilistic fragility criterion (Section \ref{subsubsection:fra-inter}).

These three approaches are unified under a multiscale perspective (Section \ref{subsubsec:rmk-a1=0}). While the macroscopic firing rate $N(t)$ diverges at the singularity, each of these perspectives resolves the blow-up by unfolding the dynamics onto a finer scale—whether it be the index of particles in a cascade, a stretched time coordinate, or the iteration of a mapping.

\subsubsection{Particle System Interpretation}\label{subsubsection:ps-inter}

We begin by establishing the microscopic foundation using a finite network of $N_p$ neurons. This particle model serves as the ground truth, from which the mean-field description emerges in the limit $N_p \to \infty$. The following formulation is adapted from the rigorous probabilistic frameworks in \cite{delarue2015particle} and the computational work \cite{zhang2014distribution}  (see also \cite{CiCP-30-820,DXZ2024}).

Consider a network of $N_p$ neurons, characterized by their membrane potentials $V^i_t$ ($1\leq i\leq N_p$, $t\geq0$). 
In the sub-threshold regime (where all $V_t^i < V_F$), the voltages evolve according to the coupled stochastic differential equations (SDEs)
\begin{equation}\label{SDE-OU}
    dV_t^i=-V_t^idt+\sqrt{2a}dB_t^i,\qquad 1\leq i\leq N_p,
\end{equation}
where $B_t^i$ are independent standard Brownian motions.
The interaction occurs strictly through the firing mechanism. When a neuron $V_t^i$ reaches the threshold $V_F$, it fires (spikes) and is instantaneously reset to $V_R$. Crucially, this spike delivers an excitatory kick of size $\alpha_{N_p}$ to all other neurons in the network
\begin{equation}\label{simple-spike}
    V_{t^-}^i=V_F\quad \Rightarrow \quad \begin{dcases}
        V_{t^+}^i=V_R,\\
        V_{t^+}^j=V_{t^-}^j+\alpha_{N_p},\quad j\neq i.
    \end{dcases}
\end{equation}
We adopt the standard mean-field scaling $\alpha_{N_p}=\frac{b}{N_p}$, which ensures the total interaction remains finite as the system size grows.

In the limit $N_p \to \infty$, this system converges to the solution $(p, N)$ of the Fokker-Planck equation \eqref{eq:fp-classical}-\eqref{def-N-classical} \cite{caceres2011analysis, delarue2015particle}, where the macroscopic firing rate $N(t)$ represents the normalized flux of spikes
\begin{equation}\label{formal-intuition}
    N(t){d}t = \lim_{N_p\rightarrow\infty} \frac{\text{Number of spikes during $[t,t+{d}t ]$}}{N_p}.
\end{equation}
In the classical regime, spike times do not concentrate in time, and $N(t)$ remains finite. However, the blow-up $N(t) \to \infty$ signifies a breakdown of the continuous description, corresponding physically to a \textit{multiple firing event (MFE)} or synchronization, where a macroscopic fraction of the population spikes within the same infinitesimal instant.

\paragraph{The Cascade Mechanism}
When a spike occurs (via \eqref{simple-spike}), the resulting voltage jump $\alpha_{N_p}$ may push other neurons across the threshold $V_F$, triggering further spikes and further jumps. This positive feedback loop creates an instantaneous avalanche.
To resolve this MFE, we describe the \textit{cascade dynamics} within the infinitesimal interval $(t^-, t^+)$. We drop the time index $t$ and denote the indices of the population as $[1 : N_p]:=\{1,2,...,N_p\}$.

\begin{itemize}
    \item \textbf{Initialization:} Given the pre-blowup configuration $\{V^1,...,V^{N_p}\}$, initialize the cascade counter $k=0$ and the set of fired neurons $\Gamma=\emptyset$.
    \item \textbf{Cascade Loop:} While there exists a neuron $i\in [1 : N_p]\backslash\Gamma$ such that $V^i\geq V_F$:
    \begin{enumerate}
        \item Identify all such neurons and denote their count by $\Delta k$.
        \item Add these indices $i$ to the fired set $\Gamma$.
        \item Apply the interaction: For all surviving neurons $j\in[1 : N_p]\backslash \Gamma$, update their voltage $V^j \leftarrow V^j+\alpha_{N_p}\Delta k$.
        \item Update the total spike count $k \leftarrow k+\Delta k$, and repeat.
    \end{enumerate}
    \item \textbf{Termination and Reset:} Once the cascade stabilizes (no new neurons cross $V_F$), set $V^i=V_R$ for all $i\in\Gamma$ (the refractory reset). The remaining neurons retain their shifted potentials.
\end{itemize}

This iterative procedure determines the set $\Gamma$ of synchronized neurons and the total MFE size $k = |\Gamma|$. We now explicitly connect this discrete mechanism to the macroscopic quantities defined in Section 2.2.

For the blow-up size, let $\mu_{N_p}$ be the empirical measure of the voltage configuration:
\begin{equation}\label{def-muN}
    \mu_{N_p}(A):=\int_{A}\left(\frac{1}{N_p}\sum_{i=1}^{N_p}\delta_{v=V^i}\right)dv,\qquad \text{for any interval $A\subseteq(-\infty,V_F]$}. 
\end{equation}
The following proposition establishes that the normalized cascade size $k/N_p$ is the discrete analog of the continuous blow-up size $\Delta m$ \eqref{blowup-size-a1=0}.

\begin{proposition}[See also {\cite[Proposition 3.4]{delarue2015particle} } or {\cite[Section 3.1]{zhang2014distribution}}]\label{prop:MFE}
    With $\alpha_{N_p}=\frac{b}{N_p}$, the MFE size $k$ resulting from the cascade process satisfies
\begin{equation}\label{particle-blowupsize-cond-2}
    \frac{k}{N_p}=\inf\left\{\frac{\ell}{N_p}:\,\ell\in[1..\,N_p],\quad\mu_{N_p}\left(\left[V_F-b\frac{\ell}{N_p},V_F\right]\right)<\frac{\ell+1}{N_p}\right\}.
\end{equation}
\end{proposition}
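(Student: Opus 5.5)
The plan is to recast the cascade loop as the iteration of a monotone integer-valued map and identify its terminal value with the least fixed point. Throughout, $\{V^1,\dots,V^{N_p}\}$ is the \emph{pre-blowup} configuration and $\mu_{N_p}$ is its empirical measure \eqref{def-muN}. Define
\begin{equation}
f(\ell):=N_p\,\mu_{N_p}\!\left(\left[V_F-b\tfrac{\ell}{N_p},V_F\right]\right)=\#\left\{i:\ V^i\geq V_F-b\tfrac{\ell}{N_p}\right\},\qquad \ell\in\{0,1,\dots,N_p\}.
\end{equation}
The map $f$ is nondecreasing, integer valued, and satisfies $f(\ell)\le N_p$. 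Since an MFE is being triggered, I take $f(0)\geq 1$, i.e.\ at least one neuron sits at $V_F$; this is the standing assumption of the cascade loop. The condition in \eqref{particle-blowupsize-cond-2} reads $f(\ell)<\ell+1$, which for integers is the same as $f(\ell)\le \ell$.

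\textbf{Step 1 (key invariant).} I will show by induction over the loop iterations that, after the spike counter has reached value $k_j$, two facts hold. First, every surviving neuron $i\notin\Gamma$ has current voltage $V^i+b k_j/N_p$, because each spike adds $\alpha_{N_p}=b/N_p$ to every survivor and nothing else changes during the cascade. Second, $\Gamma=\{i:\ V^i\geq V_F-bk_{j-1}/N_p\}$, with the convention $k_{-1}=0$. From these it follows that at the next pass the newly crossing neurons are exactly those with $V_F-bk_{j-1}/N_p> V^i\geq V_F-bk_j/N_p$. Hence the counter obeys the recursion
\begin{equation}
k_0=f(0),\qquad k_{j+1}=f(k_j).
\end{equation}
The loop stops exactly when no new neuron crosses, i.e.\ when $f(k_j)=k_j$.

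\textbf{Step 2 (least fixed point).} Because $f$ is nondecreasing and $k_0=f(0)\ge 0$, the sequence $(k_j)$ is nondecreasing by induction. It is also bounded by $N_p$, so it stabilizes at some $k$ with $f(k)=k$; in particular $f(k)\le k$ and $k\geq f(0)\geq1$. For minimality, let $\ell\geq 1$ be any integer with $f(\ell)\le \ell$. Then $k_0=f(0)\le f(\ell)\le \ell$, and if $k_j\le\ell$ then $k_{j+1}=f(k_j)\le f(\ell)\le\ell$. Therefore $k\le \ell$. Combining the two facts, $k=\min\{\ell\in[1..N_p]:\ f(\ell)\le\ell\}$, which is \eqref{particle-blowupsize-cond-2} after dividing by $N_p$. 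The admissible set is nonempty because $f(N_p)\le N_p$.

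\textbf{Main obstacle.} The only delicate point is Step 1. One must check carefully that counting via the \emph{original} configuration in $\mu_{N_p}$ agrees with the loop, which acts on \emph{updated} voltages. This is where the closed interval and the "$\geq V_F$" convention must match, and where one must confirm that already-fired neurons are neither shifted again nor recounted. Fired neurons are removed from the update set, and their original voltages still lie in the interval, so they are correctly included in $f$; the refractory reset happens only at termination and does not affect the count. A secondary point is the trivial case $f(0)=0$, where no MFE occurs; I will simply exclude it by the triggering assumption.
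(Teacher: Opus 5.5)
Your proof is correct, and it takes a somewhat different and more complete route than the paper's. The paper reasons along the ``cascade time axis'': it imagines the cascade after $\ell$ spikes, says it continues iff $\#\{i:\ V^i\ge V_F-\alpha_{N_p}\ell\}\ge \ell+1$, and declares that it stops at the first $\ell$ where this fails. This one-spike-at-a-time picture mirrors the time-dilation dynamics, where $\tau$ counts spikes and the event ends when the backlog $M$ is exhausted. However, it does not address the fact that the loop as written fires neurons in batches, so the counter can jump over intermediate sizes.

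You model the batched loop directly. Your invariant justifies counting with the \emph{original} configuration. The monotonicity step $k_j\le \ell\Rightarrow k_{j+1}=f(k_j)\le f(\ell)\le \ell$ is exactly what shows a batch can never overshoot the smallest $\ell$ with $f(\ell)\le \ell$; this is the content left implicit in the paper's ``precisely when''. Your recursion $x_{j+1}=\mu_{N_p}([V_F-bx_j,V_F])$ with $x_0=\mu_{N_p}(\{V_F\})$ is the fragility iteration of Section~\ref{subsubsection:fra-inter} with $\epsilon=0$. There the stimulus $\epsilon$ supplies the seed; here the triggering atom at $V_F$ does. So the two proofs are particle-level counterparts of the fragility and time-dilation viewpoints, respectively.

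You also make explicit the hypothesis $f(0)\ge 1$. Without it, $k=0$ while the infimum over $\ell\ge 1$ is positive, so the formula would fail. The equality in your definition of $f$ silently uses that all pre-blowup voltages lie in $(-\infty,V_F]$; the paper's final rewriting step relies on the same assumption.
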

\begin{proof}
    Consider a hypothetical cascade of size $\ell$. The number of neurons that would be recruited by the resulting potential jump $b \frac{\ell}{N_p}$ is given by
\begin{equation}
    \# \{i:V^i\geq V_F-\alpha_{N_p}\ell\}.
\end{equation} 
For the cascade to reach size $\ell$ and \textit{continue}, the number of recruited neurons must exceed $\ell$. Conversely, the cascade \textit{stops} at size $k$ precisely when the system fails to recruit the $(k+1)$-th neuron. Thus, the final size $k$ is the smallest integer satisfying the stopping condition
\begin{equation}\label{particle-blowupsize-cond}
    k=\inf\{\ell\in[1:\,N_p]:\quad\# \{i:\,V^i\geq V_F-\alpha_{N_p}\ell\}<\ell+1\}.
\end{equation}
Rewriting the count in terms of the empirical measure $\mu_{N_p}$ yields \eqref{particle-blowupsize-cond-2}.
\end{proof} 

Finally, we examine the post-MFE profile. After the cascade terminates, the voltage configuration becomes:
 \begin{equation}
     V_{t^+}^i=\begin{dcases}
         V_R,\qquad &i\in\Gamma,\\
        V_{t^-}^i+{k}\alpha_{N_p},\qquad &i\in [1..\,N_p]\backslash\Gamma.
    \end{dcases}
 \end{equation} 
The first case ($i \in \Gamma$) corresponds to the Dirac mass at $V_R$ formed by the synchronized neurons. The second case corresponds to the surviving neurons, which have been shifted by $k \alpha_{N_p} = b (k/N_p)$. In the mean-field limit $N_p \to \infty$, where $k/N_p \to \Delta m$, this recovers exactly the structure of the generalized solution \eqref{post-blowup-full}: a Dirac mass of weight $\Delta m$ at $V_R$ plus the pre-blowup density translated by $b\Delta m$.

\subsubsection{Time-Dilation Interpretation}\label{subsubsec:td-inter-a1=0}

We now turn to the time-dilation perspective, which provides a continuous dynamical link between the pre- and post-blowup states. The central difficulty of the MFE is that the macroscopic clock $t$ effectively freezes relative to the fast cascading activity. To resolve this, we adopt the \textit{time-dilation} transformation, utilizing the network activity to rescale time.

A natural choice for the dilated timescale $\tau$ is defined by the relation:
\begin{equation}\label{dtau=Ndt}
    d\tau=N(t)dt.
\end{equation} 
Alternatively, one may define $d\tau=(N(t)+1)dt$ to avoid degeneracy when the firing rate vanishes ($N=0$). However, since we are strictly concerned with the blow-up regime where $N(t) \to \infty$, the two definitions are asymptotically equivalent. We adopt \eqref{dtau=Ndt} here for simplicity.

This transformation maps the instantaneous blow-up at $t=(t^-,t^+)$ into a mesoscopic interval $(\tau_1,\tau_2)$ in the dilated timescale. Without loss of generality, we set $\tau_1=0$ and denote the blow-up duration as $\Delta\tau:=\tau_2-\tau_1$.

The evolution of the probability density $\tilde{p}(\tau, v)$ during this interval is derived by analyzing the asymptotic limit of the Fokker-Planck equation \eqref{eq:fp-classical}. Dividing the equation by $N(t)$, we observe the following scaling behaviors as $N(t) \to \infty$:
\begin{itemize}
    \item The time derivative scales as $\frac{\p_t p}{N(t)} = \p_\tau \tilde{p}$.
    \item The finite drift and diffusion terms, $\frac{1}{N}\p_v(-vp)$ and $\frac{a}{N}\p_{vv}p$, are of order $O(1/N)$ and thus vanish in the limit.
    \item The interaction term scales as $\frac{1}{N}\p_v(bN p) = b\p_v \tilde{p}$.
    \item The reset term $N(t)\delta_{v=V_R}$ is formally of order $O(N)$, but we explicitly discard it to model the \textit{infinitesimal refractory period}. Physically, this means neurons that fire during the cascade are removed from the domain and do not re-enter until the event concludes.
\end{itemize}
Consequently, the dynamics in the dilated interval are governed by a pure transport equation:
\begin{equation}\label{tau-blowup-a1=0}
    \begin{aligned}
        \p_{\tau}\tilde{p}+b\p_v\tilde{p}&=0,\quad \tau\in(0,\Delta\tau),\, v<V_F,\\
    \tilde{p}(\tau=0,v)&=p_0(v),\quad v\leq V_F.
\end{aligned}
\end{equation} 
To determine the duration of the blow-up $\Delta\tau$, we introduce an auxiliary variable $M(\tau)$ evolving according to:
\begin{equation}\label{dyn-M-a1=0}
\begin{aligned}
        \frac{d}{d\tau}M(\tau)&=b\tilde{p}(\tau,V_F)-1,\qquad\tau\in(0,\Delta\tau).\\\quad M(0)&=0.
\end{aligned}
\end{equation}
The blow-up interval is defined as the time required for this auxiliary variable to drop below zero:
\begin{equation}\label{def-Deltatau-a1=0}
    \Delta \tau:=\inf\{\tau>0: M(\tau)<0\}.
\end{equation} 
The following proposition confirms that this dynamical stopping criterion is equivalent to the condition \eqref{blowup-size-a1=0} in Section \ref{sec:postblowup}. 

\begin{proposition}\label{Prop:time-dilation-jump-a1=0}
 The length of the blow-up interval $\Delta\tau$ defined in \eqref{def-Deltatau-a1=0} satisfies
    \begin{equation}
\begin{aligned}
            \Delta \tau=\inf \{x>0: \mu([V_F-bx,V_F])<x\}.
\end{aligned}
    \end{equation}
\end{proposition}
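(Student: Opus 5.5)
The plan is to solve the transport equation \eqref{tau-blowup-a1=0} explicitly and then integrate \eqref{dyn-M-a1=0} in closed form. By the method of characteristics, the solution of $\p_\tau\tilde p+b\p_v\tilde p=0$ on $v<V_F$ is the rigid translation
\begin{equation}
    \tilde p(\tau,v)=p_0(v-b\tau),\qquad v\leq V_F,\ \tau\in(0,\Delta\tau).
\end{equation}
No boundary condition is needed at $V_F$, because this boundary is an outflow boundary: the characteristics $v=v_0+b\tau$ with $b>0$ leave the domain there. Also, the reset source has been discarded, so nothing re-enters the domain. Hence the trace at the threshold is $\tilde p(\tau,V_F)=p_0(V_F-b\tau)$.

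Next I will integrate \eqref{dyn-M-a1=0} from $0$ to $\tau$. Substituting $s=V_F-b\sigma$ gives
\begin{equation}
    M(\tau)=\int_0^\tau b\,p_0(V_F-b\sigma)\,d\sigma-\tau=\int_{V_F-b\tau}^{V_F}p_0(s)\,ds-\tau=\mu([V_F-b\tau,V_F])-\tau .
\end{equation}
Consequently $\{\tau>0: M(\tau)<0\}=\{x>0:\mu([V_F-bx,V_F])<x\}$. Taking the infimum of both sets, the definition \eqref{def-Deltatau-a1=0} coincides with \eqref{blowup-size-a1=0}, which is the claim. Note that $M$ is continuous because $\mu$ has a density, so $M(\Delta\tau)=0$, matching \eqref{m-equality-bound}.

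The main obstacle is a point of logic, not of computation. The dynamics \eqref{tau-blowup-a1=0}–\eqref{dyn-M-a1=0} are only posed on $(0,\Delta\tau)$, yet $\Delta\tau$ is defined through $M$. This looks circular. To resolve it, I will solve the transport equation on all of $\tau>0$ with the same explicit formula, define $M$ on all of $\tau>0$ by the integral above, and then restrict to $(0,\Delta\tau)$. The solution up to time $\Delta\tau$ depends only on the data before that time, so restricting afterwards changes nothing.

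I will also check the degenerate case. If the set is empty, both infima equal $+\infty$. This cannot actually happen: $\mu\leq 1$, so $M(\tau)<0$ for every $\tau>1$, and therefore both quantities are at most $1$.

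The remaining step is to justify using the trace $p_0(V_F-b\tau)$ in the formula for $M$. This is immediate if $p_0$ is continuous. For $p_0$ merely $L^1$, the formula for $M$ still holds, because only the integral of the trace over $\sigma$ enters, and that integral is well defined for $L^1$ data.
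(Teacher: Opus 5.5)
Your proposal is correct and follows the paper's proof. You solve the transport equation as the translation $\tilde p(\tau,v)=p_0(v-b\tau)$, integrate the ODE for $M$, and substitute $s=V_F-b\sigma$ to get $M(\tau)=\mu([V_F-b\tau,V_F])-\tau$, so the two defining sets coincide. Your extra remarks (outflow boundary, extending to all $\tau>0$ to avoid circularity, $\Delta\tau\le 1$, and $M(\Delta\tau)=0$ by continuity) are valid details that the paper leaves implicit or states separately.
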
 
\begin{proof}
   The expression of $\tilde{p}$ can be solved explicitly in \eqref{tau-blowup-a1=0}  as a translation of $p_0$ with the speed $b>0$
\begin{equation}\label{tmp-translation}
       \tilde{p}(\tau,v)=p_0(v-b\tau),\quad \tau>0,V\leq V_F.
   \end{equation} We can also obtain an expression of $M(\tau)$ by simply integrating the ODE \eqref{dyn-M-a1=0} \begin{equation}\label{expresion-M-a1zero}
       M(\tau)=b\int_0^{\tau}\tilde{p}(\tilde{\tau},V_F)d\tilde{\tau}-\tau.
   \end{equation} Using \eqref{tmp-translation} we derive by a change of variable
   \begin{align*}
b\int_0^{\tau}\tilde{p}(\tilde{\tau},V_F)d\tilde{\tau}&=b\int_0^{\tau}p_0(V_F-b\tilde{\tau})d\tilde{\tau}\\&=\int_0^{\tau}p_0(V_F-b\tilde{\tau})d(b\tilde{\tau})=\int_{V_F-b\tau}^{V_F}p_0(v)dv=\mu([V_F-b\tau,V_F]).
   \end{align*} Hence, together with \eqref{expresion-M-a1zero} we conclude that $M(\tau)<0$ is equivalent to
  $
       \mu([V_F-b\tau,V_F])<\tau,$
    which implies the desired result.
   \end{proof}

Upon exiting the dilated phase, the post-blowup profile is constructed as:
\begin{equation}\label{a1z-pobl-p}
    p(t^+,v)=\tilde{p}(\Delta\tau,v)+\Delta\tau\delta_{v=V_R}.
\end{equation}
In this way, we recover the previous definitions of the post-blowup profile \eqref{post-blowup-calculation}, \eqref{blowup-size-a1=0}.

\begin{remark}[Bound on Blow-up Size]
    Prop. \ref{Prop:time-dilation-jump-a1=0} implies that $\Delta\tau \le 1$. This upper bound guarantees that the dynamics in the dilated timescale will not persist indefinitely; the system is guaranteed to exit the $\tau$-phase and return to the physical timescale $t$.
\end{remark}

\begin{remark}[Physical Interpretation of $M(\tau)$]
The variable $M(\tau)$ acts as a \textit{backlog counter}. It represents the accumulated ``virtual mass'' of neurons that have crossed the threshold $V_F$ but have not yet been effectively processed (reset).
The term $b\tilde{p}(\tau,V_F)$ represents the incoming flux of neurons driven across the threshold, while the term $-1$ represents the normalized rate at which the system processes these spikes in the dilated frame.
Crucially, the cascade persists as long as there is a backlog ($M(\tau) \ge 0$), even if the instantaneous incoming flux drops below the processing rate (i.e., even if $M'(\tau) < 0$). The event terminates only when this accumulated backlog is fully depleted.
\end{remark}
\begin{remark}[Mathematical Justification]
    The dynamics of $M(\tau)$ can be rigorously derived in \cite{dou2024noisy} as the asymptotic limit of a regularized model, where the equation is extended to the whole line and $M(\tau)$ is realized as the mass of neurons with voltages in $[V_F,+\infty)$. Yet the refractory state is not taken into account in \cite{dou2024noisy}.
\end{remark}

\subsubsection{Fragility Interpretation}\label{subsubsection:fra-inter}

Finally, we examine the blow-up from the perspective of \textit{structural stability}, utilizing the concept of ``fragility'' introduced in \cite{hambly2019mckean}. This approach characterizes the sensitivity of the system to external perturbations and provides an iterative viewpoint on the synchronization size.

Given a coupling strength $b>0$, a pre-blowup probability distribution $\mu$ on $(-\infty,V_F]$, and an external stimulus of size $\eps>0$, we define the recursive sequence:
\begin{align}
    f_0(\mu,\eps)&:=\mu([V_F-\epsilon,V_F]),\\
    f_{n+1}(\mu,\eps)&:=\mu([V_F-\epsilon-bf_n(\mu,\epsilon),V_F]).
\end{align} 
Physically, the sequence $f_n(\mu, \eps)$ iteratively calculates the cumulative recruitment of neurons. Starting with an initial stimulus $\epsilon$, each step accounts for the additional neurons recruited by the voltage jump induced by the previous wave. Since $f_n$ is non-decreasing, the limit exists:
\begin{equation}
    f_{\infty}(\mu,\eps):=\lim_{n\rightarrow+\infty} f_{n}(\mu,\eps).
\end{equation} 
This limit represents the total synchronization size resulting from the stimulus $\eps$. The connection to the blow-up size $\Delta m$ is established by examining the limit of vanishing stimulus:

\begin{proposition}[{\cite[Proposition 2.4]{hambly2019mckean}}]\label{prop:fragility}
    Given $b>0$, for an atomless probability measure $\mu$ on $(-\infty,V_F]$, we have
    \begin{equation}\label{fragility-condition}
        \lim_{\eps\rightarrow0^+}f_{\infty}(\mu,\eps)=\inf \{x>0: \mu([V_F-bx,V_F])<x\}.
\end{equation}
\end{proposition}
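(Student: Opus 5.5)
Throughout, write $G(x):=\mu([V_F-bx,V_F])$ for $x\geq 0$. Since $\mu$ is an atomless probability measure, $G$ is continuous, non-decreasing, and satisfies $0\leq G\leq 1$. Denote $\Delta m:=\inf\{x>0: G(x)<x\}$. Because $G(x)\leq 1<x$ for $x>1$, we have $\Delta m\leq 1$.

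The recursion can be rewritten in terms of $G$ as
$$f_0=G(\eps/b),\qquad f_{n+1}=G\big(f_n+\eps/b\big).$$
This is the iteration of the map $G_\eps(x):=G(x+\eps/b)$ started from $0$. The map $G_\eps$ is continuous and non-decreasing, so by the usual monotone-iteration argument $f_\infty(\mu,\eps)$ is the least fixed point of $G_\eps$ on $[0,1]$. The function $\eps\mapsto f_\infty(\mu,\eps)$ is also non-decreasing in $\eps$: by induction, each $f_n$ is monotone in $\eps$. Hence $\lim_{\eps\to0^+}f_\infty(\mu,\eps)$ exists. It therefore suffices to show that this limit is squeezed between $\Delta m-\eps/b$ and any $x^*>\Delta m$ with $G(x^*)<x^*$.

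\textbf{Lower bound.} By the definition of $\Delta m$, $G(x)\geq x$ for all $0\leq x\leq \Delta m$; this is \eqref{jump-size-less-threshold}. Take any $x\in[0,\Delta m-\eps/b]$. Then $x+\eps/b\leq \Delta m$, so
$$G_\eps(x)=G(x+\eps/b)\geq x+\eps/b>x .$$
Thus $G_\eps$ has no fixed point in $[0,\Delta m-\eps/b]$. It follows that $f_\infty(\mu,\eps)\geq \Delta m-\eps/b$, and so $\liminf_{\eps\to0^+} f_\infty(\mu,\eps)\geq\Delta m$. If $\Delta m-\eps/b<0$, the bound is trivially true.

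\textbf{Upper bound.} By the infimum characterization, for every $\delta>0$ there is $x^*\in(\Delta m,\Delta m+\delta)$ with $G(x^*)<x^*$. By continuity of $G$, there is $\eps_0>0$ such that $G(x^*+\eps/b)<x^*$ for all $\eps<\eps_0$. For such $\eps$ we show by induction that $f_n\leq x^*$ for all $n$:
\begin{itemize}
\item Base case: $f_0=G(\eps/b)\leq G(x^*+\eps/b)<x^*$.
\item Inductive step: if $f_n\leq x^*$, then $f_{n+1}=G(f_n+\eps/b)\leq G(x^*+\eps/b)<x^*$.
\end{itemize}
Hence $f_\infty(\mu,\eps)\leq x^*<\Delta m+\delta$ for all $\eps<\eps_0$. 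Letting $\delta\to0$ gives $\limsup_{\eps\to0^+}f_\infty(\mu,\eps)\leq \Delta m$.

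Combining the two bounds proves \eqref{fragility-condition}.

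\textbf{Role of the atomless hypothesis.} The delicate point is the upper bound, which uses continuity of $G$ at $x^*$. This is exactly where the atomless assumption enters. If $\mu$ had an atom, $G$ could jump just to the right of $x^*$, and an arbitrarily small stimulus $\eps$ could recruit a macroscopic extra mass. Continuity of $G$ is also what makes $f_\infty(\mu,\eps)$ a genuine fixed point of $G_\eps$ rather than merely a limit of the iterates; the lower-bound argument relies on this.
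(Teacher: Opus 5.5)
Your proof is correct. The paper does not prove this proposition; it cites \cite[Proposition 2.4]{hambly2019mckean}, so your argument serves as a self-contained replacement.

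You rewrite the recursion as the iteration $f_{n+1}=G(f_n+\eps/b)$ of $G_\eps(x)=G(x+\eps/b)$ started from $0$. The lower bound then comes from $G(x)\ge x$ on $[0,\Delta m]$, which rules out any fixed point of $G_\eps$ in $[0,\Delta m-\eps/b]$. The upper bound comes from trapping every iterate below some $x^*\in(\Delta m,\Delta m+\delta)$ with $G(x^*)<x^*$. What your route adds is that it shows exactly which properties of $G$ the result rests on: monotonicity, and continuity from the right at $x^*$.

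That last point is where your closing paragraph on the atomless hypothesis is inaccurate. The intervals are closed, so $[V_F-bx,V_F]$ decreases to $[V_F-bx^*,V_F]$ as $x\downarrow x^*$. Hence $G$ is right-continuous for every finite measure, with or without atoms. The step ``$G(x^*+\eps/b)<x^*$ for all small $\eps$'' therefore does not use atomlessness. An atom can only make $G$ jump from the left, so it cannot create a jump just to the right of $x^*$ that survives $\eps\to0^+$.

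The lower bound also does not need $f_\infty$ to be a fixed point. Whenever $f_n+\eps/b\le\Delta m$, you have $f_{n+1}=G(f_n+\eps/b)\ge f_n+\eps/b$. This uses only monotonicity, since $G(\Delta m)\ge G(y)\ge y$ for all $y<\Delta m$. So the iterates, being bounded by $1$, must eventually exceed $\Delta m-\eps/b$.

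Your argument therefore goes through without the atomless assumption under the closed-interval convention used here; that hypothesis is inherited from the cited source rather than being the crux. None of this affects the validity of your proof under the stated assumptions.
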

The right-hand side of \eqref{fragility-condition} coincides exactly with our definition of the synchronization size $\Delta m$ in \eqref{blowup-size-a1=0}. 

This result highlights a critical instability. For a stable distribution, $\lim_{\eps\to 0} f_\infty = 0$. However, for a \textit{fragile} distribution, the double limits do not commute:
$$\lim_{n\rightarrow+\infty}\left(\lim_{\epsilon\rightarrow0^+}f_n(\mu,\eps)\right)=0 \quad \neq \quad \lim_{\epsilon\rightarrow0^+}\left(\lim_{n\rightarrow+\infty}f_n(\mu,\eps)\right) = \Delta m > 0.$$
This non-commutativity implies a physical discontinuity: a system is fragile if an infinitesimal stimulus triggers a macroscopic avalanche. The mathematical blow-up size is thus identified as the inherent susceptibility of the network state.

Note that this fragility approach  is originally proposed for models in finance \cite{hambly2019mckean}, which does not have the reset mechanism. Therefore, we do not discuss post-blowup profile here.

\subsubsection{Summarizing Remarks}\label{subsubsec:rmk-a1=0}

From a theoretical standpoint, the three perspectives discussed above—the particle cascade, the time-dilation dynamics, and the fragility analysis—yield a unified description of multiple firing events. They all consistently identify the same synchronization size \eqref{blowup-size-a1=0}. Moreover, the particle system and the time-dilation approach gives the same post-blowup profile \eqref{post-blowup-full}.


However, from a computational perspective, the \textit{Time Dilation} approach distinguishes itself by its algorithmic utility. Unlike the particle interpretation, which relies on discrete enumeration, or the fragility approach, which requires iterative limit procedures, the time-dilation framework transforms the instantaneous singularity into a continuous dynamical process. This allows us to {seamlessly} capture the blow-up size and the reorganized voltage distribution by simply integrating the transport equation \eqref{tau-blowup-a1=0} and the auxiliary variable \eqref{dyn-M-a1=0} forward in the dilated time $\tau$. Consequently, this dynamical formulation provides the natural blueprint for the numerical algorithms we will develop in Section \ref{sec:algorithm}, enabling a solver that automatically navigates through singularities without ad-hoc rules. 

\begin{remark}[Connection between $\tau$ and Cascade Steps]
    We highlight a direct connection between the dilated timescale $\tau$ and the cascade mechanism of the particle system.
    Recall that the dilated time is defined by $d\tau=N(t)dt$. In the context of the particle system (see \eqref{formal-intuition}), the firing rate $N(t)$ formally represents the density of spikes.
    Consequently, the increment $d\tau$ corresponds to the normalized number of spikes:
    \begin{equation}
        \Delta \tau \approx \frac{1}{N_p} \times (\text{number of spikes in the cascade}).
    \end{equation}
    Therefore, the computational time variable $\tau$ is not merely an artificial regularizer; it effectively tracks the progress of the physical avalanche. Indeed, the term ``cascade time-axis'' is used in \cite{delarue2015particle} to describe the index in the cascade loop of the particle systems.  The consistency ensures that our continuum solver, while efficient, remains faithful to the microscopic causality of the cascade process.  
\end{remark}

\section{Numerical Algorithms}\label{sec:algorithm}

Building upon the theoretical framework established in Section \ref{sec:theory}, this section develops a robust numerical strategy to resolve the mean-field Fokker-Planck equation \eqref{eq:fp-classical} beyond the onset of firing rate blow-ups. The central computational challenge lies in the emergence of MFEs, where the standard macroscopic description fails due to the singular nature of the firing rate $N(t)$. To bridge the gap between analytical theory and high-fidelity simulation, we propose a scheme that adaptively captures these discrete synchronization events while maintaining the physical consistency of the underlying neuronal population dynamics.

From a multiscale perspective, the time-dilation approach allows us to treat the blow-up not as a numerical failure, but as a transition to a \textit{mesoscopic} regime. While the \textit{macroscopic} PDE in natural time $t$ provides an efficient global description, the dilated timescale $\tau$ functions as a mesoscopic lens that resolves the fine-grained ``hidden'' dynamics during a cascade. Unlike hybrid methods that necessitate a complete switch to \textit{microscopic} particle systems---often incurring significant computational overhead---our approach remains within the continuum framework, using the $\tau$ timescale as a localized regularizer to resolve singularities. 

The general procedure of the algorithm is as follows:
\begin{enumerate}
    \item \textbf{Macroscopic Evolution:} While the firing rate remains bounded \textit{numerically}, we solve {numerically} the classical Fokker-Planck equation \eqref{eq:fp-classical} in the natural time $t$. 
    
    \item \textbf{Mesoscopic Singularity Resolution:} Upon detecting \textit{numerically} a blow-up of the firing rate, the solver transitions to the dilated timescale $\tau$. In this regime, the instantaneous event is unfolded into a continuous process governed by \eqref{tau-blowup-a1=0}--\eqref{dyn-M-a1=0}. Solving this process numerically allows us to resolve the cascade and determine the blow-up size $\Delta m$ as well as the post-blowup profile.
    
    \item \textbf{Restart the Macroscopic Evolution:} Following the resolution of the MFE, we obtain the post-blowup profile which contains a Dirac mass at $V_R$ due to the reset.  With the post-blowup profile, we restart the numerical evolution of the classical dynamics \eqref{eq:fp-classical} in $t$ timescale.    
\end{enumerate}


The transition from the analytical framework in Section \ref{sec:theory} to a practical numerical solver entails three major computational challenges:

\begin{itemize}
    \item \textbf{Stability in the Convection-Dominated Regime:} As the system approaches a blow-up time, the firing rate $N(t)$ is large and the drift term in the Fokker-Planck equation becomes increasingly dominant, which is a numerical challenge shared by convection-diffusion problems (e.g. \cite{stynes2018convection}). To ensure an accurate approximation, we need to investigate whether the structure-preserving framework established in \cite{hu2021structure,he2022structure} remains robust across varying firing rates.
    
    \item \textbf{Mesh-Independent Blow-up Detection:} A critical requirement for the algorithm is a reliable trigger for timescale switching. We need to numerically detect the blow-up of $N(t)$. Unlike heuristic thresholds in \cite{DXZ2024}, we derive a natural criterion based on the loss of Dirichlet boundary condition. It can be recast in a mesh-independent form \eqref{numerical-dbcl} and is consistent with recent theoretical result \cite{munoz2025free}. Thus it provides a rigorous numerical signature of the loss of boundary control, signaling the onset of an MFE in a manner that remains consistent as the mesh is refined. The criteria is also consistent with other parts of the algorithms which ensures seamless switching between scales.
    
    \item \textbf{Regularization of the Post-MFE Singularity:} The immediate consequence of a multiple firing event is the accumulation of a Dirac mass at the reset potential $V_R$. Since fixed-grid methods cannot resolve such singularities without excessive diffusion, we adopt a semi-analytical approach. This hybrid treatment allows the singular component to evolve according to its local linearized dynamics until it is sufficiently regularized for seamless reintegration into the continuous density.
\end{itemize}

In the following we detail our algorithms. Along the presentation we show theoretical properties which help us address the numerical challenges discussed above. A compact summary of the algorithm is given at the end of this section. 

\subsection{Mesh Setting}\label{subsec:mesh}

We truncate the spatial\footnote{More precisely, it is the `voltage domain'. } domain from $(-\infty,V_F]$ to $[V_{\min},V_F]$ with $V_{\min}<V_R$ and $V_R-V_{\min}$ reasonably large such that the relevant dynamics mostly happen in $[V_{\min},V_F]$.  We introduce a spatially uniform mesh with gridsize $h>0$ and denote $v_i:=V_{\min}+ih$ for $i=0,1,...n$. Here we choose $h>0$ such that $n=\frac{V_F-V_{\min}}{h}$ is an integer and thus $v_n=V_F$. We also set $V_R$ to be a grid point (by choosing $(h, V_{\min})$ properly), that is, there exists an integer $0<i_R<n$ such that
\begin{equation}\label{def:ir}
    V_R=V_{\min}+i_Rh.
\end{equation}

We solve the dynamics of the classical solution in $t$ timescale and only switch to the dilated timescale $\tau$ to resolve the blow-up. The timesteps in each timescale are denoted by $\delta t$ and $\delta \tau$, respectively.

\subsection{Before Blow-up: Classical Dynamics}\label{subsubsec:cl-a1z}

We first present the numerical schemes for the classical PDE \eqref{eq:fp-classical}, when the firing rate does not blow-up. We adopt the schemes in \cite{hu2021structure,he2022structure} which satisfy several structure-preserving properties. First, we consider a {Chang-Cooper type reformulation} \cite{ChangCooper1970} ({aka} the Scharfetter-Gummel reformulation \cite{ScharfetterGummel1969}) to rewrite \eqref{eq:fp-classical} as
\begin{equation}\label{eq:SG-reformulate}
    \p_tp=\p_v(F(t,v)),\qquad t>0,v<V_F,
\end{equation} where the flux $F(t,v)$ is written as
\begin{align}
F(t,v):&=a\p_vp-(-v+bN(t))p+N(t)H(v-V_R)\\&= a\mathcal{M}(t,v)\p_v(p(t,v)/\mathcal{M}(t,v))+N(t)H(v-V_R).\label{def:F}
\end{align} Here $H(v):=\mathbb{I}_{v>0}$ is the Heaviside function and $\mathcal{M}(t,v)$ is given by
\begin{equation}\label{def:M}
    \mathcal{M}(t,v):=\exp\left(\frac{1}{a}\left(-\frac{1}{2}v^2+{bN(t)}v\right)\right).
\end{equation} 

Now we consider a finite volume discretization based on the reformulation \eqref{eq:SG-reformulate}-\eqref{def:F}-\eqref{def:M}. We first consider the semi-discretization: discretizing the space (voltage) variable while keeping the time variable continuous. For $p_i(t)\approx p(t,v_i)$, we have
\begin{equation}\label{semi-discrete}
    \p_t p_i =\frac{F_{i+\frac{1}{2}}-F_{i-\frac{1}{2}}}{h},\qquad t>0, i=1,...,n-1,
\end{equation} where the flux $F_{i+\frac{1}{2}}$ at the half-grid is defined as
\begin{equation}\label{def:F-discrete}
    F_{i+\frac{1}{2}}:=a\mathcal{M}_{i+\frac{1}{2}}\frac{\frac{p_{i+1}}{\mathcal{M}_{i+1}}-\frac{p_{i}}{\mathcal{M}_{i}}}{h}+N_h(t)\mathbb{I}_{i\geq i_R},\qquad t>0,i=0,1,...n-1.
.\end{equation} Here we recall $i_R$ is defined in \eqref{def:ir}. For $\mathcal{M}_i$ and $\mathcal{M}_{i+\frac{1}{2}}$, we simply choose
\begin{equation}\label{def-Mi}
\mathcal{M}_i(t):=\mathcal{M}_h(t,V_{\min}+ih),\quad \mathcal{M}_{i+\frac{1}{2}}(t):=\mathcal{M}_h\left(t,V_{\min}+(i+\frac{1}{2})h\right),
\end{equation} where the function $\mathcal{M}_h(t,v)$ is the analog of the function $\mathcal{M}(t,v)$ in \eqref{def:M} with $N_h(t)$ in place of $N(t)$
\begin{equation}\label{def-Mh}
    \mathcal{M}_h(t,v):=\exp\left(\frac{1}{a}\left(-\frac{1}{2}v^2+{bN_h(t)}v\right)\right).
\end{equation} The numerical firing rate $N_h$ is defined as a first order discretization
\begin{equation}\label{def:Nh}
    N_h(t):=a\frac{p_{n-1}}{h}\approx -a\p_vp(t,V_F).
\end{equation} In principle, $p_i,F_{i+\frac{1}{2}}$ and $\mathcal{M}_i$ also depend on the gridsize $h$, but we omit the notational dependence here for simplicity.

For the boundary conditions, we set
\begin{equation}
    F_{\frac{1}{2}}=F_{n-\frac{1}{2}}=0.
\end{equation} Note that in view of \eqref{def:F-discrete}, this discrete boundary condition determines $p_0$ and $p_{n}$ implicitly, which shall be close to zero (but might not be exactly zero at the numerical level).

The semi-discrete scheme \eqref{semi-discrete} can be written more explicitly as
\begin{align}
    \p_tp_i=\frac{a}{h^2}\left(\frac{\mathcal{M}_{i+1/2}}{\mathcal{M}_{i+1}}p_{i+1}-\frac{\mathcal{M}_{i+1/2}+\mathcal{M}_{i-1/2}}{\mathcal{M}_i}p_i+\frac{\mathcal{M}_{i-1/2}}{\mathcal{M}_{i-1}}p_{i-1}\right)+\frac{1}{h}N_h(t)\mathbb{I}_{i=i_R},\qquad t>0,2\leq i\leq n-2,
\end{align} with the equations for $i=1$ and $i=n-1$ 
\begin{align}
    \p_tp_{1}&=\frac{a}{h^2}\left(\frac{\mathcal{M}_{1+1/2}}{\mathcal{M}_{2}}p_{2}-\frac{\mathcal{M}_{1+1/2}}{\mathcal{M}_1}p_1\right),\qquad t>0,\\
    \p_tp_{n-1}&=\frac{a}{h^2}\left(-\frac{\mathcal{M}_{n-1-1/2}}{\mathcal{M}_{n-1}}p_{n-1}+\frac{\mathcal{M}_{n-1-1/2}}{\mathcal{M}_{n-1-1}}p_{n-2}\right)-\frac{1}{h}N_h(t),\qquad t>0.
\end{align} We introduce the short-hand notations
\begin{align}\label{def:r+-r-}
    r_{i,+}:=\frac{\mathcal{M}_{i+1/2}}{\mathcal{M}_{i}},\qquad r_{i,-}:=\frac{\mathcal{M}_{i-1/2}}{\mathcal{M}_{i}}.
\end{align} And then the equation for can be written more compactly as
\begin{equation}\label{compact-r}
    \p_tp_i=\frac{a}{h^2}\left(r_{i+1,-}p_{i+1}-(r_{i,+}+r_{i,-})p_i+r_{i-1,+}p_{i-1}\right),\qquad t>0,i=2,...,n-2,
\end{equation} and similarly for $i=1,n-1$.

\paragraph{Full Discretization}

Now we include the temporal discretization to obtain a fully discrete scheme. For $p_i^m\approx p(m\delta t,V_{\min}+ih)$, we consider
\begin{equation}\label{full-dis}
    \frac{p^{m+1}_i-p_i^m}{\delta t}=\frac{F^{m^*}_{i+1/2}-F_{i-1/2}^{m^*}}{h}.
\end{equation} At the fully discrete level, we define the firing rate similarly to \eqref{def:Nh} as follows
\begin{equation}\label{def-Nhm}
    N_h^{m}:=a\frac{p_{n-1}^m}{h}.
\end{equation} The key is to choose a proper flux $F^{m*}$, which has dependencies on $N$ and $p$  \eqref{def:F}. It depends on the firing rate $N(t)$ in two ways: a nonlinear dependence through $\mathcal{M}(t,v)$, and an explicit dependence in the second term $N(t)H(v-V_R)$. At the discrete level, a key choice is on using $(N,p)$ from which step (explicitly from step $m$ or implicitly from step $m+1$) in the definition of $F^{m^*}$, resulting a variety of schemes from the fully explicit to the fully implicit. 

In this work, we consider the following choice as in \cite{hu2021structure}. For the (linear) dependence on $p$, we use the data at step $m+1$, treating it implicitly. For the nonlinear dependence on $N$ through $M$ (due to interaction), we use the data at step $m$, treating it explicitly. For the linear dependence on $N$ (due to reset), we also use the data at step $m$ to treat it explicitly, leading to
\begin{equation}\label{F-choice-1}
F^{m^*}_{i+1/2}:=a\mathcal{M}_{i+\frac{1}{2}}^m\frac{\frac{p_{i+1}^{m+1}}{\mathcal{M}_{i+1}^m}-\frac{p_{i}^{m+1}}{\mathcal{M}_{i}^m}}{h}+N_h^m\mathbb{I}_{i\geq i_R},\qquad i=0,1,...n-1.
\end{equation} Here $\mathcal{M}_{i}^m$ is defined similarly to \eqref{def-Mi}-\eqref{def-Mh} with the fully discrete firing rate $N_h^m$ in place of the semi-discrete firing rate $N_h(t)$. This choice of flux \eqref{F-choice-1} leads to an implicit but linear equation to solve at each step. 

To gain a better understanding of the scheme, we write down the linear equation involved at each step explicitly. Set $\mathbf{p}^m:=(p_1^m,...,p_{n-1}^m)^T$ to be the $n-1$ dimensional vector for the numerical solution. The linear equation corresponding to \eqref{F-choice-1} is
\begin{equation}\label{linear-choice-1}
A\mathbf{p}^{m+1}=\mathbf{p}^m+\frac{\delta t}{h}N_h^m(\mathbf{e}_{i_R}-\mathbf{e}_{n-1}),
\end{equation} where $\mathbf{e}^i$ is the indicator vector defined as $(\mathbf{e}^i)_j:=\mathbb{I}_{i=j}$ for $i,j=1,...,n-1$, and $A=(a_{i,j})_{(n-1)\times (n-1)}$ is a tri-diagonal matrix determined by
\begin{align}
    a_{i,i+1}&:=-a\frac{\delta t}{h^2}r_{i+1,+}^m,\qquad i=1,2,...,n-1,\\
    a_{i,i}&:=1+a\frac{\delta t}{h^2}(r_{i,+}^m\mathbb{I}_{i\leq n-2}+r_{i,-}^m\mathbb{I}_{i\geq 2}),\qquad i=1,2,...,n-1,\\
    a_{i,i-1}&:=-a\frac{\delta t}{h^2}r_{i-1,+}^m,\qquad i=2,...,n-1,
\end{align} where $r_{i,+}^m$ and $r_{i,-}^m$ are defined similarly to \eqref{def:r+-r-}
\begin{align}\label{def:r+-r-full}
    r_{i,+}^m:=\frac{\mathcal{M}^m_{i+1/2}}{\mathcal{M}_{i}^m},\qquad r_{i,-}^m:=\frac{\mathcal{M}_{i-1/2}^m}{\mathcal{M}_{i}^m}.
\end{align} We note that the matrix $A$ itself depends on the step $m$. 
\begin{remark}
There is an alternative choice of the flux proposed in \cite{he2022structure}, which treats the the reset firing rate implicitly using the data at step $m+1$. 
 It is given by
\begin{equation}\label{F-choice-2}
F^{m^*}_{i+1/2}:=a\mathcal{M}_{i+\frac{1}{2}}^m\frac{\frac{p_{i+1}^{m+1}}{\mathcal{M}_{i+1}^m}-\frac{p_{i}^{m+1}}{\mathcal{M}_{i}^m}}{h}+N_h^{m+1}\mathbb{I}_{i\geq i_R},\qquad i=0,1,...n-1.
\end{equation} The only difference compared to \eqref{F-choice-1} is in the last term: using $N_h^{m+1}\mathbb{I}_{i\geq i_R}$ instead of $N_h^{m}\mathbb{I}_{i\geq i_R}$.  As $N_h^{m+1}$ depends on $p^{m+1}$ linearly \eqref{def-Nhm}, this alternative choice also leads to a linear equation to solve at each step, written as
    \begin{equation}
A\mathbf{p}^{m+1}=\mathbf{p}^m+a\frac{\delta t}{h^2}p^{m+1}_{n-1}(\mathbf{e}_{i_R}-\mathbf{e}_{n-1}),
\end{equation} where the matrix $A$ is the same as in \eqref{linear-choice-1}. We can move the second term from the right hand side to the left, leading to a linear equation of the more standard form
\begin{equation}
\tilde{A}\mathbf{p}^{m+1}=\mathbf{p}^{m},
\end{equation} where $\tilde{A}$ is `almost' a trigonal matrix: it has one additional non-zero element. The second choice can be more stable as the reset firing rate is treated implicitly. However, in this work we focus on the first choice \eqref{F-choice-1} as it is simpler and is enough for our purposes.
 \end{remark}

\subsection{Detect Blow-up: Criteria}

The blow-up of the firing rate $N(t)$ manifests mathematically as a boundary singularity $-\p_vp(t,V_F)=+\infty$. To resolve this numerically, we must establish a discrete threshold that distinguishes high-activity regimes from actual singularities. A natural quantitative starting point is to monitor the numerical firing rate $N_h^m$. Our first detection threshold given as follows
\begin{equation}\label{cr-blow-up-a0=0}
    N_h^m\geq \frac{a}{bh}.
\end{equation} Intuitively, this means that the macroscopic description breaks down when the firing rate scales inversely with the grid size, i.e., $N_h \sim O(1/h)$.

However, a deeper understanding is achieved by analyzing the structural implications of this condition. By substituting the definition of $N_h^m$ \eqref{def-Nhm} into \eqref{cr-blow-up-a0=0}, we derive an equivalent condition on the boundary density
\begin{equation}\label{numerical-dbcl}
   b p^m_{n-1}\geq 1.
\end{equation}
This condition \eqref{numerical-dbcl} reveals the physical essence of the blow-up: it marks the breakdown of the Dirichlet boundary condition. The value $1/b$ is hinted in the condition \eqref{blowup-size-a1=0} as $p(V_F)=\frac{1}{b}$ is the threshold to have a non-trivial blow-up size. Furthermore, this threshold aligns with recent theoretical findings for related models \cite{munoz2025free}, where $1/b$ is identified as a critical barrier for the boundary density to have a blow-up. Unlike the heuristic rate threshold \cite{DXZ2024}, the density threshold \eqref{numerical-dbcl} avoids ad-hoc parameter and provides a {mesh-independent} signature of the onset of synchronization.

This criterion also ensures the logical consistency of the algorithmic cycle. As will be shown in Prop. \ref{prop:post-blowup}, the numerical post-blowup profile constructed by our method naturally satisfies $b p_{n-1} < 1$. Consequently, the use of \eqref{numerical-dbcl} as a trigger prevents the spurious immediate re-triggering of the blow-up procedure, ensuring a clean exit from the $\tau$-dynamics back to the $t$-dynamics.

Crucially, this detection criterion serves as a stability guarantee for the pre-blowup scheme. In the regime immediately preceding a synchronization event, the system becomes strongly drift-dominated ($N(t) \gg 1$), potentially creating sharp boundary layers of width $O(1/N)$ that challenge standard discretizations \cite{stynes2018convection}. However, our criterion acts as a safeguard. As long as the system remains below the threshold (i.e., Eq. \eqref{numerical-dbcl} is not met), the exponential fitting coefficients in our scheme remain uniformly bounded, preventing numerical breakdown even in the presence of large drifts. This robustness is formally established in the following proposition.

\begin{proposition}\label{prop:bounded}Suppose  \eqref{cr-blow-up-a0=0} does not hold and $N_h^m$ as defined in \eqref{def-Nhm} is non-negative, i.e.,
\begin{equation}\label{bdnh}
   0\leq  N_h^m<\frac{a}{bh}.
\end{equation} The the coefficients $r_{i,+}^m,r_{i,-}^m$ defined in \eqref{def:r+-r-full} satisfies
\begin{equation}
   e^{-1/2}(1+O(h))\leq r_{i,+}\leq 1+O(h),\qquad    1+O(h)\leq r_{i,-}\leq e^{1/2}(1+O(h)).
\end{equation}
    \end{proposition}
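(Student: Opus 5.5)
The plan is a direct computation: write each coefficient as the exponential of an explicit quadratic expression in $h$, and isolate the single term that involves the firing rate. By \eqref{def-Mi}--\eqref{def-Mh} with $N_h^m$ in place of $N_h(t)$, and with $v_i=V_{\min}+ih$, we have
\begin{equation}
\log\frac{\mathcal{M}^m_{i\pm1/2}}{\mathcal{M}^m_i}=\frac{1}{a}\Big(-\tfrac12 (v_i\pm\tfrac h2)^2+\tfrac12 v_i^2\pm bN_h^m\tfrac h2\Big)
=\mp\frac{v_ih}{2a}-\frac{h^2}{8a}\pm\frac{bN_h^m h}{2a}.
\end{equation}
The first two terms are the harmless part. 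Since $v_i\in[V_{\min},V_F]$ is confined to a fixed bounded interval, $|v_ih/(2a)|+h^2/(8a)\le Ch$ with $C$ independent of $i$, $m$ and $N_h^m$. Hence these two terms contribute a multiplicative factor $e^{O(h)}=1+O(h)$, uniformly in $i$ and $m$.

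The firing-rate term is the only one that could be large, and it is the one hypothesis \eqref{bdnh} controls. Multiplying $0\le N_h^m<\frac{a}{bh}$ by $\frac{bh}{2a}>0$ gives
\begin{equation}
0\le \frac{bN_h^m h}{2a}<\frac12 .
\end{equation}
So the rate-dependent factor $\exp(\pm bN_h^mh/(2a))$ lies in $[1,e^{1/2})$ for one sign and in $(e^{-1/2},1]$ for the other. Combining with the previous paragraph, one coefficient is trapped between $e^{-1/2}(1+O(h))$ and $1+O(h)$, and the other between $1+O(h)$ and $e^{1/2}(1+O(h))$. The key point is that the bound is uniform in $N_h^m$: the large drift $bN_h^m$ always appears multiplied by $h$, and the threshold \eqref{cr-blow-up-a0=0} is exactly what keeps the product $bN_h^m h$ bounded.

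The main obstacle is sign bookkeeping, namely which of $r^m_{i,+}$, $r^m_{i,-}$ receives the factor $e^{-bN_h^mh/(2a)}$. The statement assigns the downward bound to $r_{i,+}$. Taken literally, \eqref{def:r+-r-full} gives $r^m_{i,+}=\mathcal{M}^m_{i+1/2}/\mathcal{M}^m_i$, whose rate term is $+bN_h^mh/(2a)$. That points the other way.

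To obtain the statement as worded, I would orient each coefficient as it actually enters the tridiagonal system \eqref{compact-r}, i.e. through the ratio between the half-grid point and the neighbouring node it multiplies. For example, the coefficient coupling to node $i+1$ is $\mathcal{M}_{i+1/2}/\mathcal{M}_{i+1}$, and its rate term is $-bN_h^mh/(2a)$; the mirror statement holds on the other side. With that reading, the factor $\exp(-bN_h^mh/(2a))\in(e^{-1/2},1]$ attaches to the ``$+$'' coefficient and $\exp(+bN_h^mh/(2a))\in[1,e^{1/2})$ to the ``$-$'' one, which is the stated pairing.

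I am not certain this reading is the one intended. Under the literal definition \eqref{def:r+-r-full}, the same computation yields the opposite pairing. In that case the labels $\pm$ in the statement would have to be exchanged, and the argument itself would be unchanged. In either reading, all the content sits in the two displayed estimates above: a uniform $O(h)$ correction, plus a rate-dependent factor confined to $[e^{-1/2},e^{1/2}]$ by \eqref{bdnh}.
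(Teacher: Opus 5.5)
Your computation is the same as the paper's proof. It writes $r^m_{i,+}$ as the exponential of $\frac{1}{a}\bigl[-\frac12 v_ih+\frac12 bN_h^mh+O(h^2)\bigr]$, absorbs the $v_i$-term into $1+O(h)$ on the bounded domain, and uses \eqref{bdnh} to get $0\le \frac{b}{2a}N_h^mh<\frac12$.

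Your sign concern is justified, and the literal reading of \eqref{def:r+-r-full} is the right one. The paper's own exponent has $+\frac{b}{2a}N_h^mh$ for $r^m_{i,+}$, which gives $r^m_{i,+}\in[1+O(h),e^{1/2}(1+O(h)))$ and $r^m_{i,-}\in(e^{-1/2}(1+O(h)),1+O(h)]$. So the $\pm$ labels in the stated bounds are simply interchanged, and no reinterpretation through \eqref{compact-r} is needed.
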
 
The non-negativity of $N_h^m$ is natural, as by definition \eqref{def-Nhm} it shall follow from the non-negativity of $p_i^m$. Positivity-keeping properties of the scheme \eqref{full-dis}-\eqref{F-choice-1} (with a condition related to the CFL number) have been proved in \cite[Theorem 2.2]{hu2021structure}.


\begin{proof}[Proof of Proposition~\ref{prop:bounded}]
    We explicitly analyze the exponent in the definition of $r_{i,+}^m$. Recalling the definitions of $\mathcal{M}_h$ \eqref{def-Mh} and the ratio \eqref{def:r+-r-full}, we have
    \begin{align}
        r_{i,+}^m = \frac{\mathcal{M}^m_{i+1/2}}{\mathcal{M}_{i}^m} &= \exp\left( \frac{1}{a} \left[ -\frac{1}{2}(v_{i+1/2}^2 - v_i^2) + bN_h^m (v_{i+1/2} - v_i) \right] \right).
    \end{align}
    Using the grid relations $v_{i+1/2} - v_i = h/2$ and $v_{i+1/2}^2 - v_i^2 = v_i h + O(h^2)$, the exponent becomes
    \begin{equation}
        \frac{1}{a} \left[ -\frac{1}{2}v_i h + \frac{1}{2} b N_h^m h + O(h^2) \right].
    \end{equation}
    Since the computation domain is bounded, the term $-\frac{1}{2a}v_i h$ is simply $O(h)$. The critical interaction term involves the product $N_h^m h$.  The condition \eqref{bdnh} implies that $0\leq \frac{b}{2a}N_h^m h<\frac{1}{2}$, which allows us to conclude for $r_{i,+}^m$. A similar derivation applies to $r_{i,-}^m$, completing the proof.

\end{proof}

\subsection{During Blow-up: Time dilation}\label{subsubsec: dilation-blowup}

Once the switching criterion \eqref{numerical-dbcl} is satisfied, the solver suspends the macroscopic evolution in $t$ and activates the mesoscopic dynamics in the dilated timescale $\tau$. The primary objective in this regime is to resolve the instantaneous reorganization of the voltage distribution and determining the blow-up size $\Delta m$.

To discretize the mesoscopic system \eqref{tau-blowup-a1=0}--\eqref{dyn-M-a1=0}, we retain the same spatial grid with mesh size $h$ and select the time step $\delta\tau$ to satisfy the unit CFL condition
\begin{equation}\label{tau-timestep}
    \delta\tau := \frac{1}{b}h.
\end{equation}
This choice eliminates numerical diffusion in the transport step, as the upwind scheme becomes an exact shift operator. We denote the discrete variables in this regime by $\tilde{p}_i^k \approx \tilde{p}(k\delta\tau, v_i)$ and $M^k \approx M(k\delta\tau)$, where the index $k$ counts the steps in the dilated timescale, distinct from the macroscopic step $m$.

{ We initialize those variables from the numerical pre-blowup profile $p^m_i$ in the last step of the classical dynamics, as follows
\begin{equation}\label{initialize-tau}
    \tilde{p}^0_i:=p_i^m,\quad i=1,...,n-1,\qquad M^0:=0,\qquad \tau^0:=0,
\end{equation} where we also explicitly records the time $\tau$ in the dilation timescale.}

The dynamics for $\tilde{p}$ \eqref{tau-blowup-a1=0} is a simple transport equation and we use an explicit upwind scheme to discretize it
\begin{equation}\label{ic-n-a1z}
    \frac{\tilde{p}^{k+1}_i-\tilde{p}^k_i}{\delta \tau}+b\frac{\tilde{p}^{k}_i-\tilde{p}^k_{i-1}}{h}=0,\qquad i=1,...,n-1.
\end{equation}With our choice $\delta\tau=\frac{1}{b}h$, the above scheme reduces to a translation
\begin{equation}\label{a1z-blow-up-tildep}
    \tilde{p}^{k+1}_i=\tilde{p}^{k}_{i-1},\qquad i=1,...,n-1.
\end{equation}For the boundary condition, we set the incoming flux to zero
\begin{equation}\label{bc-tilde-i}
    \tilde{p}^{k}_{0}=0.
\end{equation}

To solve the ODE for $M(\tau)$ \eqref{dyn-M-a1=0}, we use
\begin{equation}\label{n-ode-m-a1zero}
    M^{k+1}=M^k+\delta\tau(b\tilde{p}^k_{n-1}-1).
\end{equation} At each step we also update explicitly the time variable $\tau$
\begin{equation}\label{n-tau-a1z}
    \tau^{k+1}=\tau^k+\delta\tau,\qquad \tau^k=k\delta\tau.
\end{equation} 
This coupled schemes  $\tilde{p}$ and $M$ \eqref{a1z-blow-up-tildep}-\eqref{n-ode-m-a1zero} ensures that the total mass of the system is strictly conserved during the numerical evolution of the time dilation process, as established below.

\begin{proposition}[Discrete Mass Conservation]\label{prop:conserved-mass-a1z}
    The numerical scheme \eqref{ic-n-a1z}--\eqref{n-tau-a1z} with $\delta\tau=\frac{1}{b}h$ preserves the total mass in the following sense:
    \begin{equation}
        \sum_{i=1}^{n-1}\tilde{p}_i^{k+1}h + M^{k+1} + \tau^{k+1} = \sum_{i=1}^{n-1}\tilde{p}_i^{k}h + M^{k} + \tau^{k} = \sum_{i=1}^{n-1}\tilde{p}_i^{0}h.
    \end{equation}
\end{proposition}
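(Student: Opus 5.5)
The proof is a direct one-step telescoping computation; the second equality then follows by induction on $k$ from the initialization \eqref{initialize-tau}, which gives $M^0=\tau^0=0$ and hence the value $\sum_{i}\tilde{p}_i^0h$. So it suffices to show that the discrete quantity $\sum_{i=1}^{n-1}\tilde{p}_i^{k}h + M^{k} + \tau^{k}$ does not change when passing from $k$ to $k+1$. I will compute the change of each of the three terms separately and check that they cancel.

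\textbf{Density term.} By the shift form \eqref{a1z-blow-up-tildep} together with the inflow boundary condition \eqref{bc-tilde-i},
\begin{equation}
\sum_{i=1}^{n-1}\tilde{p}_i^{k+1}h=\sum_{i=1}^{n-1}\tilde{p}_{i-1}^{k}h=\tilde{p}_0^kh+\sum_{i=1}^{n-2}\tilde{p}_i^kh=\sum_{i=1}^{n-1}\tilde{p}_i^kh-\tilde{p}^k_{n-1}h.
\end{equation}
Thus the grid mass loses exactly the outflow $\tilde{p}^k_{n-1}h$ at the last cell, and gains nothing at the left end because $\tilde{p}^k_0=0$.

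\textbf{Backlog and time terms.} By \eqref{n-ode-m-a1zero} and \eqref{n-tau-a1z},
\begin{equation}
M^{k+1}+\tau^{k+1}=M^k+\tau^k+\delta\tau\, b\,\tilde{p}^k_{n-1}-\delta\tau+\delta\tau=M^k+\tau^k+b\,\delta\tau\,\tilde{p}^k_{n-1}.
\end{equation}
The unit CFL choice \eqref{tau-timestep}, $\delta\tau=h/b$, gives $b\,\delta\tau=h$. The gain is therefore $\tilde{p}^k_{n-1}h$, which exactly cancels the outflow lost from the density term, proving the first equality.

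The only point requiring care is index bookkeeping: the sum runs over $i=1,\dots,n-1$, the boundary value $\tilde{p}_0^k$ vanishes, and the outflow is measured through $\tilde{p}^k_{n-1}$, which matches the quantity used in the $M$-update. The cancellation also relies essentially on the exact CFL relation $b\,\delta\tau=h$.
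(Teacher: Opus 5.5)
Your proof is correct and matches the paper's argument: the shift update with $\tilde p^k_0=0$ removes exactly $\tilde p^k_{n-1}h$ from the grid mass, and the $M$- and $\tau$-updates return the same amount because $b\,\delta\tau=h$; induction from $M^0=\tau^0=0$ then gives the second equality. Your outflow term $\tilde p^k_{n-1}h$ is the correct one; the paper's display writes $\tilde p^{k+1}_{n-1}h$, which looks like a typo.
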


\begin{proof}
    By \eqref{a1z-blow-up-tildep} and \eqref{bc-tilde-i}, we compute
    \begin{equation}
        \sum_{i=1}^{n-1}\tilde{p}_i^{k+1}h=\sum_{i=1}^{n-1}\tilde{p}_{i-1}^{k}h=\sum_{i=1}^{n-2}\tilde{p}_{i}^{k}h=\left(\sum_{i=1}^{n-1}\tilde{p}_{i}^{k}h\right)-\tilde{p}^{k+1}_{n-1}h.
    \end{equation} And \eqref{n-ode-m-a1zero} implies, when $\delta\tau=\frac{1}{b}h$,
    \begin{equation}
        M^{k+1}-M^k=h\tilde{p}^k_{n-1}-\delta\tau.
    \end{equation}Combining these with $\tau^{k+1}-\tau^k=\delta\tau$, we obtain the desired conservation. We also recall initially $M^0$ and $\tau^0$ are set to zero \eqref{initialize-tau}.
\end{proof}

\paragraph{Stop Criteria} We stop the numerical time-dilation dynamics if 
\begin{equation}\label{stop-blowup-criteria}
M^k+\delta\tau(b\tilde{p}^k_{n-1}-1)<0.
\end{equation} This is the discrete counterpart of $M(\tau)<0$ \eqref{def-Deltatau-a1=0}. Note that if we continue to evolve the numerical time-dilation dynamics, then the left hand side of \eqref{stop-blowup-criteria} would be $M^{m+1}$ in the next step.

When \eqref{stop-blowup-criteria} is met, we exit the dynamics in $\tau$ and restart the classical dynamics with a numerical post-blowup profile, which consists of two parts: a continuous density part represented by the vector $\tilde{\mathbf{{p}}}
^k_i$, and a Dirac mass part at $V_R$ with mass
\begin{equation}   \label{def-tauh-a1z} 
\Delta\tau_h:=M^k+\tau^k.
\end{equation} We choose the above mass (instead of taking $\Delta\tau_h$ to be $\tau^m$) to obtain exact mass conservation (c.f. Prop.~\ref{prop:conserved-mass-a1z}). The following proposition shows that the `continuous' part will not satisfy the numerical blow-up criteria \eqref{cr-blow-up-a0=0} and thus rules out the possibility of an immediate blow-up afterwards.
\begin{proposition}\label{prop:post-blowup} 
 For $(\tilde{\mathbf{{p}}}^k,M^k,\tau^k)$ evolving via \eqref{ic-n-a1z}-\eqref{a1z-blow-up-tildep}-\eqref{n-ode-m-a1zero}-\eqref{n-tau-a1z}, when \eqref{stop-blowup-criteria} is satisfied for the first time at step $k$ , we have
\begin{equation}\label{post-blowup-well}
     b\tilde{p}^{k}_{n-1}<1.
 \end{equation}
\end{proposition}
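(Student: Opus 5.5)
The proof will come straight from the stopping rule \eqref{stop-blowup-criteria} together with the fact that the scheme did not stop at any earlier step. The key observation is that \eqref{stop-blowup-criteria} is exactly the statement $M^{k+1}<0$, where $M^{k+1}$ is the quantity \eqref{n-ode-m-a1zero} would produce if we continued. Since step $k$ is the first index at which the criterion holds, every previous update was accepted, so
\begin{equation}
M^j\geq 0,\qquad j=0,1,\dots,k.
\end{equation}
For $j=0$ this is the initialization $M^0=0$ in \eqref{initialize-tau}. For $1\leq j\leq k$ it holds because the criterion failed at step $j-1$, which means $M^{j}=M^{j-1}+\delta\tau(b\tilde p^{j-1}_{n-1}-1)\geq 0$.

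With $M^k\geq 0$ in hand, I rewrite \eqref{stop-blowup-criteria} as
\begin{equation}
\delta\tau\,(b\tilde p^k_{n-1}-1)<-M^k\leq 0 .
\end{equation}
Since $\delta\tau=h/b>0$, dividing by $\delta\tau$ gives $b\tilde p^k_{n-1}-1<0$, which is exactly \eqref{post-blowup-well}. No sign assumption on $\tilde p$ is needed. The only ingredients are $M^k\geq0$ and the positivity of $\delta\tau$.

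The only point needing care is the bookkeeping of indices, namely that "first time at step $k$" really forces $M^k\geq 0$, including the base case $k=0$. This deserves a remark. At $k=0$ the scheme was entered because the criterion \eqref{numerical-dbcl} held, i.e.\ $b\tilde p^0_{n-1}=bp^m_{n-1}\geq1$. In that case \eqref{stop-blowup-criteria} reads $\delta\tau(b\tilde p^0_{n-1}-1)<0$, which is false. So the stop cannot occur at $k=0$, and the argument above applies for all $k\geq1$.

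It is also worth noting what this means for the algorithm. The continuous part $\tilde{\mathbf p}^k$ of the post-blowup profile fails \eqref{numerical-dbcl}, so the classical solver is not immediately re-triggered. The Dirac mass sits at $V_R$, away from the boundary node $n-1$ because $i_R<n-1$, so it does not affect $N_h$ at the restart.
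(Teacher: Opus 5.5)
Your argument is correct and matches the paper's proof: stopping for the first time at step $k$ forces $M^{k}\geq 0$, and then \eqref{stop-blowup-criteria} together with $\delta\tau>0$ gives $b\tilde p^{k}_{n-1}<1$. Your closing aside is not needed, and its justification is off: the Dirac mass stays out of $N_h$ because it is carried by the separate Gaussian packet, not because $i_R<n-1$ (the paper only guarantees $0<i_R<n$).
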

\begin{proof}
The fact that \eqref{stop-blowup-criteria} is satisfied for the first time for at step $k$ implies that $M^{k'}\geq0$ for all $k'\leq k$. In particular $M^{k}\geq0$. Thus \eqref{stop-blowup-criteria} implies \eqref{post-blowup-well}.
\end{proof}

\subsection{After Blow-up: Dealing with the Post-blowup Dirac Mass}\label{subsec:postbl}

At the continuous level, the continuation of dynamics after a blow-up is theoretically straightforward: the system restarts with a mixed initial profile containing a smooth density and a Dirac mass at $V_R$, as derived in \eqref{a1z-pobl-p}. However, resolving this singular initial data on a fixed Eulerian mesh presents a numerical challenge. The Dirac mass represents a sub-grid scale structure; simply projecting it onto the grid points via $$p_{i_R} \leftarrow p_{i_R} + \Delta\tau_h/h$$ introduces severe discretization errors and can destabilize the subsequent time-stepping.

To address this multiscale incompatibility, we adopt a {hybrid semi-analytical decomposition}. The core idea is to separate the singular component from the regular distribution and evolve it analytically during its initial diffusion phase. Specifically, we rely on the observation that, in the short time following the reset, the evolution of the Dirac mass is dominated by local diffusion and drift, closely resembling the Green's function of the Fokker-Planck operator on the free space.

Accordingly, we decompose the total probability density into two components:
\begin{enumerate}
    \item A \textbf{grid-based density} $(p_i^m)$ that tracks the regular part of the population.
    \item A \textbf{Lagrangian Gaussian packet} parameterized by its mass $\mathfrak{m}$, mean $\beta(t)$, and variance $\gamma(t)$, which approximates the spreading of the reset population.
\end{enumerate}
This Gaussian packet is evolved via a set of Ordinary Differential Equations (ODEs) derived from the exact moments of the Fokker-Planck equation. It remains distinct from the grid until it has diffused sufficiently to be resolved by the mesh size $h$ (i.e., when $\sqrt{\gamma} \sim h$), at which point it is seamlessly re-projected onto the Eulerian grid.

\paragraph{Derivation}
Suppose the solution blows up at time $t_*$. The the restart dynamics after the blow-up can be stated as follows
\begin{align}\label{eq:fp-classical-rs}
            \p_{t}p+\p_v([-v+bN(t)]p)&=a\p_{vv}p+N(t)\delta_{v=V_R},\qquad &t>t_*,v<V_F,\\\label{zerobc-classical-rs}
p(t,V_F)&=0,\qquad N(t)=-a\p_vp(t,V_F)\geq 0,\qquad &t>t_*,\\ p((t_*)^+,v)&=\tilde{p}(\Delta\tau,v)+\Delta\tau \delta_{v=V_R}\qquad &v\leq V_F,\label{ic:fp-classical-rs}
\end{align}We consider the decomposition $p(t,v)=p_1(t,v)+p_2(t,v)$, where $p_1$ and $p_2$ are govern by (c.f. \cite[Section 3.2]{dou2024noisy} and \cite{JGLiuZZ22} for decompositions of similar spirits)
\begin{equation}
    \begin{dcases}
        \p_{t}p_1+\p_v([-v+bN(t)]p_1)=a\p_{vv}p_1+N_1(t)\delta_{v=V_R},\qquad t>t_*,v<V_F,\\
        p_1(t,V_F)=0,\qquad N_1(t):=-a\p_vp_1(t,V_F)\geq 0,\qquad t>t_*,\\ p_1((t_*)^+,v)=\tilde{p}(\Delta\tau,v),\qquad v\leq V_F,
    \end{dcases}
\end{equation} and 
\begin{equation}
    \begin{dcases}
        \p_{t}p_2+\p_v([-v+bN(t)]p_2)=a\p_{vv}p_2+N_2(t)\delta_{v=V_R},\qquad t>t_*,v<V_F,\\
        p_2(t,V_F)=0,\qquad N_2(t):=-a\p_vp_1(t,V_F)\geq 0,\qquad t>t_*,\\ p_2((t_*)^+,v)=\Delta\tau \delta_{v=V_R},\qquad v\leq V_F,
    \end{dcases}
\end{equation} The two equations are almost the same except for the initial data. They are coupled by the total firing rate $N(t)$, which can also be decomposed as
\begin{equation}
    N(t)=N_1(t)+N_2(t).
\end{equation}

Now we apply approximations. Intuitively, the profile of $p_2$ shall be localized in space/voltage and away from $V_F$ in the short time, since it starts with a Dirac mass at $V_R$. This inspires us to consider two approximations in the short time. First, we assume the contribution of $p_2$ to the total firing rate is negligible
\begin{equation}
    N_2(t)\approx 0,\qquad N(t)\approx N_1(t).
\end{equation} This makes $p_1$ satisfy a self-contained equation (no longer dependent on $p_2$, can be solved independently).
\begin{equation}
    \begin{dcases}
        \p_{t}p_1+\p_v([-v+bN_1(t)]p_1)=a\p_{vv}p_1+N_1(t)\delta_{v=V_R},\qquad t>t_*,v<V_F,\\
        p_1(t,V_F)=0,\qquad N_1(t):=-a\p_vp_1(t,V_F)\geq 0,\qquad t>t_*,\\ p_1((t_*)^+,v)=\tilde{p}(\Delta\tau,v),\qquad v\leq V_F,
    \end{dcases}
\end{equation} Secondly, for the same reason, we assume in the short time the evolution of $p_2$ can be well-approximated by the Fokker-Planck equation on the whole space
\begin{equation}
    \begin{dcases}
        \p_{t}p_2+\p_v([-v+bN(t)]p_2)=a\p_{vv}p_2,\qquad t>t_*,v\in\mathbb{R},\\ p_2((t_*)^+,v)=\Delta\tau \delta_{v=V_R},\qquad v\leq V_F,
    \end{dcases}
\end{equation} which can be (semi-)explicitly solved as (e.g. \cite{risken1996fokker}) the following evolving Gaussian function 
\begin{equation}\label{formula-p2}
    p_2(t,v)=\frac{\Delta\tau}{\sqrt{2\pi \gamma(t)}}\exp\left(-\frac{(v-\beta(t))^2}{2\gamma(t)}\right),\qquad t>t^*, v\in\mathbb{R},
\end{equation} where the mean $\beta(t)$ and the variance $\gamma(t)$ satisfy the following ODE system
\begin{equation}\label{ODE-Gaussian}
    \begin{dcases}
        \frac{d}{dt}\beta(t)=bN(t)-\beta(t),\qquad t>t_*,\\
        \frac{d}{dt}\gamma(t)=2a-2\gamma(t),\qquad t>t_*,\\
    \beta(t_*)=V_R,\qquad\qquad \gamma(t_*)=0.
    \end{dcases}
\end{equation}
Indeed, the core assumption of both approximations is $N_2(t)\approx 0$, which is intuitively valid in the short time.

\paragraph{Schemes}
Now we detail the numerical schemes based on the above decomposition. Instead of representing the solution by just a vector $(p^m_i)_{i=1}^{n-1}$, we extend the representation to $({\mathbf{p}}^m,\mathfrak{m},\beta^m,\gamma^m)$, where the latter three variables represent the evolving Gaussian \eqref{formula-p2}: $\mathfrak{m}$ for its mass, $\beta^m$ for the mean and $\gamma^m$ for the variance. Such a representation is initialized as
\begin{equation}\label{init-putback}
p^0_i:=\tilde{p}_i^k,\quad \mathfrak{m}:=\Delta\tau_h,\quad \beta^0=V_R,\quad \gamma^0=0,
\end{equation} where $\tilde{p}_i^k$ and $\Delta\tau_h$ are taken from the last step of the time dilation dynamics \eqref{stop-blowup-criteria}-\eqref{def-tauh-a1z}. In particular the stop criteria for the blow-up \eqref{stop-blowup-criteria} is met.

At each step, the vector $(p^m_i)_{i=1}^{n-1}$ is evolved in the exactly same way as described in Section \ref{subsubsec:cl-a1z}, according to which we can compute the firing rate $N_h^m$. The mass of the Gaussian $\mathfrak{m}$ will remain constant during the evolution. For $\beta^m$ and $\gamma^m$, we numerically solve the ODE \eqref{ODE-Gaussian} with the exponential-time-integral method
\begin{equation}\label{ODE-bg-scheme}
    \begin{dcases}
        \beta^{m+1}=e^{-\delta t}\beta^m+(1-e^{-\delta t})bN_h^m,\\
        \gamma^{m+1}=e^{-2\delta t}\gamma^m+(1-e^{-2\delta t})a.
    \end{dcases}
\end{equation}

\paragraph{Criteria for Re-projection}
The semi-analytical phase is transient. The Lagrangian packet is assimilated back into the Eulerian density vector $\mathbf{p}^m$ as soon as one of the following two conditions is met, ensuring both numerical accuracy and physical validity.

\begin{enumerate}
    \item \textbf{Resolution Sufficiency:} The primary criterion for re-projection is geometric. Once the variance of the Gaussian packet becomes comparable to the grid resolution, the packet is sufficiently smooth to be resolved by the fixed mesh without introducing significant aliasing errors. We define this threshold as:
    \begin{equation}\label{cr-2}
        \gamma^m \geq C_{\text{res}} h^2,
    \end{equation}
    where $C_{\text{res}}$ is a constant of order $O(1)$. Since the variance grows linearly as $\gamma(t) \sim 2at$, and the time step typically scales as $\delta t \sim O(h^2)$, this condition is usually satisfied within a few time steps.

    \item \textbf{Boundary Interaction:} The analytical approximation \eqref{formula-p2} assumes evolution in free space, ignoring the absorbing boundary at $V_F$. If the tail of the Gaussian packet approaches the threshold significantly, this assumption breaks down. To prevent physical inconsistency, we enforce re-projection if the flux contribution becomes non-negligible:
    \begin{equation}\label{cr-1}
         \frac{\mathfrak{m}}{\sqrt{2\pi \gamma^m}}\exp\left(-\frac{(V_F-\beta^m)^2}{2\gamma^m}\right) \geq \epsilon,
    \end{equation}
    where $\epsilon$ is a small tolerance parameter.
\end{enumerate}

Upon triggering either criterion, the Gaussian mass $\mathfrak{m}$ is mapped onto the grid points $v_i$. To enforce exact mass conservation at the discrete level, we introduce a normalization factor $\mathcal{Z}$:
\begin{equation}\label{put-back-1}
    p^m_i \leftarrow p^m_i + \frac{\mathfrak{m}}{\mathcal{Z}} \frac{1}{\sqrt{2\pi \gamma^m}}\exp\left(-\frac{(v_i-\beta^m)^2}{2\gamma^m}\right),
\end{equation}
where $\mathcal{Z} := \sum_{j=1}^{n-1} \frac{1}{\sqrt{2\pi \gamma^m}}\exp\left(-\frac{(v_j-\beta^m)^2}{2\gamma^m}\right) h$. This step completes the cycle, returning the system fully to the macroscopic representation described in Section \ref{subsubsec:cl-a1z}.

    Theoretically, a subsequent blow-up event could occur before these re-projection criteria are met. In such a scenario, the algorithm can be naturally extended to track multiple Gaussian packets simultaneously. However, in all numerical regimes explored in this work, the rapid regularization of the Dirac mass ensures that re-projection occurs well before the onset of the next synchronization event.

\subsection{Summary of the Algorithmic Protocol}

We conclude this section by synthesizing the proposed multiscale framework into a cohesive algorithmic protocol. The solver operates on a spatially uniform mesh over the truncated domain $[V_{\min}, V_F]$ and alternates between three distinct phases.

\begin{enumerate}
    \item \textbf{Phase I: Macroscopic Integration ($t$-scale).} 
    In the sub-critical regime, the density vector $\mathbf{p}^m$ is evolved using the semi-implicit structure-preserving scheme \eqref{linear-choice-1}. 
    \begin{itemize}
        \item \textit{Monitoring:} At each step, we check the mesh-independent stability criterion \eqref{numerical-dbcl}: $b p_{n-1}^m < 1$.
        \item \textit{Transition:} If the criterion is violated, the macroscopic evolution is suspended, and the solver transitions to Phase II, initializing the mesoscopic state.
    \end{itemize}

    \item \textbf{Phase II: Mesoscopic Resolution ($\tau$-scale).} 
    The dynamics are lifted to the dilated timescale to resolve the singularity.
    \begin{itemize}
        \item \textit{Evolution:} The density $\tilde{\mathbf{p}}^k$ is updated via the exact upwind shift \eqref{ic-n-a1z}, while the auxiliary mass $M^k$ and time $\tau^k$ are integrated via \eqref{n-ode-m-a1zero}--\eqref{n-tau-a1z}.
        \item \textit{Transition:} The phase concludes when the look-ahead termination criterion \eqref{stop-blowup-criteria} is met. The blow-up size $\Delta \tau_h$ is computed, and the solver transitions to Phase III.
    \end{itemize}

    \item \textbf{Phase III: Hybrid Assimilation (Transition).} 
    The system restarts in the $t$-scale with a hybrid representation: a grid-based density and a Lagrangian Gaussian packet.
    \begin{itemize}
        \item \textit{Evolution:} The density vector evolves as in Phase I, while the Gaussian packet parameters $(\beta, \gamma)$ evolve via the ODEs \eqref{ODE-bg-scheme}.
        \item \textit{Assimilation:} Once the packet satisfies the resolution condition $\gamma \geq C_{\text{res}} h^2$ or interacts with the boundary, it is re-projected onto the Eulerian grid via the conservative mapping \eqref{put-back-1}. The algorithm then fully reverts to Phase I.
    \end{itemize}
\end{enumerate}

\begin{remark}[Local vs. Global Dilated Time]
    It is important to clarify that in the algorithmic implementation, the $\tau$ timescale is employed \textit{locally}: specifically, we initialize the time-dilation dynamics at $\tau=0$ whenever the solver enters Phase II. This local resetting isolates each blow-up event numerically, simplifying the transport logic. 
    
    However, to visualize the long-term dynamics continuously (as will be shown in Section \ref{sec:experiments}), we construct a cumulative \textit{global} dilated time $\tau_{\text{global}}$. The global clock advances by $N_h \delta t$ during the macroscopic phases and by $\delta\tau$ during the mesoscopic phase, providing a unified timeline that resolves the entire history of multiple firing events.
\end{remark}

\section{Numerical Experiments and Explorations}\label{sec:experiments}



In this section, we present a series of numerical experiments to validate the multiscale time-dilation scheme and to explore the complex dynamics of the mean-field FP equations \eqref{eq:intro-fp}. Our investigation is structured to address both the algorithmic performance and the underlying physical modeling.

In Section \ref{pdedemo}, we first conduct a systematic test of the PDE solver. By varying the connectivity parameter $b$, we demonstrate the solver's capability to capture different dynamical regimes, ranging from steady-state distributions to periodic synchronized bursts (MFEs). These tests serve as a verification of the numerical consistency and robustness of the time-dilation framework in handling the inherent singularities of the model dynamics.

In Section \ref{sec:comparison}, we then focus comparing the mean-field PDE system and the finite-size particle system. Our goal is to move beyond simple validation and delve into two fundamental questions:
\begin{enumerate}
    \item \textbf{Validity Range:} Under what conditions (e.g., population size $N_p$ and connectivity $b$) does the mean-field PDE serve as a reliable approximation for the discrete particle dynamics, and where do the finite-size effects lead to qualitative departures?
    \item \textbf{Quantitative Advantages:} In regimes where the mean-field approximation is valid, what are the specific advantages of the PDE solver over the particle solver in terms of eliminating statistical noise and enhancing computational efficiency, especially near critical blow-up points?
\end{enumerate}

Through these explorations, we aim to provide a comprehensive picture of how the multiscale PDE approach facilitates a deeper understanding of the neuronal population dynamics that is otherwise computationally prohibitive for traditional particle-based methods.

\subsection{Numerical Tests on PDE Solver}\label{pdedemo}

In this subsection, we validate the effectiveness of the proposed multiscale numerical framework based on time-dilation in handling the mean-field Integrate-and-Fire model through a series of representative numerical experiments. We investigate the dynamical evolution of the system under different excitatory coupling strengths $b$, aiming to demonstrate that the algorithm not only accurately captures the classic steady-state distribution but also achieves the proper physical extension of solutions through automatic time-scale switching when the firing rate $N(t)$ undergoes a singularity (blow-up). These experiments cover various dynamical regimes ranging from steady-state convergence without blow-ups to single blow-ups and to complex periodic multiple firing events, thereby systematically evaluating the algorithm's robustness in handling discontinuous transitions and mass conservation.

Our numerical setting is consistent with previous theoretical results. For $b>0$ large enough, every solution blows up \cite{caceres2011analysis,roux2021towards}. While for $b$ small but positive, both the finite time blow-up and the global existence are possible depending on the initial data \cite{caceres2011analysis,carrillo2013CPDEclassical,DelarueIRT2015AAP,roux2021towards}.

\subsubsection{Sub-critical Regime: Convergence to Steady State}
We begin by presenting the numerical results for $0\leq b < 1$. For this case, an appropriate initial condition was chosen so that the PDE system does not blow up. The membrane potential distribution converges to a steady state after a transient. Here, the cases $b = 0$ and $b = 1/2$ are presented. For both cases, the total simulation time was set to $10$. 

Numerical simulations show that regardless of the initial distribution, the system eventually converges to a continuous steady-state density distribution $p(v)$. The corresponding firing rate $N(t)$ gradually tends toward a constant after initial fluctuations, indicating that the system enters a steady-state, low population firing mode. The auxiliary function $M(\tau)$ remains zero throughout the evolution process (see the bottom right panels of Fig.~\ref{figpdeb0} and Fig.~\ref{figpde1/2}). This confirms that the system does not blow up  in this scenario. 
\begin{figure}[H]
    \centering
    \includegraphics[width=0.7\linewidth]{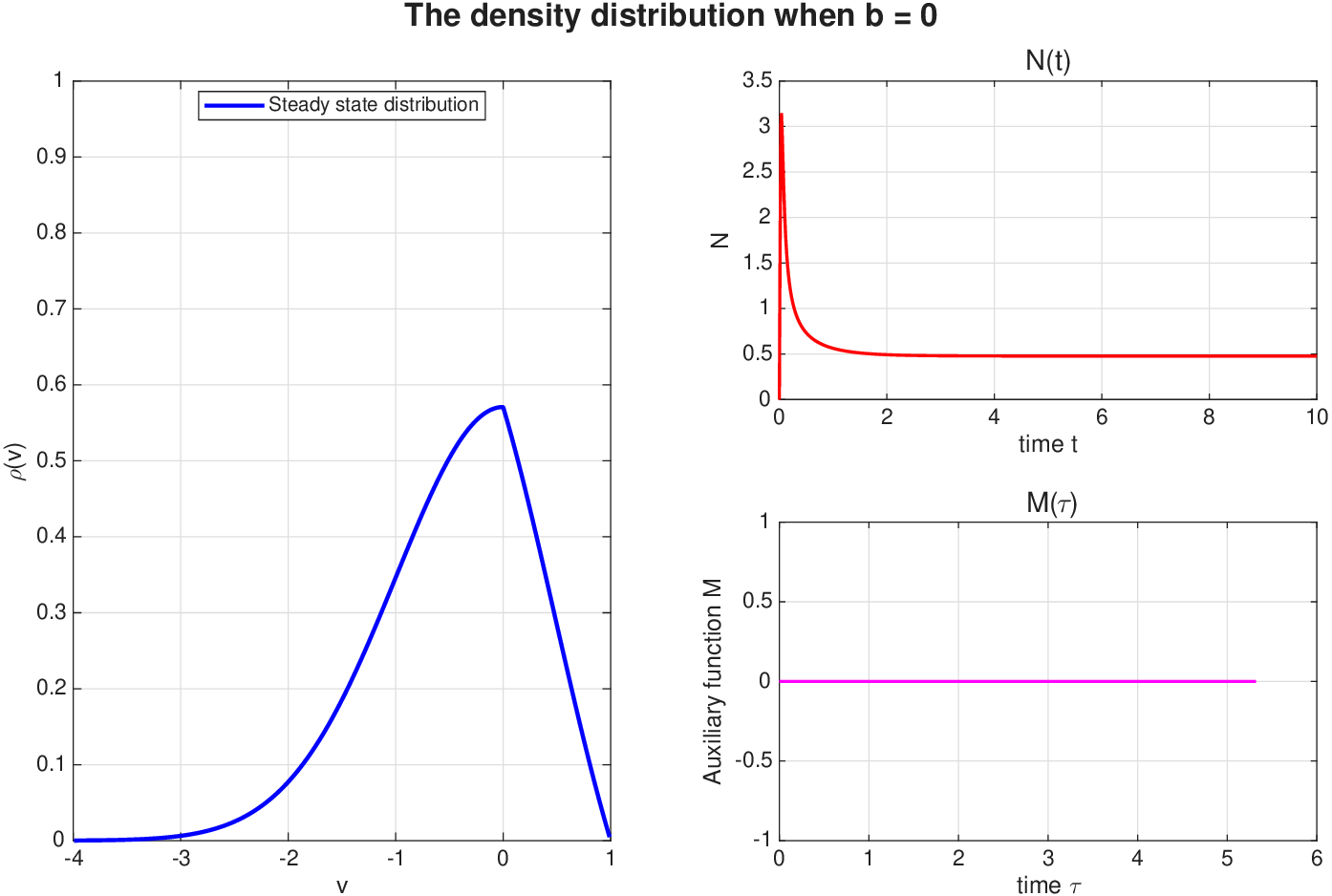}
    \caption{The case $b = 0$. The left column displays the steady-state density profiles; the top-right panels show the corresponding $N(t)$, and the bottom-right panels present the auxiliary function $M(\tau)$ on the $\tau$-time scale.}
    \label{figpdeb0}
\end{figure}
\begin{figure}[H]
    \centering
    \includegraphics[width=0.7\linewidth]{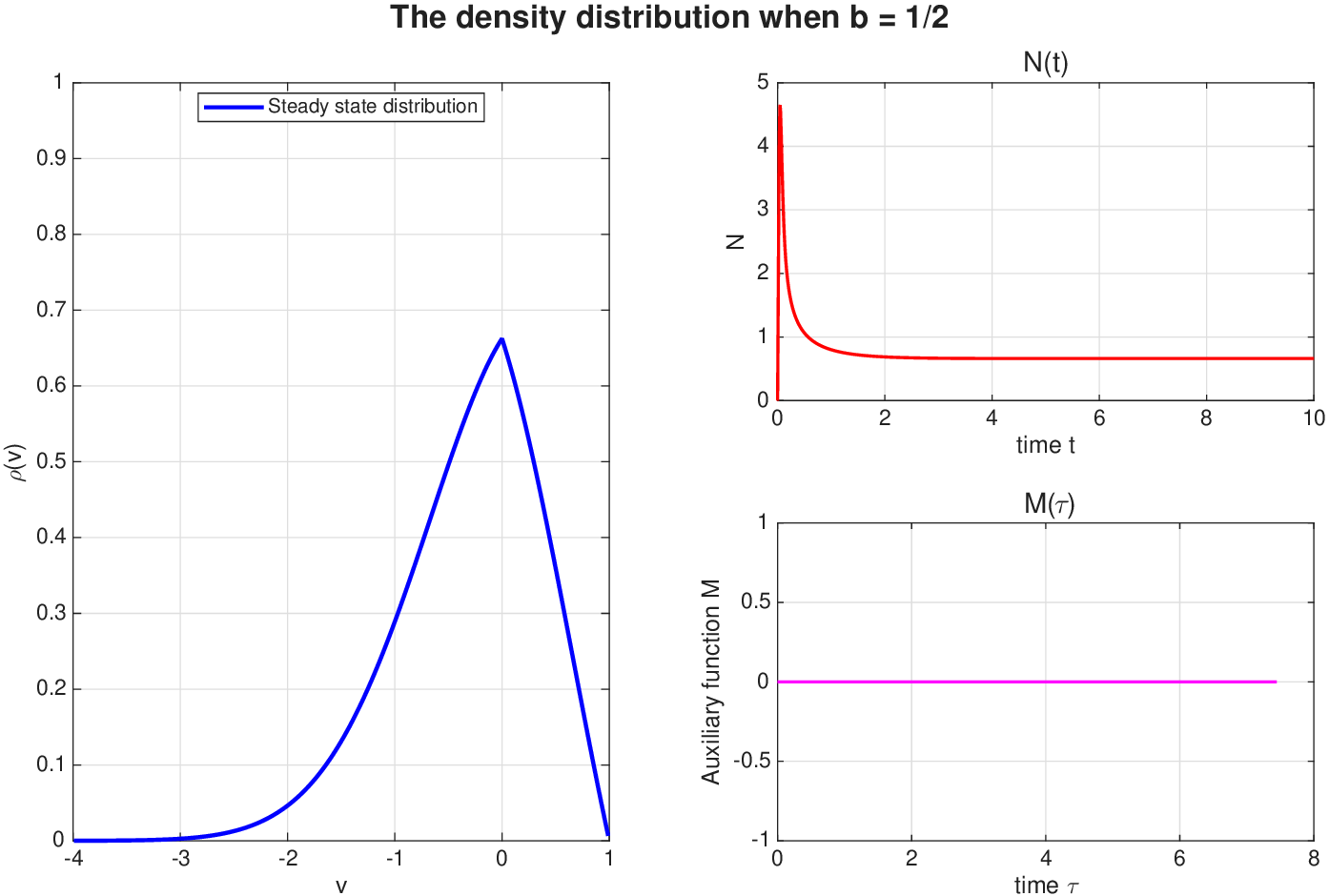}
    \caption{The case $b = 1/2$. The left column displays the steady-state density profiles; the top-right panels show the corresponding $N(t)$, and the bottom-right panels present the auxiliary function $M(\tau)$ on the $\tau$-time scale.}
    \label{figpde1/2}
 \end{figure}

\subsubsection{Critical Regime: Single MFE and Re-stabilization}
When the excitation strength increases to $b=1$, the system exhibits dynamical characteristics significantly different from the low-excitation case. Starting from the same initial condition for $b < 1$, this same datum induces a single blow-up when $b = 1$. When the PDE blows up, $N(t)$ becomes infinite; for better visualization, its value is set to $1000$ at those instants.

\begin{figure}[H]
    \centering
    \includegraphics[width=0.8\linewidth]{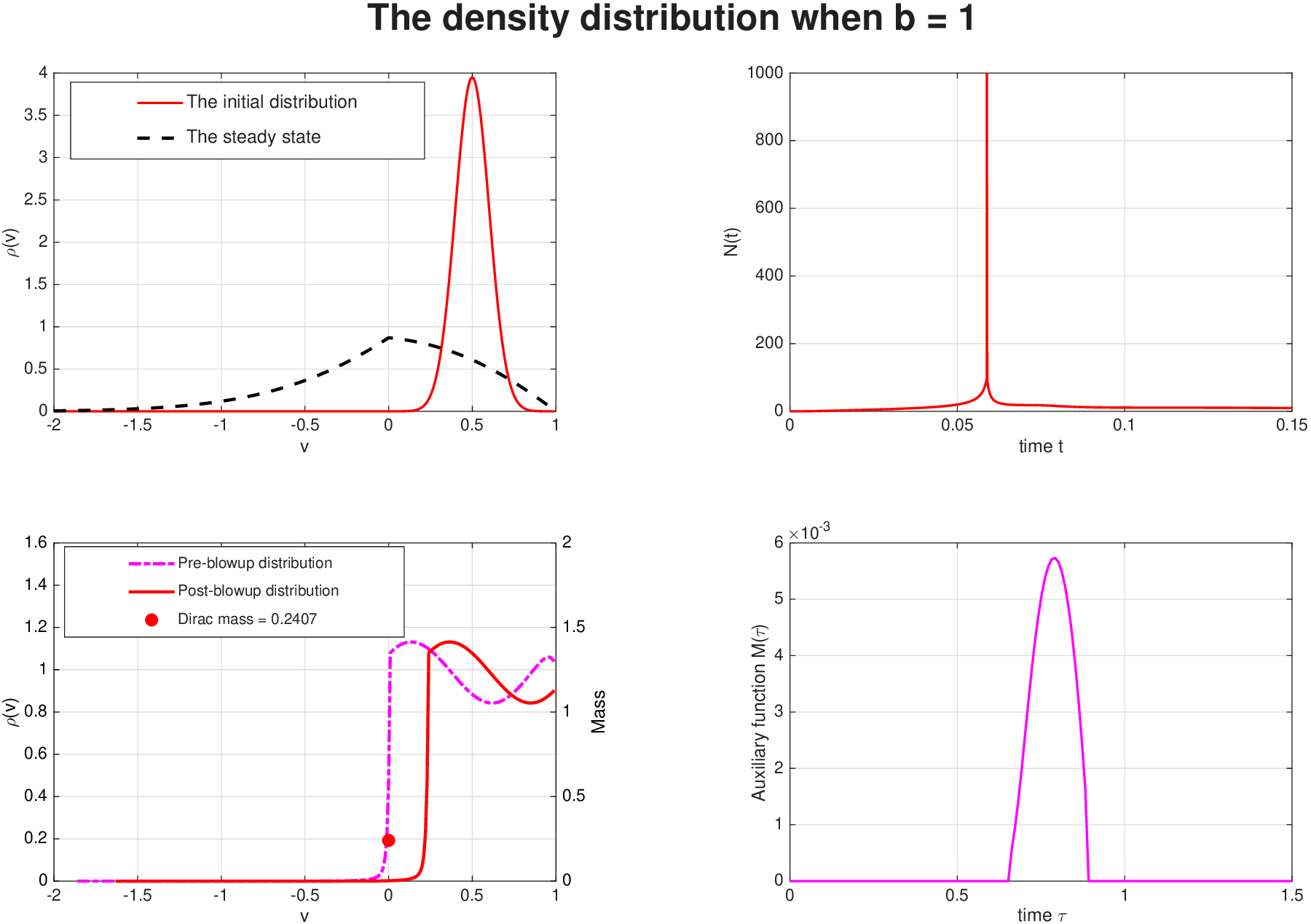}
    \caption{Evolution of the probability density $p(v, t)$ in the critical regime ($b = 1$). This figure illustrates the process where the system concentrates from a smooth initial distribution toward the threshold $V_F = 1$ at the critical connectivity strength. Driven by excitatory feedback, the system triggers a single synchronization. The sharp changes in density during the explosion are precisely captured via the time-dilation technique. Following the synchronization, the system returns to a stable regime, with the probability density gradually approaching a new steady-state distribution. Top-left: The initial distribution and the steady state distribution. Down-left: The pre-blowup and post-blowup distribution and the corresponding Dirac mass. Top-right: The profile of $N(t)$. Down-right: The profile of auxiliary function $M(\tau)$.} 
    \label{figpdeb1}
\end{figure}
At a specific instant early in the evolution, the rapid accumulation of potential near the firing threshold $V_F$ causes $N(t)$ to tend toward infinity. At this point, the algorithm successfully identifies the blow-up and automatically switches to the dilated time scale $\tau$. Under the dilated scale, the originally instantaneous synchronized firing activity blow-up is expanded into a continuous evolution. The algorithm accurately calculates the blow-up mass $\Delta m$ and forms a Dirac mass at the reset potential $V_R$. After the blow-up ends, the system returns to the natural time scale $t$. The Dirac mass gradually smooths out under the effect of diffusion, and the system eventually stabilizes at a new steady state after undergoing a significant reorganization, verifying the algorithm's ability to handle a single blow-up.

\subsubsection{Super-critical Regime: Periodic Synchronization Patterns}
With strong excitatory coupling (e.g., $b=2$ and $b=10$), the system enters a sustained synchronous periodic oscillation state. We continue to use the same initial condition for $b>1$, and periodic blow-up solutions emerge; here, the cases $b = 2$ and $b = 10$ areare presented. 

Fig.~\ref{fig:pdeb2} is for the case $b=2$ and shows that the density profiles immediately before the second and third blow-ups are virtually identical, and the auxiliary function $M(\tau)$ becomes periodic starting from the second blow-up. The panel also displays the post-blowup Dirac mass together with the particle density; their masses sum to $1$, which agrees with the theoretical guarantee of mass conservation of our algorithm.

Same can be said to Fig.~\ref{fig:pdeb10} which is for the case $b=10$. A notable difference is in the post-blowup profile: the magnitude of the Dirac mass part is now equal to $1$ while the value of the density function is $0$. This indicates a full synchronization where the whole population is involved, which is stronger than the case $b=2$. 

Results show that the firing rate $N(t)$ presents regular blow-up pulses. Each singularity of $N(t)$ corresponds to the synchronous firing and resetting of a large portion of the neuron population. Faced with such high-frequency singularity transitions, the numerical framework demonstrates high stability. The algorithm accurately captures the occurrence time and magnitude of each MFE and handles the subsequent formation and diffusion of the Dirac mass. The periodic ``bottoming out" and recovery of the $M(\tau)$ function reveals the cyclic mechanism between internal charge accumulation and synchronous reset within the system (see charts related to $b=10$). 
\begin{figure}[H]
    \centering
    \includegraphics[width=0.8\linewidth]{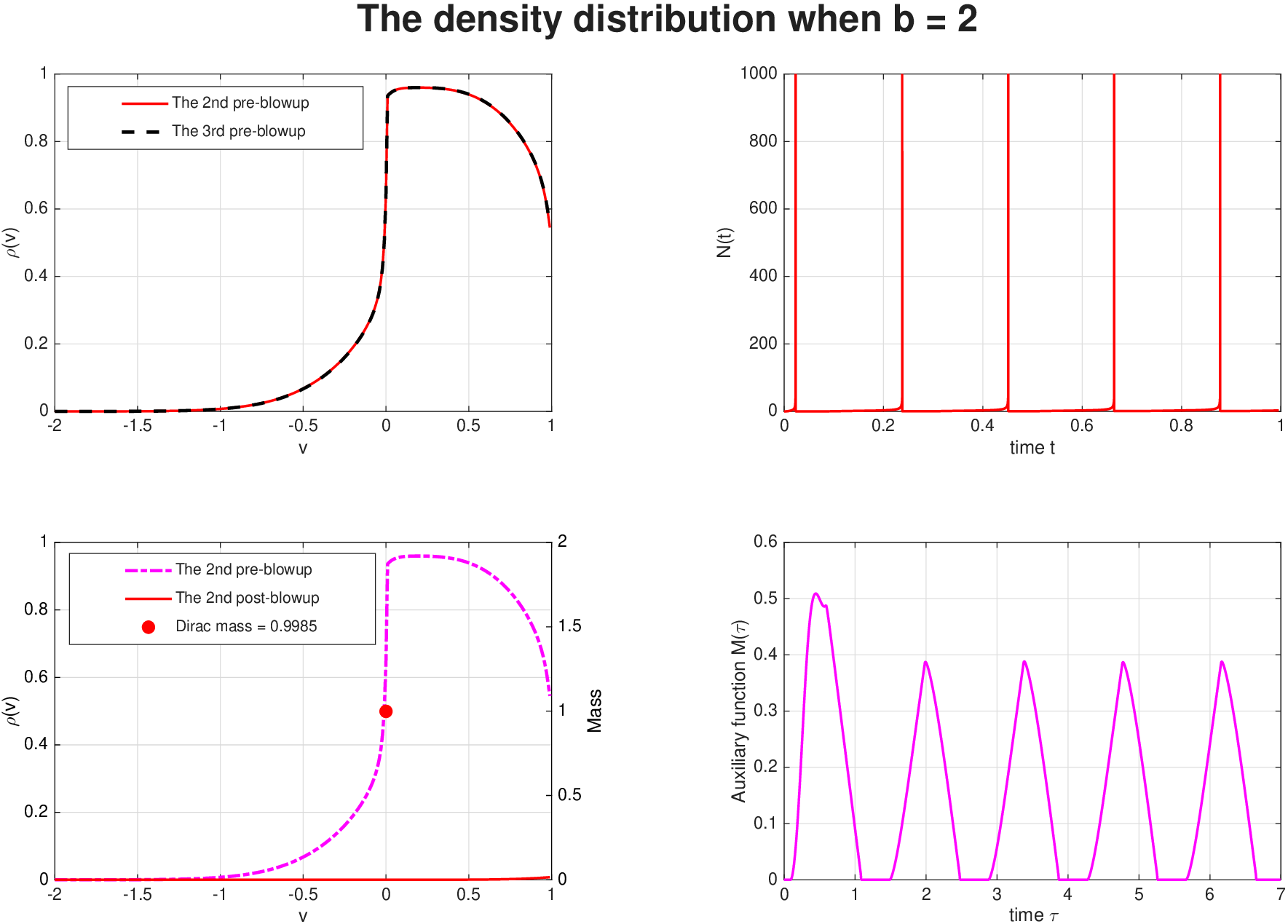}
    \caption{Dynamical behavior of the probability density $p(v,t)$ at moderate connectivity strength ($b=2$). As the connectivity strength increases to $b=2$, the system exhibits more pronounced nonlinear characteristics. The figure demonstrates the effectiveness of the numerical algorithm in handling the formation and evolution of the Dirac mass at the moment of blow-up, reflecting the rapid recovery process of the system after a massive discharge event. Top-left: The second and the third pre-blowup profiles. Down-left: The second pre-blowup and post-blowup distribution and the corresponding Dirac mass. Top-right: The profile of $N(t)$. Down-right: The profile of auxiliary function $M(\tau)$.}
    \label{fig:pdeb2}
\end{figure}

\begin{figure}[H]
    \centering
    \includegraphics[width=0.8\linewidth]{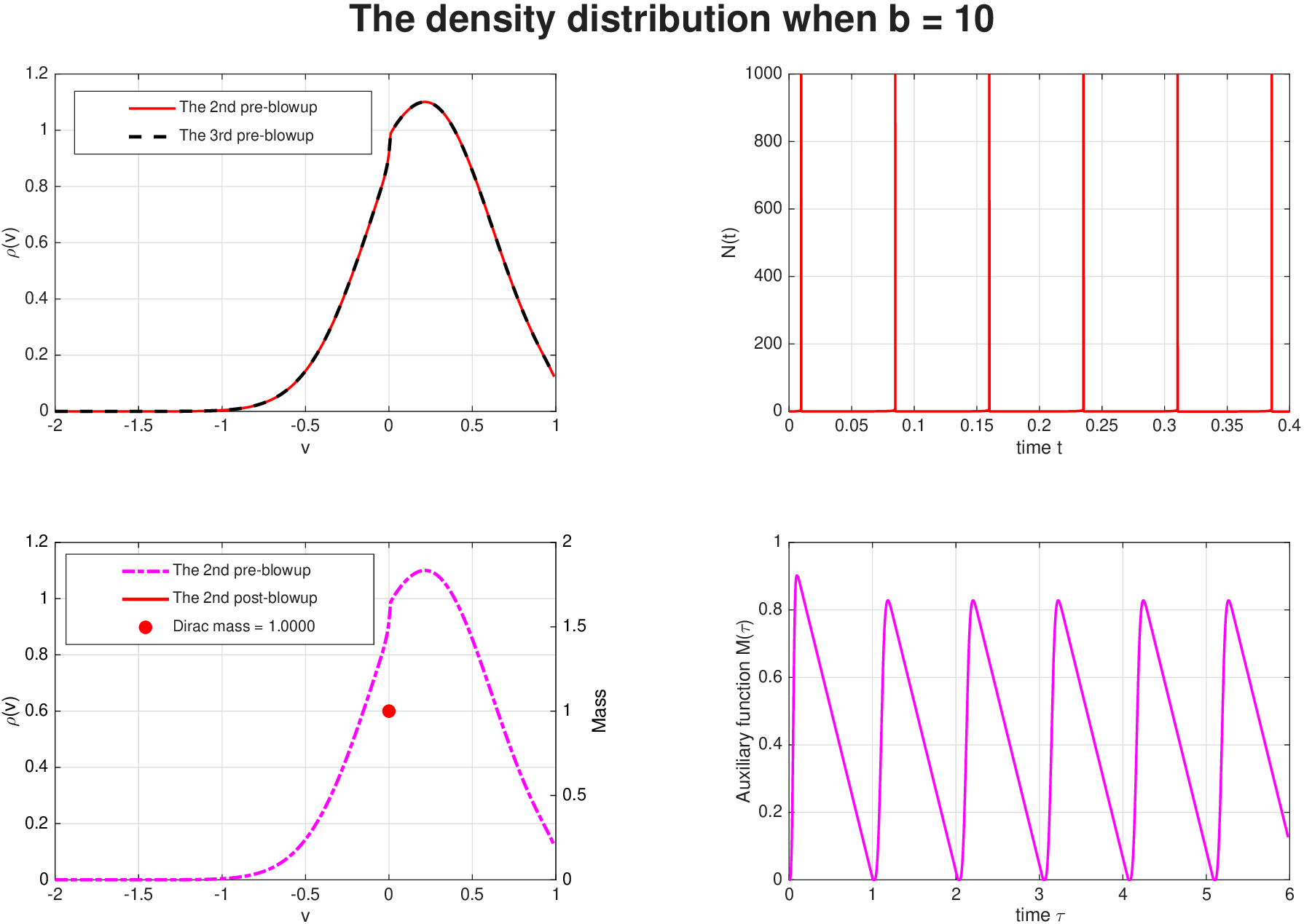}
    \caption{Phenomenon of multiple synchronizations in the strong coupling regime ($b = 10$). At high connectivity strength, the system enters a strongly synchronized firing mode. This figure displays the cyclic dynamics of the probability density $p(v, t)$ through successive explosions and resets. Due to the large value of $b$, the feedback from a single discharge is sufficient to trigger subsequent blow-up events within a short time frame. The evolution path clearly shows the ``blowup-recovery-blowup" cycles, validating the capability of our multiscale framework to handle multiple singularities. Top-left: The second and the third pre-blowup profiles. Down-left: The second pre-blowup and post-blowup distribution and the corresponding Dirac mass. Top-right: The profile of $N(t)$. Down-right: The profile of auxiliary function $M(\tau)$.}
    \label{fig:pdeb10}
\end{figure}


\subsection{Comparison between PDE Solver and Particle Solver}\label{sec:comparison}

In this section, we provide a comprehensive quantitative assessment of the proposed mean-field PDE solver against the microscopic particle simulation. Our objective is two-fold: first, to {investigate} the convergence of the particle system to the mean-field limit as $N_p \to \infty$; and second, to demonstrate the superior computational efficiency and robustness of the PDE solver as an approximation tool, particularly in regimes where the microscopic dynamics becomes numerically stiff.

{Theoretical results on the limit $N_p \to \infty$ is only qualitative \cite{delarue2015particle}, and there is no rigorous estimates on the quantitative discrepancy between the finite-size particle system and the mean-field limit.} We expect it to scales as $O(C N_p^{-\alpha})$ and conjecture that the prefactor $C$ depends critically on the coupling strength $b$ and the network activity $N(t)$. This dependence motivates our experimental design across different regimes:
\begin{itemize}
    \item \textbf{Weak Coupling ($b=1/2$):} The system remains regular. We expect $C$ to be small, allowing the particle system to converge rapidly with standard time steps.
    \item \textbf{Strong Coupling ($b=2, 10$):} The system exhibits periodic synchronization. In this regime, the microscopic dynamics become extremely stiff near the blow-up moments, requiring the particle solver to employ time steps significantly smaller than $1/N_p$ to resolve the fast cascade. In contrast, our PDE solver, empowered by the time-dilation transformation, resolves these singularities using reasonably small time steps, highlighting its advantage as a robust approximation.
    \item \textbf{Critical Regime ($b=1$):} The system sits on the verge of a single blow-up, representing the most sensitive case for finite-size effects.
\end{itemize}

To quantify the agreement, we employ the accumulated $L^1$-error of the first-order moment (population-averaged voltage) over the simulation interval $[0, T]$. The first moments {for the PDE and for the particle system} are defined as, respectively:
\begin{equation}\label{eq:moment_def}
    L_{\rm{pde}}(t) = \int_{V_{\min}}^{V_F} v p(v,t) \,dv, \qquad L_{\rm{ps}}(t) = \frac{1}{N_p} \sum_{i=1}^{N_p} V_t^i.
\end{equation}
The accumulated error metric is given by:
\begin{equation}\label{eq:L_error_def}
    L_{\rm{error}} := \int_0^T |L_{\rm{pde}}(t) - L_{\rm{ps}}(t)| \, dt \approx \sum_{m} |L_{\rm{pde}}^m - L_{\rm{ps}}^m| \Delta t.
\end{equation}
We deliberately choose this time-integrated metric because it is {less sensitive to the discrepancy of the synchronization time, compared to the $L^{\infty}_t$ norm. As the synchronization is a discontinuous event in time, a small mismatch in its time can lead to an $O(1)$ error, if we use the $L^{\infty}_t$ norm in time. Using this time-integrated metric ensures that an $O(\eps)$ discrepancy in period still leads to an $O(\eps)$ error. Indeed, the failure of the $L^{\infty}_t$ norm to capture the temporal error of a discontinuous event echoes the motivation for Skorokhod to introduce the M1 topology \cite{skorokhod1956limit}, which is extensively used in the related analysis \cite{delarue2015particle}.}

Unless otherwise specified, the time step for the particle simulation is set to $\delta t = 1/N_p$, and we plot the first-order moment trajectories based on a single realization of the particle system. However, for small population sizes (e.g., $N_p=1000$) where stochastic fluctuations are pronounced, we calculate the ensemble average over multiple independent trajectories (realizations) to reduce random noise and isolate the systematic finite size effect. 

{Additionally, it worth noting that for all the tabulated particle results in Section~\ref{subsecb10} and ~\ref{subsecb2}, the particle system is simulated independently over $100$ runs with the same parameter set. For each synchronization event, we record the corresponding blow-up time in every run, and then compute the sample mean (denoted as PS mean) and sample variance (denoted as PS variance) of these blow-up times. In addition, we report the mean square error (MSE) between the particle blow-up times and the corresponding PDE blow-up times. Therefore, Tables~\ref{tab_time_b10_dt_1dN}-\ref{tab_time_b2_smalldt} provide a statistical comparison between the microscopic particle solver and the deterministic PDE solver, rather than the outcome of a single particle realization.}

\subsubsection{Weak Coupling Regime: Steady State Convergence ($b = 1/2$)}

We begin our analysis in the weak coupling regime ($b = 1/2$), where the microscopic particle model relaxes to a stable stationary state. This regime serves as a baseline to demonstrate how the PDE solver captures the collective behavior of the underlying physical system.

Figure \ref{fig:b1d2} illustrates the fundamental distinction between the two descriptions. The individual trajectories of the particle model (colored lines) inherently exhibit stochastic fluctuations around the mean-field limit. While the ensemble behavior approaches the deterministic limit as the population size $N_p$ increases, recovering a smooth profile from the microscopic model is computationally expensive, requiring the simulation of a massive number of particles to suppress the statistical noise.

In sharp contrast, the PDE solver directly computes the noise-free mean-field limit (black {solid} line). As shown in Table \ref{tab_b1d2_dt_1dN}, the discrepancy between the population average and the PDE solution scales as $O(N_p^{-1/2})$, consistent with the central limit theorem. This result highlights the significant computational advantage of the PDE approach: it bypasses the slow convergence of the Monte Carlo sampling and provides an accurate, instantaneous description of the system's collective state with minimal computational cost.

\begin{figure}[H]
    \centering
    \includegraphics[width=0.8\linewidth]{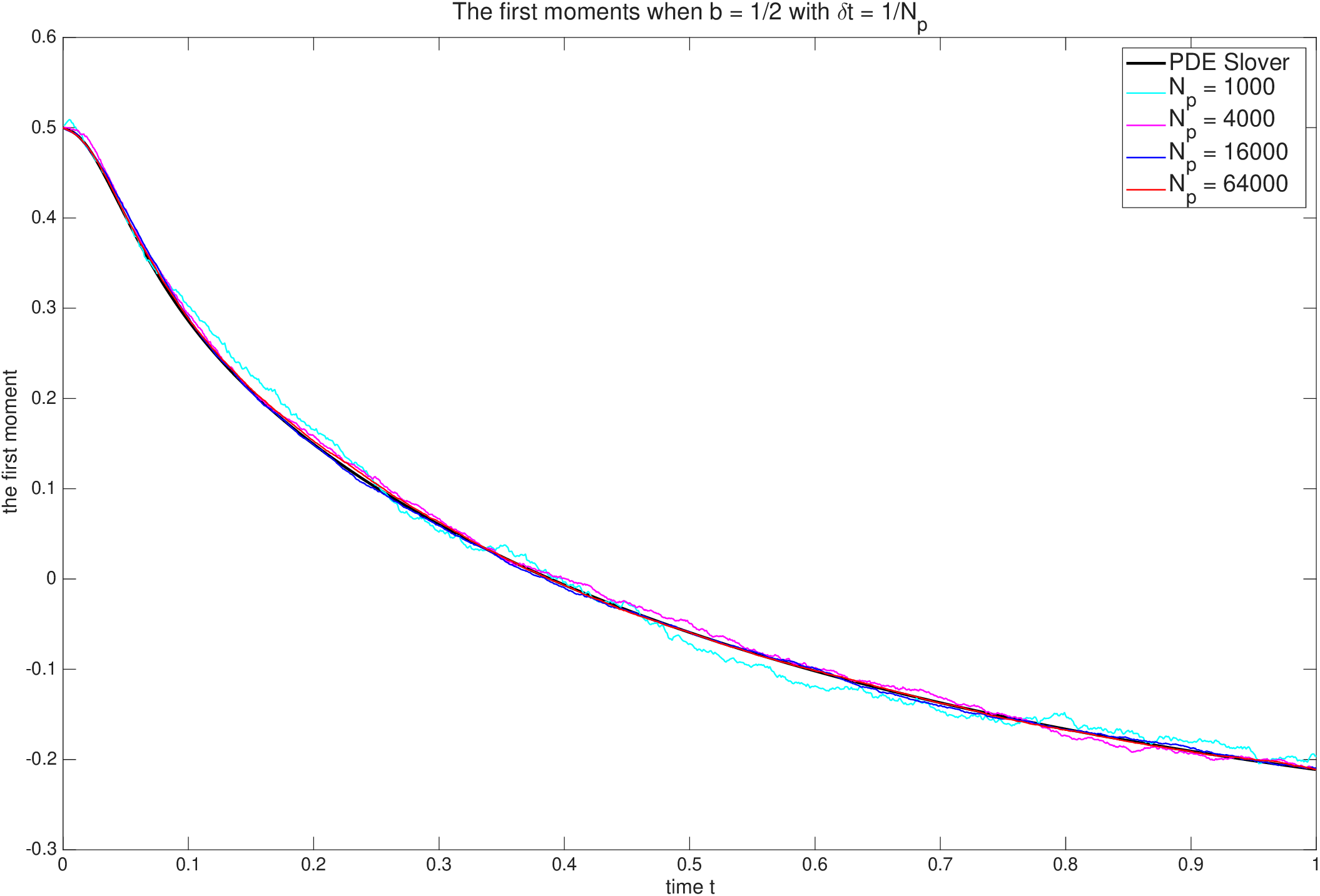}
    \caption{First-order moment trajectories for $b = 1/2$. The PDE solver (black solid line) directly captures the noise-free mean-field behavior, while the microscopic particle model (colored lines) exhibits inherent stochastic fluctuations that diminish only with increasing population size $N_p$.}
    \label{fig:b1d2}
\end{figure}

\begin{table}[htbp]
    \centering
    \caption{$L_{\rm{error}}$ of the first moment for $b=1/2$ with different population size $N_p$. The discrepancy reflects the finite-size fluctuations of the particle model, which decay at the rate $O(N_p^{-1/2})$ relative to the mean-field limit.}
    \begin{tabular}{|c|c|c|c|c|}
    \hline
    {$N_p$} & {$1,000$} & {$4,000$} & {$16,000$} & {$64,000$} \\ 
    \hline
    {$L_{\rm{error}}$} & {$9.34\times10^{-3}$} & {$4.97\times10^{-3}$} & {$2.12\times10^{-3}$} & {$1.53\times10^{-3}$} \\
    \hline
    \end{tabular}
    \label{tab_b1d2_dt_1dN}
\end{table}

\subsubsection{Strong Coupling Regime: Periodic Synchronization ($b = 10$)}\label{subsecb10}

We now turn to the strong coupling regime ($b = 10$), where the system exhibits high-frequency periodic MFEs. This regime provides a stringent test for the solvers due to the rapid timescale of the synchronization process.

\paragraph{Convergence and Numerical Stiffness}
First, we investigate the convergence behavior for large population sizes. In Table \ref{tab_time_b10_dt_1dN} and Figure \ref{fig:b10_1dN}, we observe that with the standard time step scaling $\delta t = 1/N_p$, the particle simulation exhibits a noticeable phase drift from the PDE solution after several cycles, even for $N_p = 256,000$. Specifically, the cumulative error in the blow-up times leads to a visible discrepancy in the third and fourth events. {Table~\ref{tab_time_b10_dt_1dN} quantifies this cumulative mismatch more precisely. Although the particle system still captures the qualitative periodic structure, the mean blow-up times are systematically delayed relative to the PDE prediction, and the discrepancy increases over successive events. Correspondingly, the mean squared error (MSE) also grows from one cycle to the next, indicating that the temporal error is not a purely local fluctuation but is accumulated dynamically through repeated synchronization. This behavior is fully consistent with the phase drift observed in Fig~\ref{fig:b10_1dN}.}

However, this discrepancy should not be misinterpreted as a failure of the mean-field approximation. Instead, it reveals the extreme \textit{numerical stiffness} of the particle dynamics during synchronization. When the time step is refined significantly beyond the standard scaling, the particle system converges precisely to the PDE limit. As demonstrated in Table \ref{tab_time_b10_smalldt} and Figure \ref{fig:b10_16e3_1_small_dt}, with a fixed $N_p=16,000$ and an extremely fine time step $\delta t = 1/(1.6\times10^7)$, the particle solver reproduces the PDE blow-up times and mean periods with negligible error. {This improvement is clearly reflected in the quantitative data of Table~\ref{tab_time_b10_smalldt}. Compared with Table~\ref{tab_time_b10_dt_1dN}, both the event-wise timing error and the corresponding MSE are reduced by a substantial margin, while the average period of the particle system becomes nearly indistinguishable from that of the PDE solver. Therefore, Table~\ref{tab_time_b10_smalldt} confirms that the discrepancy seen under the standard scaling is primarily a temporal discretization artifact, rather than a breakdown of the mean-field description itself.} Similarly, Figure \ref{fig:b10_16e3_small_dt} confirms that the trajectories align perfectly when temporal resolution is sufficient.

This comparison highlights a critical computational bottleneck for particle methods: to correctly resolve the fast cascade during synchronization, the time step must be prohibitively small, independent of the population size. In contrast, the PDE solver, leveraging the time-dilation transformation, resolves these singular events efficiently without strictly constrained time steps {($\delta t_{\rm{pde}} = 10^{-4}$)}.

\paragraph{State-Dependent Finite Size Effects}
Next, we examine the regime of small population sizes to understand the nature of the finite size effect. Figure \ref{fig:b10_1000_100seeds} displays the ensemble-averaged trajectory over $100$ runs for $N_p = 1000$ with a sufficiently small time step. A key observation is that the deviation from the PDE solution is highly non-uniform in time.
\begin{itemize}
    \item \textbf{Inter-burst intervals:} Between synchronization events, where the network activity $N(t)$ is low, the particle simulation aligns closely with the PDE solver, suggesting a small finite size effect.
    \item \textbf{Synchronization events:} As the system approaches a blow-up and $N(t)$ diverges, the discrepancy widens significantly.
\end{itemize}
This dynamic behavior supports our conjecture that the finite size effect is state-dependent, scaling with the instantaneous network activity. The fluctuations are "amplified" during the synchronization, making the mean-field approximation harder to recover exactly during the burst without a massive increase in $N_p$.

\paragraph{Summary}
In summary, the periodic synchronization regime reveals two distinct advantages of the PDE solver. First, it eliminates the {stiffness penalty}, as it does not require the microscopic time steps needed by the particle solver to capture the cascade. Second, it eliminates the {amplified finite size noise} during synchronization, providing a clean description of the limit cycle that would otherwise require computationally infeasible particle numbers to resolve. 

\begin{figure}[H]
    \centering
    \includegraphics[width=0.8\linewidth]{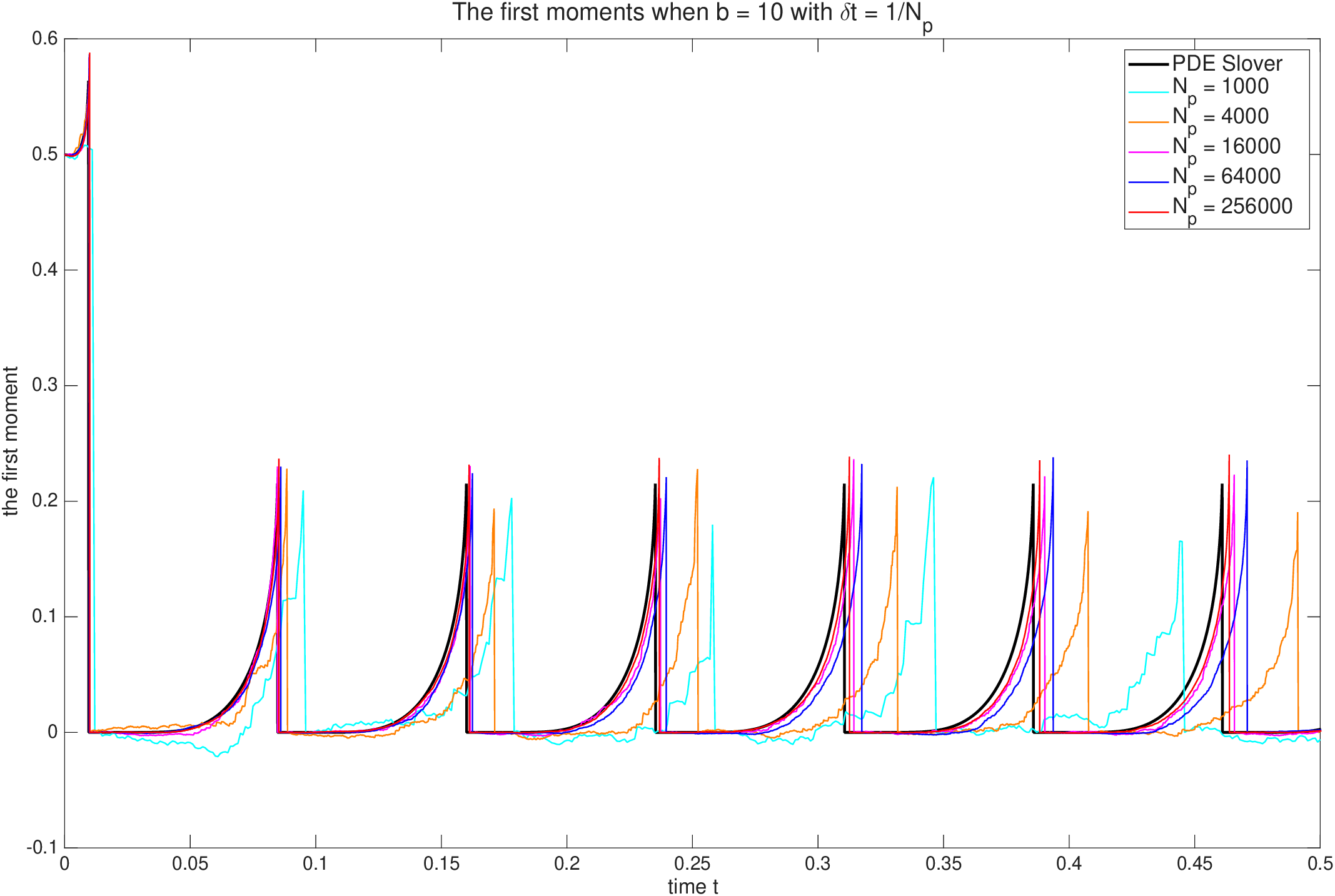}
    \caption{First-order moment comparison for $b = 10$ with standard scaling $\delta t = 1/N_p$. Note the phase drift accumulating over time even for large $N_p$, indicating insufficient temporal resolution in the particle solver.}
    \label{fig:b10_1dN}
\end{figure}

\begin{figure}[H]
    \centering
    \includegraphics[width=0.8\linewidth]{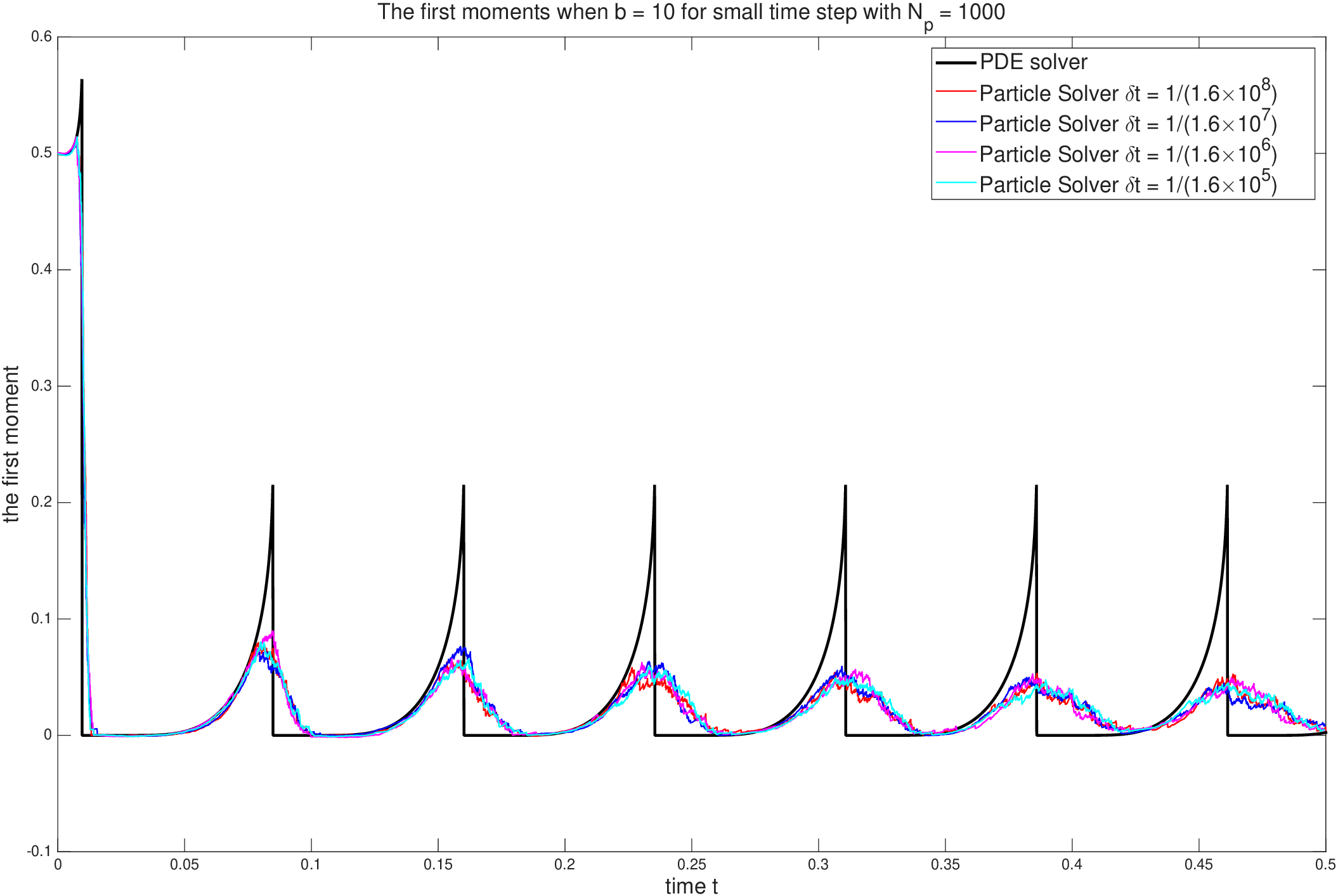}
    \caption{Ensemble-averaged first-order moment for $N_p = 1,000$ ($b=10$). The discrepancy (finite size effect) is minimal during the quiescent periods but is significantly amplified during the synchronization events.}
    \label{fig:b10_1000_100seeds}
\end{figure}

\begin{figure}[H]
    \centering
    \includegraphics[width=0.8\linewidth]{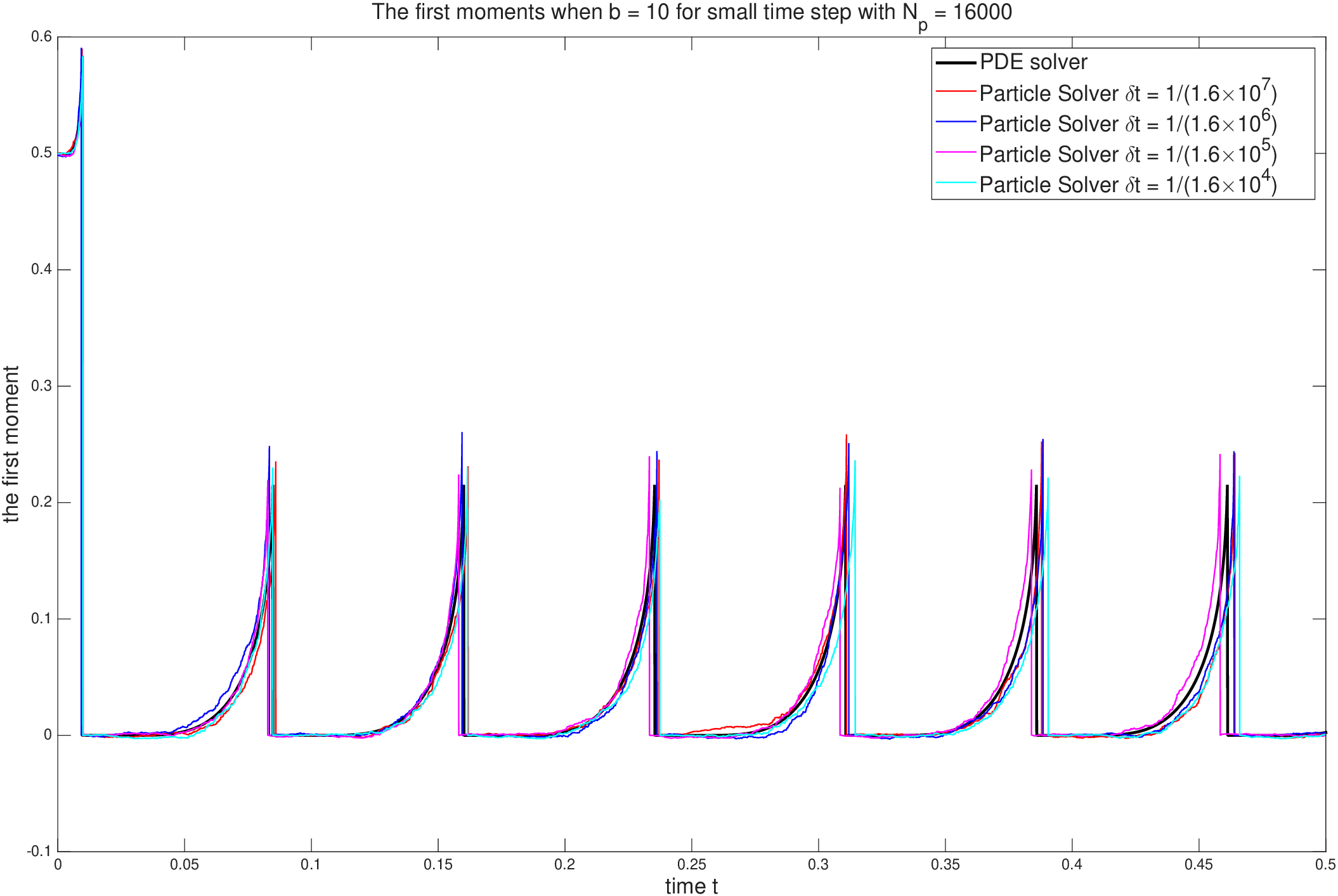}
    \caption{Comparison for $N_p = 16,000$ ($b=10$) with various small time steps. The convergence to the PDE solution improves as the time step decreases, confirming the stiffness of the system.}
    \label{fig:b10_16e3_small_dt}
\end{figure}

\begin{figure}[H]
    \centering
    \includegraphics[width=0.8\linewidth]{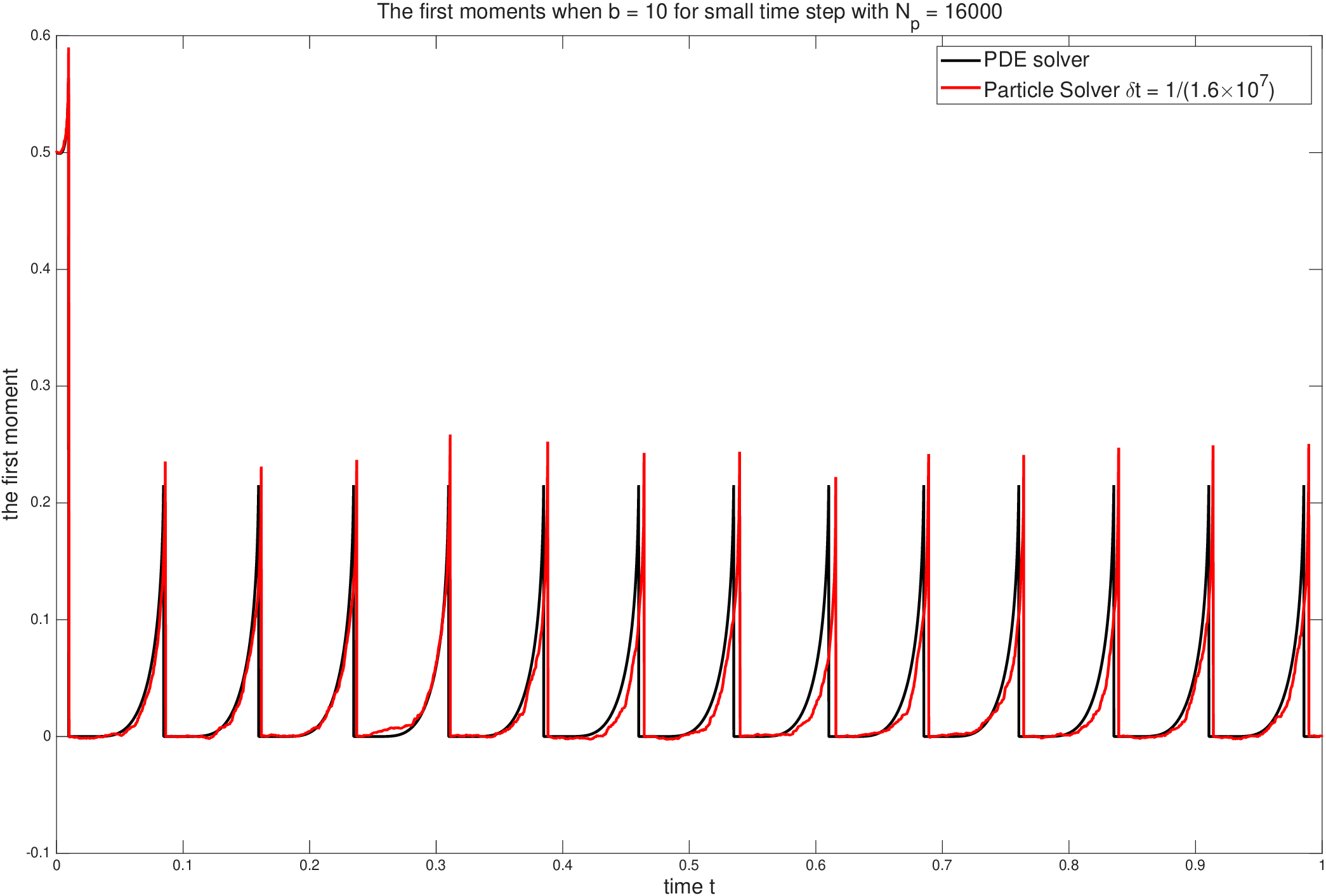}
    \caption{High-precision comparison for $N_p = 16,000$ ($b=10$) with $\delta t = 1/(1.6\times10^7)$. Under sufficiently fine temporal resolution, the particle system perfectly matches the PDE limit. For the particle solver, only the midpoints of the histogram bins are shown, with bin edges spaced at $0.05$.}
    \label{fig:b10_16e3_1_small_dt}
\end{figure}



\begin{table}[!htbp]
    \centering
    \caption{{Comparison of blow-up times ($b=10$) for fixed $N_p = 16,000$ with time steps $\delta t = 1/N_p$. The particle statistics are computed from $100$ independent runs. The average period of the PDE system is $7.52 \times 10^{-2}$, while the average period obtained from the particle system is $7.66 \times 10^{-2}$.}}
    \begin{tabular}{|c|c|c|c|c|c|}
    \hline
    {} &{5th time} &{6th time} & {7th time} & {8th time} & {9th time}\\
    \hline
    {PDE} & {$3.10\times10^{-1}$} & {$3.86\times10^{-1}$} & {$4.61 \times 10^{-1}$} & {$5.36 \times 10^{-1}$} & {$6.11 \times 10^{-1}$} \\ 
    \hline
    {PS mean} & {$3.17\times10^{-1}$} & {$3.94\times10^{-1}$} & {$4.70 \times 10^{-1}$} & {$5.47 \times 10^{-1}$} & {$6.23 \times 10^{-1}$}  \\
    \hline
    {PS variance} & {$8.27\times10^{-6}$} & {$9.71\times10^{-6}$} & {$1.25\times10^{-5}$} & {$1.52\times10^{-5}$} & {$1.73\times10^{-5}$}\\ 
    \hline
    {MSE} & {$5.07\times10^{-5}$} & {$7.26\times10^{-5}$} & {$9.75\times10^{-5}$} & {$1.34\times10^{-4}$} & {$1.63\times10^{-4}$}  \\ 
    \hline 
    \end{tabular}
    \label{tab_time_b10_dt_1dN}
\end{table}



\begin{table}[!htbp]
    \centering
    \caption{{Comparison of blow-up times ($b=10$) for fixed $N_p = 16,000$ with  time steps $\delta t = 1/(1.6\times10^7)$. The particle statistics are computed from $100$ independent runs. The average period of the PDE system is $7.52 \times 10^{-2}$, while the average period obtained from the particle system is $7.55 \times 10^{-2}$.}}
    \begin{tabular}{|c|c|c|c|c|c|}
    \hline
    {} &{5th time} &{6th time} & {7th time} & {8th time} & {9th time}\\
    \hline
    {PDE} & {$3.10\times10^{-1}$} & {$3.86\times10^{-1}$} & {$4.61 \times 10^{-1}$} & {$5.36 \times 10^{-1}$} & {$6.11 \times 10^{-1}$} \\ 
    \hline
    {PS mean} & {$3.12\times10^{-1}$} & {$3.87\times10^{-1}$} & {$4.63 \times 10^{-1}$} & {$5.38 \times 10^{-1}$} & {$6.14 \times 10^{-1}$}  \\
    \hline
    {PS variance} & {$9.55\times10^{-6}$} & {$1.02\times10^{-5}$} & {$1.26\times10^{-5}$} & {$1.26\times10^{-5}$} & {$1.58\times10^{-5}$}\\ 
    \hline
    {MSE} & {$1.08\times10^{-5}$} & {$1.25\times10^{-5}$} & {$1.58\times10^{-5}$} & {$1.73\times10^{-5}$} & {$2.23\times10^{-5}$}  \\ 
    \hline 
    \end{tabular}
    \label{tab_time_b10_smalldt}
\end{table}

\subsubsection{Periodic Synchronization with Moderate Coupling ($b=2$)}\label{subsecb2}

We next examine the case of moderate coupling ($b = 2$). In this regime, the system exhibits periodic synchronization similar to the $b=10$ case, but with significantly wider inter-burst intervals (mean period $\approx 0.21$). While the dynamics appear milder, our analysis reveals that the computational challenges for the microscopic particle simulation---specifically the numerical stiffness and the finite size effect---persist and remain non-negligible.

\paragraph{Phase Drift and Numerical Stiffness}
We first evaluate the performance of the particle solver using the standard time step scaling $\delta t = 1/N_p$. As shown in Figure \ref{fig:b2_1dN} and Table \ref{tab_time_b2_dt_1dN}, despite the lower frequency of blow-ups, the particle simulation suffers from a cumulative phase drift. Even with a large population of $N_p = 256,000$, the predicted blow-up times begin to deviate from the mean-field limit after the second cycle.

This discrepancy highlights the inherent stiffness of the synchronization mechanism. The ``avalanche" of spikes occurs on a timescale much faster than the inter-burst relaxation. Consequently, the standard step $\delta t = 1/N_p$ is insufficient to accurately resolve the precise timing of the cascade, leading to a delay that accumulates over each period. {This trend is also quantified in Table~\ref{tab_time_b2_dt_1dN}. Even though the moderate-coupling regime has a much longer inter-burst interval than the case $b=10$, the particle solver with the standard scaling still exhibits a systematic delay in the predicted blow-up times. The discrepancy again accumulates from one event to the next, showing that the stiffness associated with the cascade mechanism remains present even when the bursts are less frequent.}

To confirm that this is a numerical artifact rather than a model discrepancy, we refine the time step while holding the population size fixed at $N_p = 16,000$. As presented in Table \ref{tab_time_b2_smalldt}, reducing the time step to $\delta t = 1/(1.6\times10^7)$ significantly reduces the discrepancy in blow-up times 
and aligns the mean period closely with the PDE prediction. {A direct comparison between Table~\ref{tab_time_b2_dt_1dN} and Table~\ref{tab_time_b2_smalldt} shows that this phase drift is greatly suppressed once the time step is sufficiently refined. In particular, the event-wise blow-up times become much closer to the PDE values, and the remaining discrepancy stays at a much smaller level over multiple periods. This again demonstrates that, even in the moderate-coupling regime, resolving the fast cascade dynamics requires a time step far smaller than the conventional scaling $\delta t = 1/N_p$.} Figure \ref{fig:b2_16e3_small_dt} further confirms that with sufficiently fine temporal resolution, the particle trajectory recovers the phase coherence of the mean-field limit.

\paragraph{Amplification of Finite Size Effects}
The state-dependence of the finite size effect is clearly observed in the small population regime. Figure \ref{fig:b2_1000_100seeds} displays the ensemble-averaged dynamics {over $100$ runs} for $N_p = 1000$. Similarly to the strong coupling case, the particle model agrees well with the PDE solution during the quiescent phases but exhibits significant deviations near the synchronization events. This confirms that the finite size noise is not additive but is dynamically amplified by the network activity, necessitating much larger population sizes to control the error during blow-ups compared to the steady phases.

\paragraph{Conclusion}
The results for $b=2$ reinforce the conclusion that the computational advantage of the PDE solver is universal across periodic regimes. Even when the synchronization is less frequent, the particle method requires disproportionately expensive time steps ($\sim 10^{-7}$) to prevent phase drift. The PDE solver, by contrast, naturally handles these multiscale dynamics through the time-dilation transformation, providing an efficient and accurate description of the limit cycle.


\begin{figure}[H]
    \centering
    \includegraphics[width=0.8\linewidth]{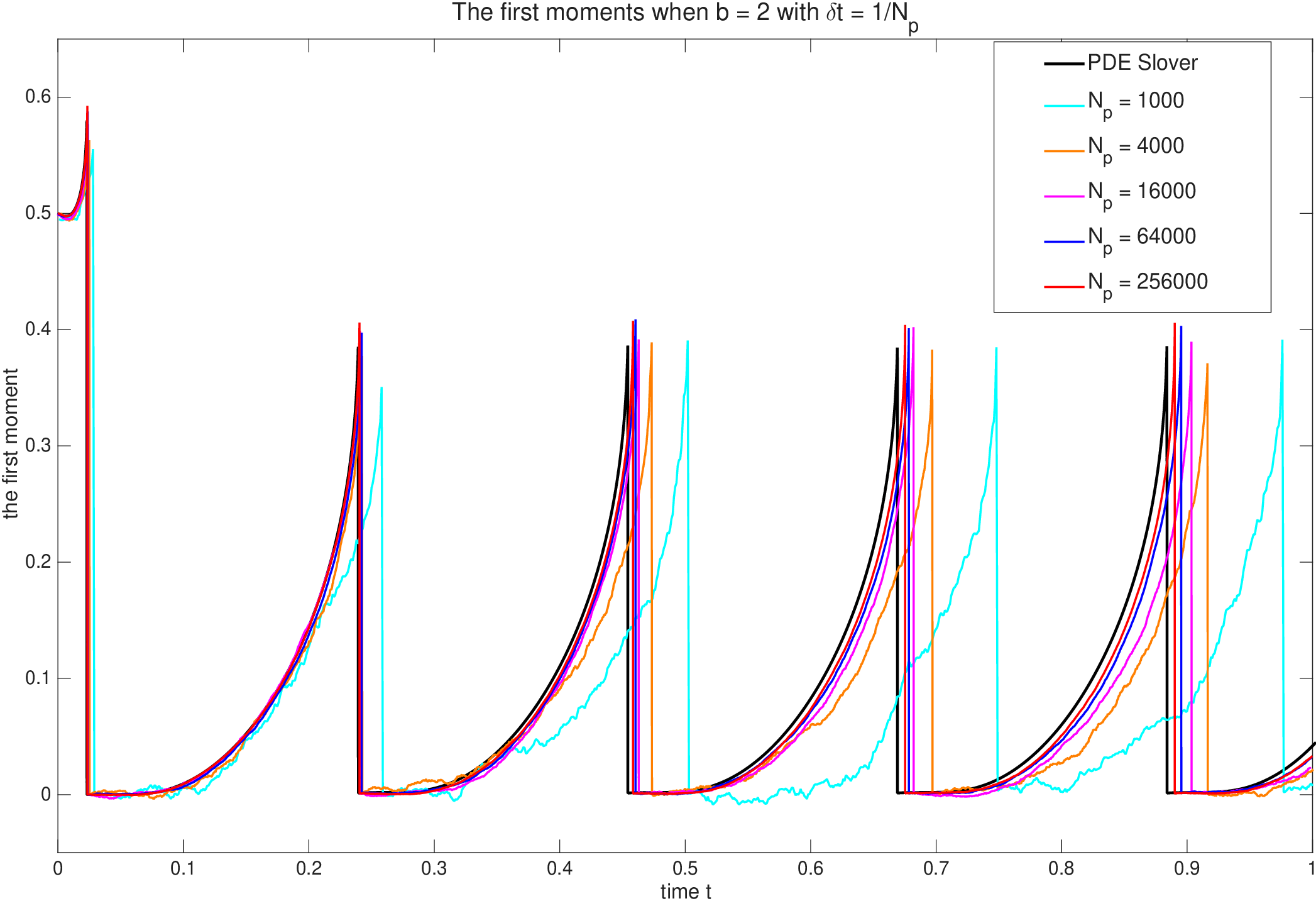}
    \caption{Comparison of first-order moment trajectories for $b = 2$ using standard scaling $\delta t = 1/N_p$. Note that phase drift occurs even for large $N_p$, indicating that the standard time step is insufficient to resolve the stiffness of the blow-up events.}
    \label{fig:b2_1dN}
\end{figure}

\begin{figure}[H]
    \centering
    \includegraphics[width=0.8\linewidth]{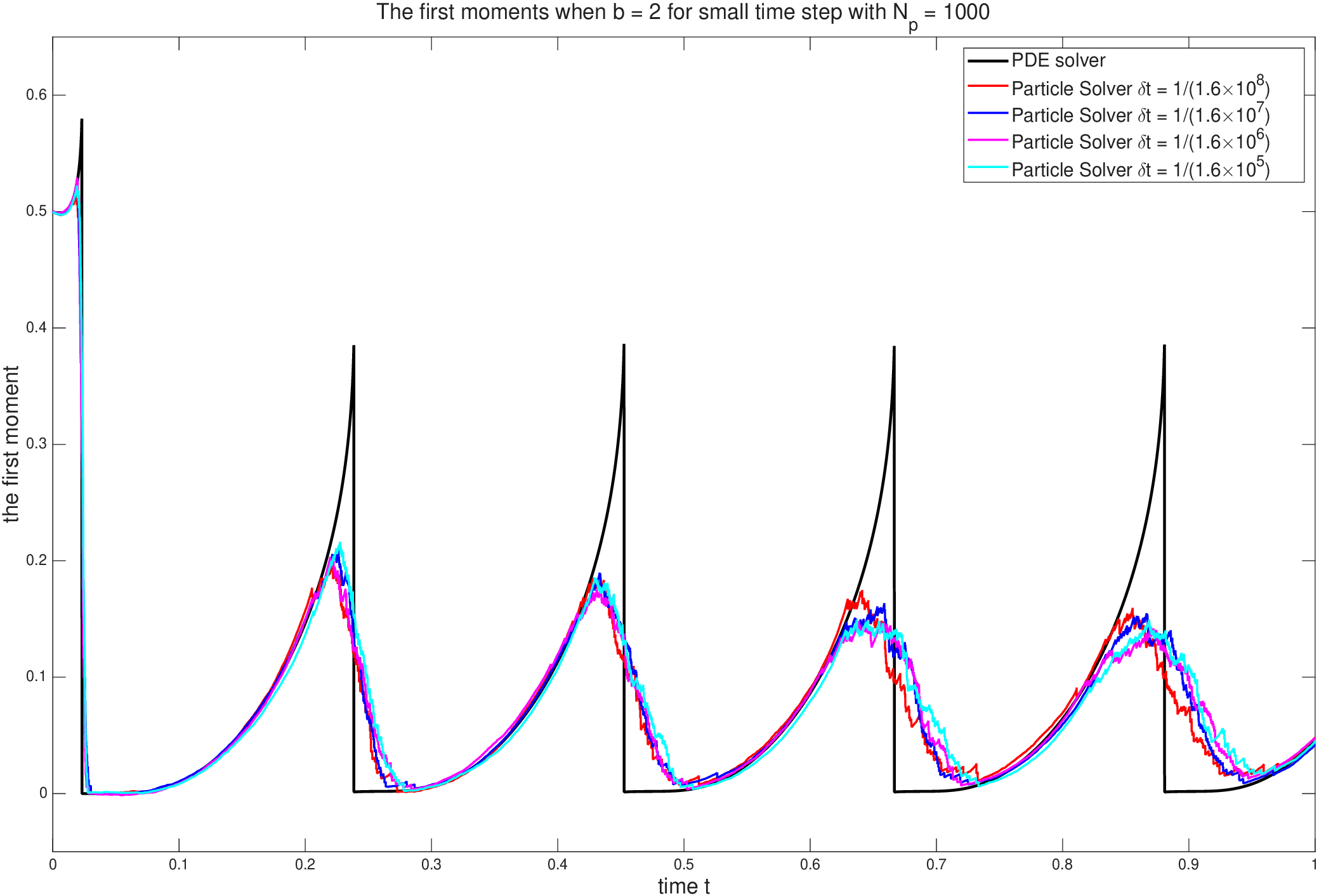}
    \caption{Ensemble-averaged first-order moment for $N_p = 1000$ ($b=2$). The finite size fluctuations are state-dependent, remaining small during the relaxation phase but expanding significantly during synchronization.}
    \label{fig:b2_1000_100seeds}
\end{figure}

\begin{figure}[H]
     \centering
    \includegraphics[width=0.8\linewidth]{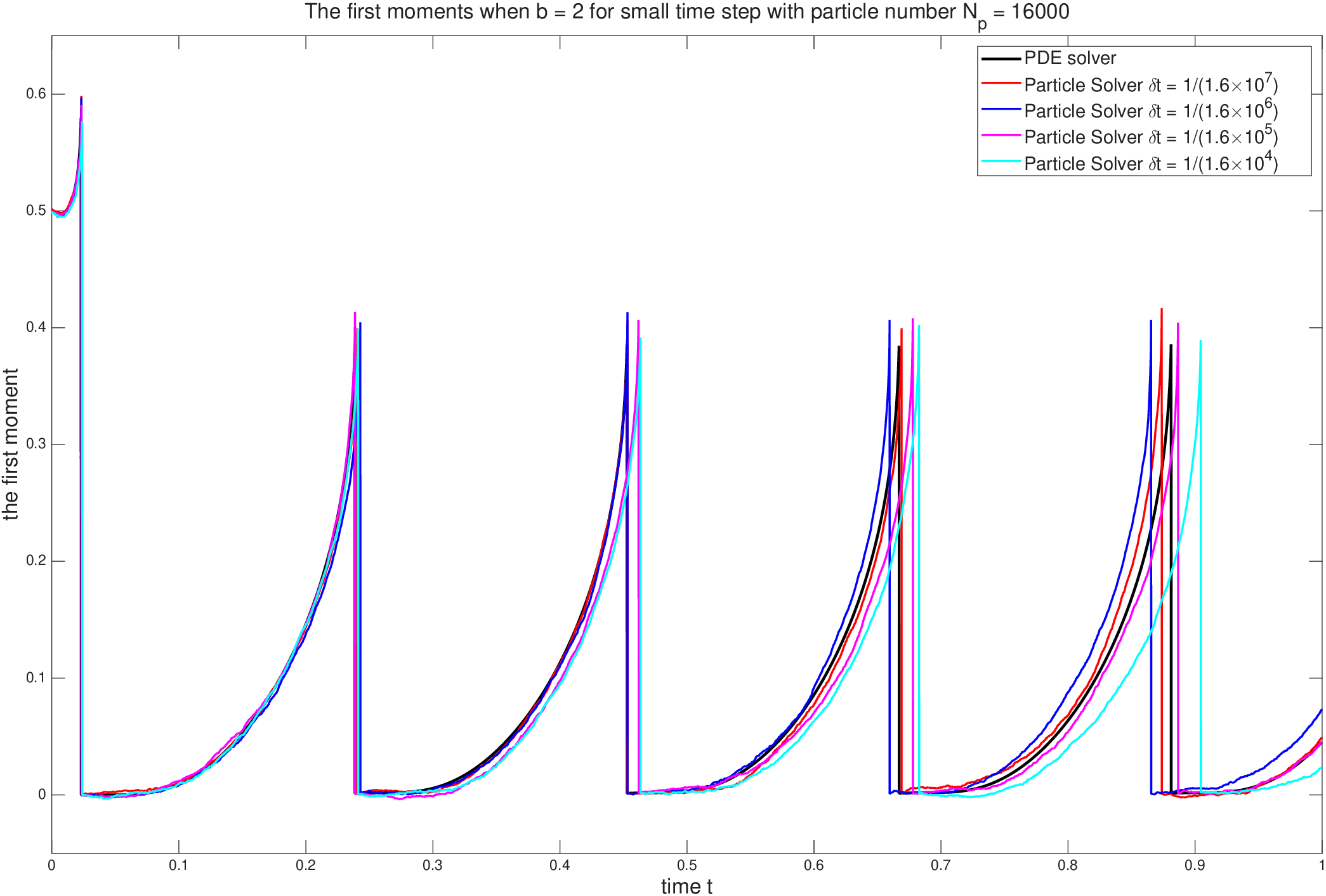}
     \caption{Comparison for $N_p = 16,000$ ($b = 2$) with extremely small time steps. With sufficient temporal resolution to overcome stiffness, the particle model faithfully tracks the mean-field PDE solution.}
     \label{fig:b2_16e3_small_dt}
\end{figure}



\begin{table}[!htbp]
    \centering
    \caption{{Comparison of blow-up times ($b=2$) for fixed $N_p = 16,000$ with time steps $\delta t = 1/N_p$. The particle statistics are computed from $100$ independent runs. The average period of the PDE system is $2.14 \times 10^{-1}$, while the average period obtained from the particle system is $2.20 \times 10^{-1}$.}}
    \begin{tabular}{|c|c|c|c|c|c|}
    \hline
    {} &{5th time} &{6th time} & {7th time} & {8th time} & {9th time}\\
    \hline
    {PDE}  & {$8.81\times10^{-1}$} & {$1.10$} & {$1.31$} & {$1.52$} & {$1.74$} \\ 
    \hline
    {PS mean} & {$9.04\times10^{-1}$} & {$1.12$} & {$1.34$} & {$1.56$} & {$1.78$}  \\
    \hline
    {PS variance} & {$5.78\times10^{-5}$} & {$6.84\times10^{-5}$} & {$8.27\times10^{-5}$} & {$9.82\times10^{-5}$} & {$1.22\times10^{-4}$}\\ 
    \hline
    {MSE} & {$5.57\times10^{-4}$} & {$8.28\times10^{-4}$} & {$1.14\times10^{-3}$} & {$1.56\times10^{-3}$} & {$2.02\times10^{-3}$}  \\ 
    \hline 
    \end{tabular}
    \label{tab_time_b2_dt_1dN}
\end{table}
\begin{table}[!htbp]
    \centering
    \caption{{Comparison of blow-up times ($b=2$) for fixed $N_p = 16,000$ with time steps $\delta t = 1/(1.6\times10^7)$. The particle statistics are computed from $100$ independent runs. The average period of the PDE system is $2.14 \times 10^{-1}$, while the average period obtained from the particle system is $2.15 \times 10^{-1}$.}}
    \begin{tabular}{|c|c|c|c|c|c|}
    \hline
    {}  &{5th time} & {6th time} & {7th time} & {8th time} &{9th time}\\
    \hline
    {PDE}  & {$8.81\times10^{-1}$} & {$1.10$} & {$1.31$} & {$1.52$} & {$1.74$}\\ 
    \hline
    {PS mean}  & {$8.86\times10^{-1}$} & {$1.11$} & {$1.32$} & {$1.53$} & {$1.75$} \\
    \hline
    {PS variance}  & {$6.86\times10^{-5}$} & {$6.80\times10^{-5}$} & {$8.10\times10^{-5}$} & {$1.15\times10^{-4}$} & {$1.57\times10^{-4}$}\\ 
    \hline
    {MSE}  & {$8.77\times10^{-5}$} & {$9.21\times10^{-5}$} & {$1.16\times10^{-4}$} & {$1.67\times10^{-4}$} & {$2.06\times10^{-4}$}  \\ 
    \hline 
    \end{tabular}
    \label{tab_time_b2_smalldt}
\end{table}

\subsubsection{Critical Regime: Single Blow-up ($b = 1$)}

Finally, we investigate the critical regime ($b = 1$), where the system, under the chosen initial conditions, sits precisely on the threshold of a single synchronization event. This regime imposes the most stringent test on the solvers, as the dynamics are highly sensitive to both numerical precision and finite-size fluctuations. The PDE solver predicts a sharp single blow-up at $t \approx 0.07$.

\paragraph{Failure of Standard Particle Simulations}
We first attempt to reproduce this event using the particle solver with the standard scaling $\delta t = 1/N_p$. As shown in Figure \ref{fig:b1_dt_1dN} and Table \ref{tab_b1_dt_1dN}, the particle simulation fails to undergo synchronization entirely, even when the population size is increased to $N_p = 256,000$. The computed first-order moment remains smooth, completely missing the sharp transition predicted by the mean-field limit. This suggests that near the critical threshold, standard Monte Carlo methods are incapable of capturing the rare collective event due to insufficient temporal resolution or noise interference.

\paragraph{The Finite Size Barrier}
To determine if this failure is physical (due to system size) or numerical (due to time step), we first examine a small system ($N_p = 1000$) with extremely fine time steps. Figure \ref{fig:b1_1000_100seeds} displays the ensemble-averaged trajectory over 100 runs. Even with $\delta t$ as small as $10^{-8}$, the small system does not synchronize. This indicates a dominant {finite size effect}: for small populations, the intrinsic stochastic fluctuations prevent the system from crossing the critical synchronization threshold, resulting in qualitatively different physics compared to the mean-field limit.

\paragraph{Extreme Stiffness and Computational Cost}
We then test a larger system ($N_p = 16,000$) where the mean-field approximation should ideally hold. Figure \ref{fig:b1_16000_small_dt} reveals the extreme numerical stiffness of the problem. With time steps of $10^{-5}$ or even $10^{-7}$, the particle solver still misses the blow-up. It is only when the time step is refined to an extreme {$\delta t = 1/(1.6\times10^8)$}  that the particle system finally reproduces the single blow-up event.

\paragraph{Conclusion}
This case explicitly demonstrates the superiority of the PDE solver in critical regimes. While the particle method requires a ``perfect storm" of large $N_p$ and infinitesimal $\delta t$ (incurring a massive computational cost) to trigger the correct dynamics, the PDE solver captures the critical blow-up naturally and efficiently.


\begin{figure}[H]
    \centering
    \includegraphics[width=0.8\linewidth]{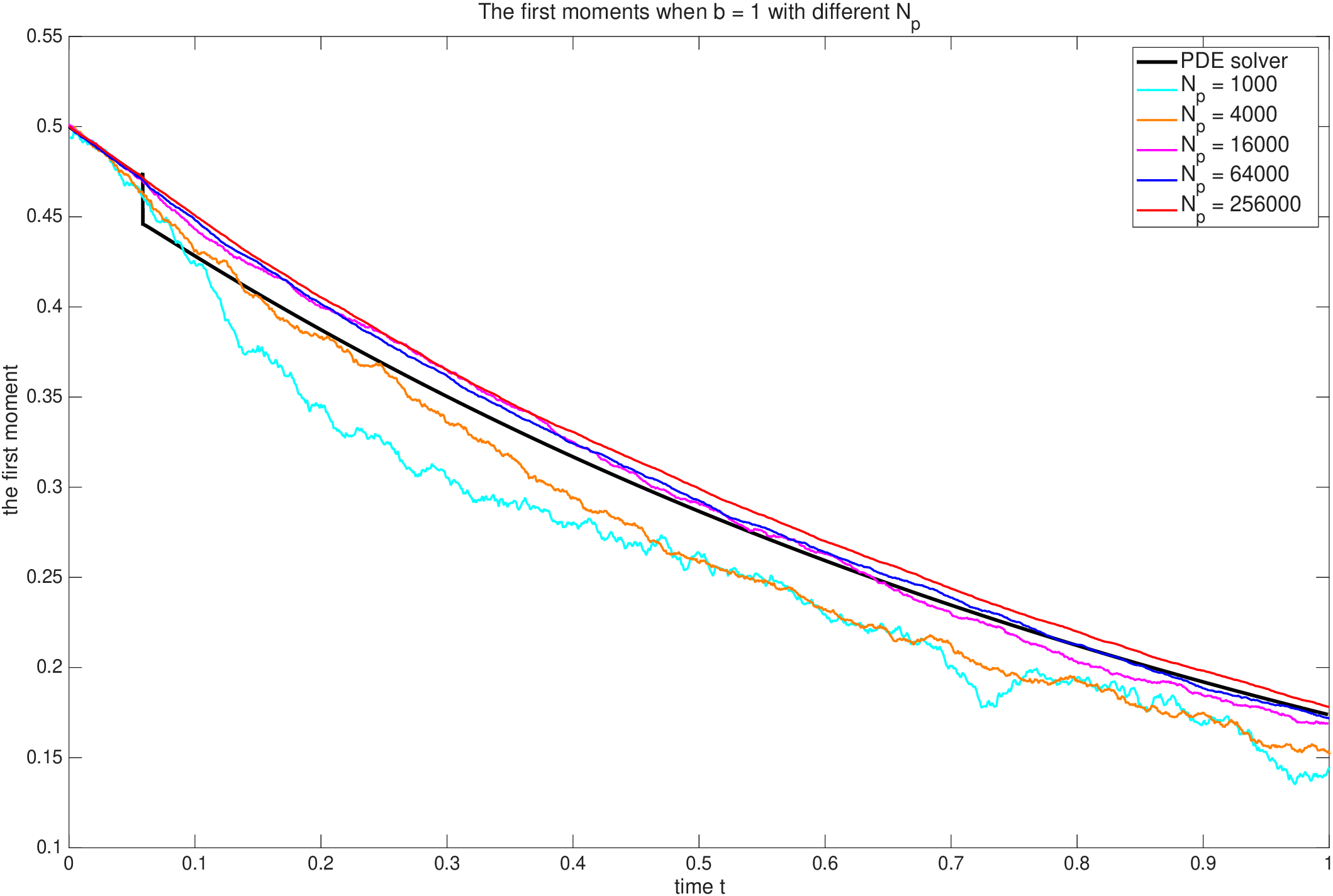}
    \caption{Comparison for the critical case $b = 1$ using standard scaling $\delta t = 1/N_p$. The particle solver (colored lines) completely misses the single blow-up event predicted by the PDE solver (black dashed line), even for $N_p = 256,000$.}
    \label{fig:b1_dt_1dN}
\end{figure}

\begin{table}[htbp]
    \centering
    \caption{$L_{\rm{error}}$ for $b = 1$ with varying $N_p$ and $\delta t = 1/N_p$. The particle statistics are computed from $100$ independent runs. The large errors reflect the qualitative failure of the particle solver to capture the synchronization event.}
    \begin{tabular}{|c|c|c|c|c|c|}
    \hline
    {$N_p$} & {$1000$} & {$4000$} & {$16,000$} & {$64,000$} & {$256,000$} \\ 
    \hline
    {$L_{\rm{error}}$ mean} & {$4.13\times10^{-2}$} & {$1.86\times10^{-2}$} & {$7.59\times10^{-3}$} & {$6.70\times10^{-3}$} & {$1.05\times10^{-2}$} \\
    \hline
    {$L_{\rm{error}}$ var} & {$3.67\times10^{-4}$} & {$5.66\times10^{-5}$} & {$7.37\times10^{-6}$} & {$2.03\times10^{-6}$} & {$1.22\times10^{-6}$} \\
    \hline
    \end{tabular}
    \label{tab_b1_dt_1dN}
\end{table}

\begin{figure}[H]
    \centering
    \includegraphics[width=0.8\linewidth]{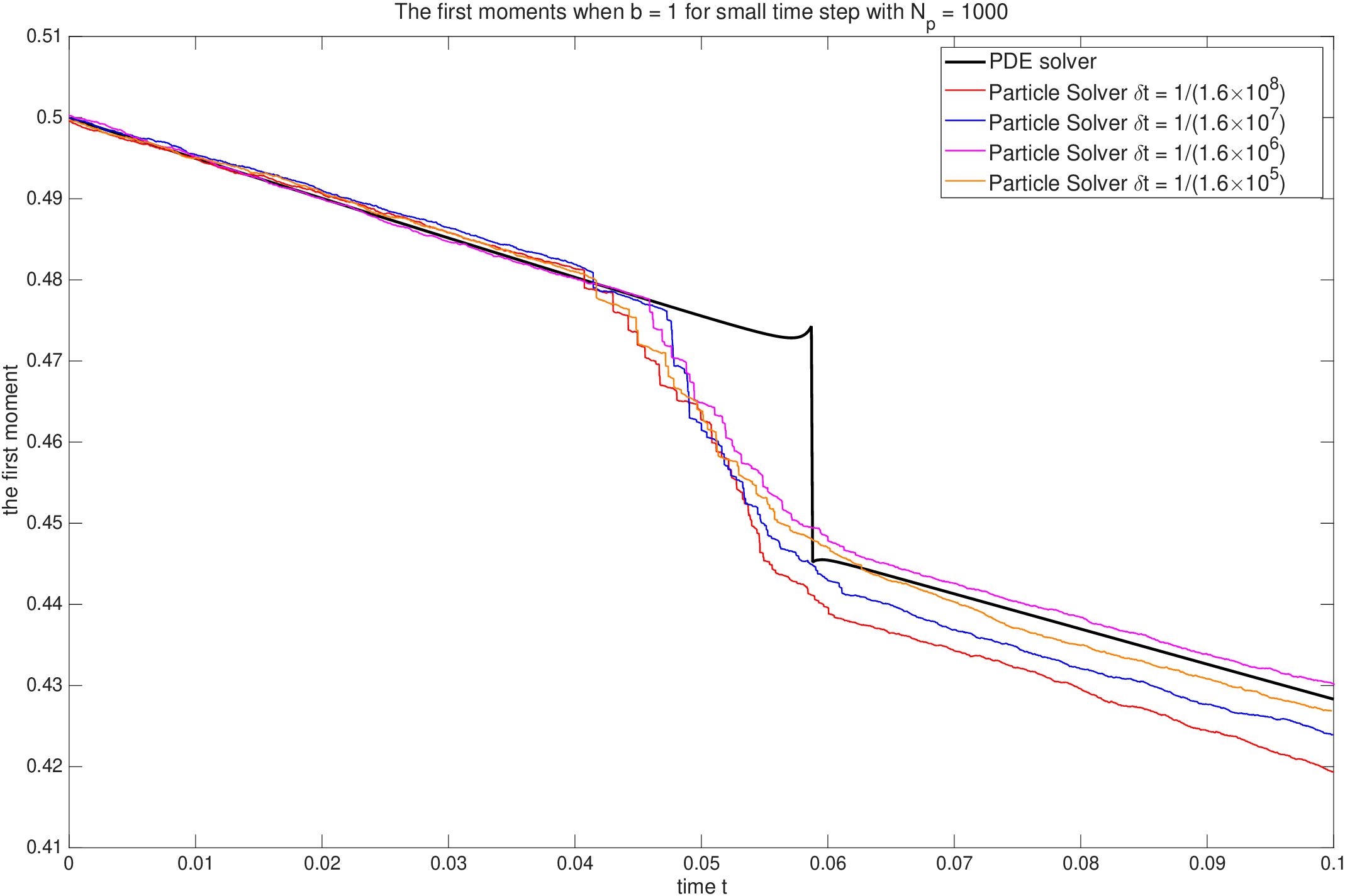}
    \caption{Ensemble-averaged first-order moment for $N_p = 1000$ ($b = 1$) with extremely small time steps. The system fails to synchronize, indicating a physical barrier imposed by the finite size effect.}
    \label{fig:b1_1000_100seeds}
\end{figure}

\begin{figure}[H]
    \centering
    \includegraphics[width=0.8\linewidth]{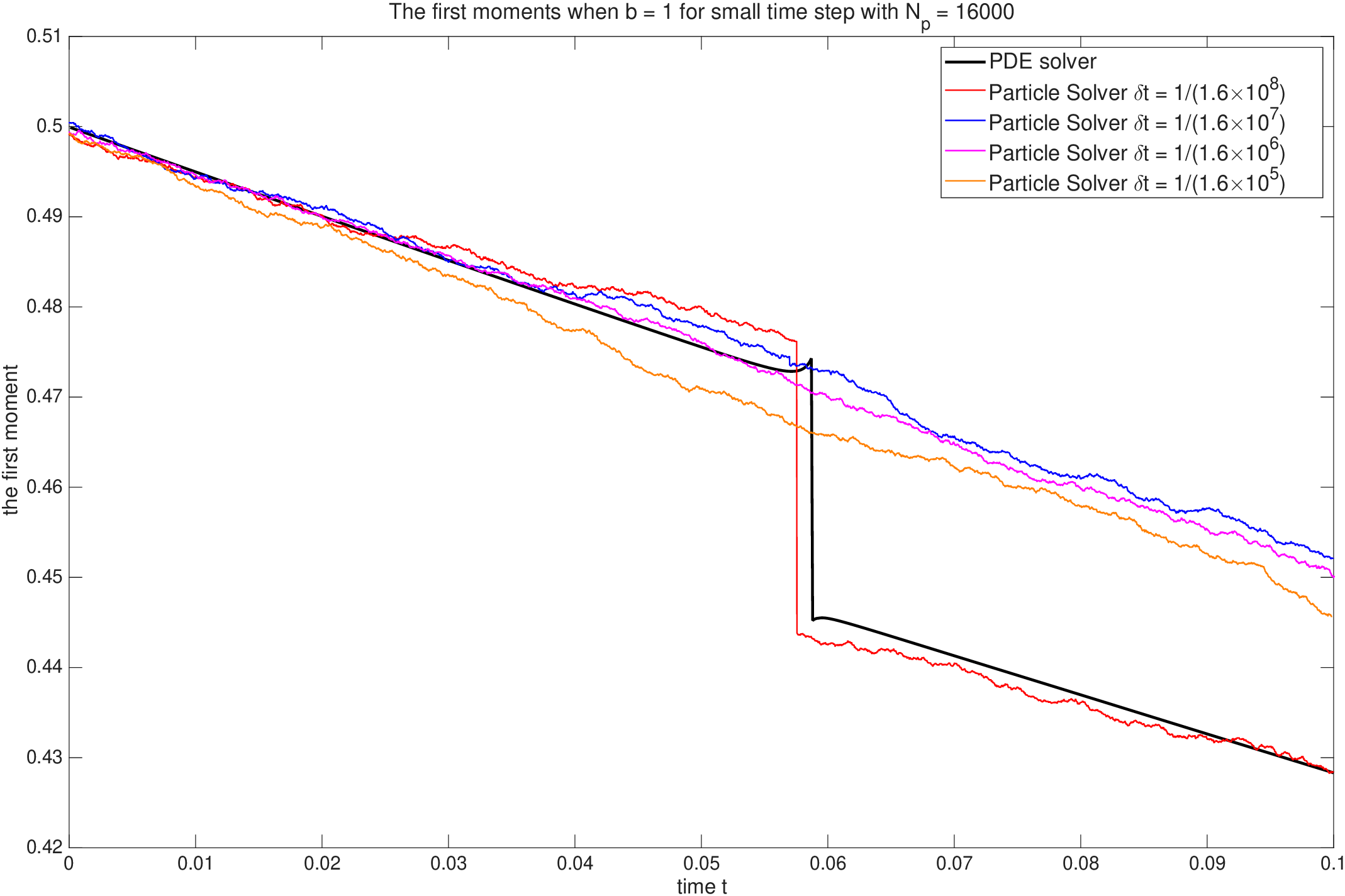}
    \caption{Comparison for $N_p = 16,000$ ($b = 1$) with decreasing time steps. The particle system captures the blow-up only at the extreme time step of $\delta t = 1/(1.6\times10^8)$, highlighting the severe numerical stiffness.}
    \label{fig:b1_16000_small_dt}
\end{figure}

\subsection{Discussion: Computational Advantages and Finite-Size Limitations}

The numerical experiments presented in this section serve a dual purpose: validating the proposed multiscale PDE algorithm and exploring the physical relationship between the microscopic particle model and its macroscopic mean-field limit. Here, we summarize the capabilities of our solver and delineate the regimes where the mean-field approximation is most effective versus where finite-size effects dominate.

\paragraph{Capabilities of the Time-Dilation Solver}
The primary achievement of this work is the robust resolution of finite-time blow-ups. Our results confirm that the time-dilation framework successfully transforms the singular firing rate $N(t)$ into a regularized auxiliary variable $M(\tau)$. This allows the solver to:
\begin{enumerate}
    \item \textbf{Seamlessly Traverse Singularities:} The solver automatically detects the onset of synchronization and switches timescales, calculating the blow-up size $\Delta m$ and the post-blowup distribution without manual intervention or heuristic regularization.
    \item \textbf{Capture Complex Dynamics:} From steady states ($b=1/2$) to high-frequency periodic oscillations ($b=10$), the solver maintains stability and mass conservation, reproducing the full spectrum of theoretical solutions.
\end{enumerate}

\paragraph{Computational Efficiency and Stiffness}
When regarding the PDE solver as a proxy for large-scale neuronal networks ($N_p \gg 1$), our comparisons highlight a decisive computational advantage. The microscopic dynamics during a synchronization event are extremely \textit{stiff}, requiring particle simulations to employ infinitesimal time steps (down to $\delta t \sim 10^{-8}$) to resolve the fast cascade. 
The PDE solver eliminates this stiffness through the change of variables $d\tau = N(t)dt$. Consequently, it can produce noise-free, highly accurate descriptions of the collective dynamics using standard grid sizes. This makes the PDE approach orders of magnitude more efficient for studying the phase diagrams and synchronization properties of large networks.

\paragraph{Finite-Size Effects and Physical Discrepancies}
However, it is crucial to acknowledge that the mean-field PDE describes the limit $N_p \to \infty$. Our experiments with the particle system reveal significant \textit{finite-size effects} that distinguish the microscopic reality from the macroscopic limit:
\begin{itemize}
    \item \textbf{State-Dependent Fluctuations:} The deviation between the particle system and the PDE is not uniform. Fluctuations are minimal during quiescent periods but are dynamically amplified during synchronization events.
    \item \textbf{Critical Barriers:} In critical regimes (e.g., $b=1$), the finite system size imposes a physical barrier. While the mean-field limit predicts a deterministic blow-up, small particle populations ($N_p=1000$) may fail to synchronize due to intrinsic stochasticity. In such cases, the PDE solver accurately describes the \textit{infinite-population potential} for synchronization, but the realization of this potential in a finite network depends on the specific population size. We note that the slow convergence of the mean-field limit when $b=1$ is also reported in \cite{sadun2022global} for a related model, under the term ``critical slowing down''.
\end{itemize}

In conclusion, the proposed PDE solver is a powerful tool for investigating the ideal collective behavior of integrate-and-fire networks, offering a precise baseline against which finite-size effects and stochastic phenomena can be measured.



\section{Conclusion}

In this paper, we have presented a robust multiscale numerical framework for simulating multiple firing events in the mean-field Integrate-and-Fire neuronal network model. By introducing a {time-dilated coordinate} proportional to the global firing activity, we successfully desingularized the finite-time blow-up inherent to the Fokker-Planck equation with strong excitatory feedback. This transformation allows us to resolve the instantaneous synchronization event as a continuous mesoscopic process, overcoming the severe time-step stiffness that plagues traditional particle-based simulations.

A key feature of our approach is the {hybrid numerical strategy}, which combines a standard finite-volume scheme for smooth evolution with a specialized treatment for the blow-up regime. The introduction of a mesh-independent flux criterion ensures that the solver switches timescales only when physically necessary, while the semi-analytical ``moving Gaussian'' method accurately captures the post-blowup distribution without numerical diffusion. Crucially, we demonstrated that this numerical regularization is physically consistent with the microscopic {avalanche mechanism}, providing a macroscopic description that respects the system's underlying fragility and conservation laws.

Several promising directions for future research emerge from this work. First, extending this framework to {Excitatory-Inhibitory (E-I) networks} would allow for the exploration of more complex rhythms, such as the interplay between synchronization and inhibition-induced oscillations. Second, incorporating {synaptic delays} into the mean-field model presents an interesting challenge, as delays introduce history dependence that may interact non-trivially with the synchronous dynamics. Finally, from a theoretical perspective, establishing a rigorous error analysis and convergence rate for the hybrid time-dilation scheme remains an open mathematical problem that warrants further investigation. {Note that even at the fully continuous level, the global existence of the generalized solution here is open, as previous results do not directly apply due to the infinitesimal refractory period.}

\section*{Acknowledgments}

ZZ is supported by the National Key R\&D Program of China, Project Number 2021YFA1001200, Zhejiang Provincial Natural Science Foundation of China, Project Number QKWL25A0501, and Fundamental and Interdisciplinary Disciplines Breakthrough Plan of the Ministry of Education of China, Project Number JYB2025XDXM502. LT is supported in part by the Brain Science and Brain-like Intelligence Technology Innovation STI2030 -- National Science and Technology Major Project No. 2022ZD0204600. XD and ZZ thank Jian-Guo Liu and Jos\'{e} A. Carrillo for helpful discussions. XD thanks Yangfan Luo for perspectives on convection-diffusion problems.

\bibliography{mfe.bib}
\bibliographystyle{plain}

\end{document}